\newtheorem{theorem}{Theorem}
\newtheorem{lemma}{Lemma}
\newtheorem{prop}{Proposition}
\newtheorem{cor}{Corollary}
\newtheorem{definition}{Definition}
\newtheorem{rmk}{Remark}
\newenvironment{proof}{\medskip \noindent
{\bf Proof.}}{\hfill \rule{.5em}{1em}
\\}
\newenvironment{proof1}{\medskip \noindent
{\bf Proof of Theorem 1}}{\hfill \rule{.5em}{1em}
\\}
\newenvironment{proof3}{\medskip \noindent
{\bf Proof of Theorem 3}}{\hfill \rule{.5em}{1em}
\\}
\newcommand\blfootnote[1]{%
  \begingroup
  \renewcommand\thefootnote{}\footnote{#1}%
  \addtocounter{footnote}{-1}%
  \endgroup
}
\def\bea{\begin{eqnarray*}}
\def\eea{\end{eqnarray*}}
\def\be{\begin{equation}}
\def\ee{\end{equation}}
\begin{document}

\title{On the classification of 4-dimensional $(m,\rho)$-quasi-Einstein manifolds with harmonic Weyl curvature}

\date{}

\author{ Jinwoo Shin }

\maketitle

\begin{abstract}
  In this paper we study 4-dimensional $(m,\rho)$-quasi-Einstein manifolds with harmonic Weyl curvature when $m\notin\{0,\pm1,-2,\pm\infty\}$ and $\rho\notin\{\frac{1}{4},\frac{1}{6}\}$. We prove that
a non-trivial $(m,\rho)$-quasi-Einstein metric $g$ (not necessarily complete) is locally isometric to one of the followings: (i) $\mathcal{B}^2_\frac{R}{2(m+2)}\times \mathbb{N}^2_\frac{R(m+1)}{2(m+2)}$ where $\mathcal{B}^2_\frac{R}{2(m+2)}$ is a northern hemisphere in the 2-dimensional sphere $\mathbb{S}^2_\frac{R}{2(m+2)}$, $\mathbb{N}_\delta$ is the
2-dimensional Riemannian manifold with constant curvature $\delta$ and $R$ is the constant scalar curvature of $g$, (ii)  $\mathcal{D}^2_\frac{R}{2(m+2)}\times\mathbb{N}^2_\frac{R(m+1)}{2(m+2)}$ where $\mathcal{D}^2_\frac{R}{2(m+2)}$ is
 one half (cut by a hyperbolic line) of the hyperbolic plane $\mathbb{H}^2_\frac{R}{2(m+2)}$, (iii) $\mathbb{H}^2_\frac{R}{2(m+2)}\times\mathbb{N}^2_\frac{R(m+1)}{2(m+2)}$ , (iv) a certain singular metric with $\rho=0$, (vi) a locally conformally flat metric. By applying this local classification, we obtain a classification of complete $(m,\rho)$-quasi-Einstein manifolds under the harmonic Weyl curvature condition. Our result can be viewed as a local classification of gradient Einstein-type manifolds. 

One corollary of our result is the classification of $(\lambda,4+m)$-Einstein manifolds which can be viewed as $(m,0)$-quasi-Einstein manifolds.
\end{abstract}

\blfootnote{
\textit{Date:} \today.

2010 \textit{Mathematics Subject Classification}. 53C21, 53C25.

\textit{Key words and phrases}. quasi-Einstein manifold, warped product, harmonic Weyl curvature, Codazzi tensor

This work was supported by the National Research Foundation of Korea(NRF) }

\section{Introduction}
\quad A Riemannian manifold $(M^n,g)$ is called a \textit{quasi-Einstein manifold}, if there exist a smooth function $f$ on $(M^n,g)$  and
two real constants $\mu,\lambda$ such that
\begin{equation}
Rc+\nabla df -\mu df\otimes df=\lambda g \label{quasidef}
\end{equation}
where $Rc$ denotes the Ricci tensor of $(M^n,g)$. One can easily see that if $f=const$ (we call this {\em trivial}) then this is nothing but an Einstein manifold and if $\mu=0$, this is a gradient Ricci soliton. Quasi-Einstein manifolds are especially interesting in that when $\mu=\frac{1}{m}$ for a positive integer $m$. In this case, if we take  $w=e^{-\frac{f}{m}}$
in (\ref{quasidef}), then  we obtain the following equations which is called the $(\lambda,n+m)$-Einstein manifold equation \cite{HPW}

\begin{equation}
  \nabla dw=\frac{w}{m}(Rc-\lambda g). \label{hpwdef}
\end{equation}
These manifolds are answers for one of the questions in the A.Besse's Book \cite[p. 265]{Be}. The question is whether one can construct Einstein metrics
which are warped products with a nonconstant warping function.
 When $m>1$, the $(\lambda,n+m)$-Einstein metric is exactly the base of an $n+m$ dimensional Einstein warped product,
i.e., $(E=M\times F^m,g_E=g+w^2 g_F)$ where $F$ is an $m$-dimensional Einstein manifold.

\bigskip

There are a number of remarkable studies of quasi-Einstein manifolds under various curvature conditions. It is known that according to G.Catino, C.Mantegazza, L.Mazzieri and M.Rimoldi \cite{CMMR},
a complete locally conformally flat quasi-Einstein manifold of dimension $n\geq3$ is locally a warped product with $(n-1)$-dimensional fibers of constant sectional curvature around any regular point of $f$ when
$\mu\neq\frac{1}{2-n}$. A complete simply connected $(\lambda,n+m)$-Einstein metric $(m>1)$ with harmonic Weyl tensor and the condition $W(\nabla w, \cdot,\cdot,\nabla w)=0$ is of the form $g=dt^2+\phi^2(t)g_L$, where
$g_L$ is an Einstein metric \cite{HPW}.  G.Catino \cite{Ca2} showed that a complete four-dimensional half conformally flat quasi-Einstein manifold with $\mu\neq -\frac{1}{2}$ is either Einstein or locally conformally flat. And Q.Chen and C.He \cite{CH} showed that a compact warped product Einstein manifold with vanishing Bach tensor of dimension $n\geq 4$ is either Einstein or a finite quotient of a warped product with an $(n-1)$-dimensional Einstein fiber.
\bigskip

Recently, G.Catino \cite{Ca} introduced a {\em generalized quasi-Einstein manifold}. He considered a manifold $(M^n,g)$,$n\geq 3$ satisfying
(\ref{quasidef}) for smooth functions $\mu$ and $\lambda$. G. Catino \cite{Ca} showed that a generalized quasi-Einstein manifold with harmonic Weyl tensor and $W(\nabla f,\cdot
,\cdot,\cdot)=0$ is locally a warped product with $(n-1)$-dimensional Einstein fibers around any regular point of $f$.

In this paper, we study {\em $(m,\rho)$-quasi-Einstein manifolds} which is a special case of generalized quasi-Einstein manifolds.
\begin{definition}\cite{GH}
  If there exist a smooth function $f$ on $(M^n,g)$ and three real constants $m,\rho,\lambda$ ($m\notin\{0,\pm\infty\})$ such that
\begin{equation}
  Rc+\nabla df-\frac{1}{m}df\otimes df=(\rho R+\lambda)g \label{rhodef}
\end{equation}
then we call $(M^n,g)$ a $(m,\rho)$-quasi-Einstein manifold, where $Rc$ and $R$ denote the Ricci curvature and the scalar curvature of $(M^n,g)$.
\end{definition}

G.Huang and Y.Wei \cite{GH} obtained some results on compact $(m,\rho)$-quasi-Einstein manifolds under the Bach flat condition.
Note that we allow $m$ to be negative but restrict to finite.
It is known that if $m=2-n$, then $(M^n,g,f)$ is conformal to an Einstein metric \cite{JW}. Thus in this paper we only consider the case $m\neq 2-n$.
Then we can regard our space as a {\em nondegenerate gradient Einstein-type manifold} \cite{CP} which
is a Riemannian manifold $(M,g,f)$ satisfying $\alpha Rc+\beta \nabla df+\mu df\otimes df=(\rho R+\lambda)$ for some $\alpha,\beta,\mu,\rho\in \mathbb{R}$ such that $\beta\neq0,\beta^2\neq(n-2) \alpha \mu$. 

\bigskip

The aim of this paper is to classify 4-dimensional $(m,\rho)$-quasi-Einstein manifolds under the weaker condition than the above work in \cite{Ca}. We only assume the harmonic Weyl curvature condition,
without the zero radial condition $W(\nabla f, \cdot,\cdot, \cdot)=0$. In this paper we do not assume that $M$ is complete. We will mainly consider this problem in local sense. The proof is motivated by J.Kim's \cite{JS} paper which is the corresponding result on gradient Ricci solitons.
\bigskip

  The following is the main theorem of this paper.
\begin{theorem}\label{mainthm}
  Let $(M^4,g,f)$ be a $(m,\rho)$-quasi-Einstein manifold (not necessarily complete) with harmonic Weyl curvature, $m\notin\{\pm1,-2\}$ and $\rho\notin\{\frac{1}{4},\frac{1}{6}\}$. Then there exists an open dense subset $U$ of $M$ such that
for each point $p$ in $U$ there exists a neighborhood $V_p$  where $(V_p,g)$ is isometric to one of the following;
 Here $R$ is the constant scalar curvature of $g$ and $C$ is an arbitrary constant.

{\rm (i)} An Einstein metric with $f$ a constant function.

 {\rm (ii)} A domain in $\mathcal{B}_{\frac{R}{2(m+2)}}^2 \times\mathbb{N}_{\frac{R(m+1)}{2(m+2)}}^2$ with $f=-m\ln(\cos\sqrt{\frac{R}{2(m+2)}}s)+C$ and $\frac{R}{2(m+2)}>0$
where  $\mathcal{B}_{\frac{R}{2(m+2)}}^2$ is the  northern hemisphere in the 2-dimensional sphere $\mathbb{S}_{\frac{R}{2(m+2)}}^2$, $\mathbb{N}^2_\delta$ is a 2-dimensional
Riemannian manifold with constant curvature $\delta$ and
 $s\in(0,\frac{\pi}{2})$ is the distance function on $\mathcal{B}^2_\frac{R}{2(m+2)}$ from the the north pole.

 {\rm (iii)}A domain in $\mathcal{D}_{\frac{R}{2(m+2)}}^2 \times\mathbb{N}_{\frac{R(m+1)}{2(m+2)}}^2$ with  $f=-m\ln(-\sinh\sqrt{-\frac{R}{2(m+2)}}s)+C$ and  $\frac{R}{2(m+2)}<0$ where $\mathcal{D}_{\frac{R}{2(m+2)}}^2$ is the set $\{(s,t)|s<0\}$ in 2-dimensional the hyperbolic plane $\mathbb{H}_{\frac{R}{2(m+2)}}^2$  with
the metric $g=ds^2+\cosh^2\Big(\sqrt{-\frac{R}{2(m+2)}}s\Big)dt^2$ and $s\in(-\infty,0)$ can be viewed as the signed distance function on $\mathcal{D}^2_\frac{R}{2(m+2)}$ from the line $\{(s,t)|s = 0\}$

 {\rm (iv)} A domain in $\mathbb{H}_{\frac{R}{2(m+2)}}^2 \times\mathbb{N}_{\frac{R(m+1)}{2(m+2)}}^2$ with $f=-m\ln(\cosh\sqrt{-\frac{R}{2(m+2)}}s)+C$ and  $\frac{R}{2(m+2)}<0$,
where $s\in(-\infty,0)$ is the signed distance function on $\mathbb{H}^2_\frac{R}{2(m+2)}$ from a point.

{\rm (v)} A domain in $\mathbb{R}^4=\{s,t,x_3,x_4\}\setminus\{s\leq0\}$ with the metric $g=ds^2+s^{\frac{2(m-1)}{3(m+1)}}dt^2+s^{\frac{4}{3}}\hat{g}$, where
$\hat{g}$ is the pull-back of the Euclidean metric on the $(x_3,x_4)-$plane. Here $\rho=\lambda=0$, $m(m+1)>0$ and $f=\frac{2m}{3(m+1)}\ln s+C$.

{\rm (vi)}  A warped product with the metric of the form $g=ds^2+h(s)^2\bar{g}$ for a positive function $h$, where the Riemannian metric $\bar{g}$ has constant curvature. In
particular, $g$ is locally conformally flat.

\end{theorem}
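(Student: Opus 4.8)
The plan is to first translate the harmonic Weyl hypothesis into a Codazzi condition and squeeze a pointwise curvature identity out of it. In dimension four $\delta W=0$ is equivalent to the vanishing of the Cotton tensor $C_{ijk}=\nabla_i R_{jk}-\nabla_j R_{ik}-\frac16(\nabla_i R\,g_{jk}-\nabla_j R\,g_{ik})$, i.e. to the Schouten tensor being Codazzi. I would then differentiate (\ref{rhodef}) in the form $\nabla_j\nabla_k f=(\rho R+\lambda)g_{jk}-R_{jk}+\frac1m f_j f_k$ and antisymmetrize via the Ricci identity $\nabla_i\nabla_j\nabla_k f-\nabla_j\nabla_i\nabla_k f=R_{ijkl}\nabla^l f$. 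Using $C=0$ to replace $\nabla_i R_{jk}-\nabla_j R_{ik}$ and re-inserting (\ref{rhodef}) to remove the leftover Hessian terms gives the fundamental identity
\[
R_{ijkl}\nabla^l f=\left(\rho-\tfrac16\right)(\nabla_i R\,g_{jk}-\nabla_j R\,g_{ik})+\tfrac{\rho R+\lambda}{m}(f_j g_{ik}-f_i g_{jk})-\tfrac1m(f_j R_{ik}-f_i R_{jk}),
\]
which expresses $R(\cdot,\cdot,\cdot,\nabla f)$ entirely through $Rc$, $R$, $\nabla R$ and $\nabla f$.

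The two facts that drive everything are that $R$ is locally constant and that $\nabla f$ is a Ricci eigenvector on $U_0=\{\nabla f\neq0\}$. Tracing the fundamental identity over $i$ and $k$ yields
\[
\tfrac{m-1}{m}\,Rc(\nabla f)=-3\left(\rho-\tfrac16\right)\nabla R+\tfrac{3(\rho R+\lambda)-R}{m}\,\nabla f ,
\]
so (using $m\neq1$) $\nabla f$ is an eigenvector exactly when $\nabla R\parallel\nabla f$. To get this I would feed the divergence of (\ref{rhodef}) into the once-contracted second Bianchi identity $\nabla^iR_{ij}=\frac12\nabla_j R$ and the Bochner commutation $\Delta\nabla_j f=\nabla_j\Delta f+R_{jk}\nabla^k f$; the combination forces $\nabla R\wedge\nabla f=0$, and a level-set argument upgrades this to $R$ locally constant. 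This is exactly where $\rho\notin\{\frac14,\frac16\}$ is needed: $\rho=\frac16$ annihilates the coefficient of $\nabla R$ in the fundamental identity, and $\rho=\frac14=\frac1n$ degenerates the trace $R+\Delta f-\frac1m|\nabla f|^2=4(\rho R+\lambda)$ of (\ref{rhodef}) by deleting its scalar-curvature term.

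With $R$ constant and $E_1:=\nabla f/|\nabla f|$ a Ricci eigenvector, I would pass to the open dense set $U$ on which the eigenvalue multiplicities of $Rc$ are locally constant and apply the Derdzinski--Shen theory of Codazzi tensors: the eigendistributions are integrable with umbilic leaves and the curvature is block-diagonalized by the splitting $TM=\mathbb{R}E_1\oplus E_1^{\perp}$. Because $\nabla^2 f$ is fixed by $Rc$ through (\ref{rhodef}), the $E_1$-curves are geodesics and $f=f(s)$ in arclength, so $g$ becomes a (possibly multiply) warped metric over an $s$-interval. The classification is then governed by the eigenvalue pattern of $Rc|_{E_1^\perp}$: a single eigenvalue makes the $3$-dimensional fibre Einstein, hence of constant curvature, giving the conformally flat warped product (vi); a genuine splitting produces a Riemannian product $\Sigma^2\times N^2$ with $N^2$ of constant curvature $\frac{R(m+1)}{2(m+2)}$ and $f$ radial on $\Sigma^2$, while a degenerate sub-branch whose fibres stay warped closes up only when $\rho=\lambda=0$ and yields the singular power-law metric (v).

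Finally I would integrate the $E_1E_1$-component of (\ref{rhodef}), an ODE for $f(s)$, in each branch. In the product case the profile equation for $\Sigma^2$ integrates to $\cos$, $\cosh$ or $\sinh$; matching the sign of $\frac{R}{2(m+2)}$ and the range of $s$ then distinguishes the hemisphere (ii), the half hyperbolic plane (iii) and the hyperbolic plane (iv) together with the stated formulas for $f$, the trivial case $\nabla f\equiv0$ being the Einstein metric (i). Throughout, the values $m\in\{\pm1,-2\}$ appear as genuine degeneracies: $m=1$ is the pole of $\frac{m}{m-1}$ above, $m=-2=2-n$ is the conformally-Einstein value at which $\frac{R}{2(m+2)}$ is undefined, and $m=-1$ collapses $\frac{R(m+1)}{2(m+2)}$ to $0$. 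I expect the main obstacle to be the second step --- proving $R$ constant and $\nabla f$ a Ricci eigenvector without the radial condition $W(\nabla f,\cdot,\cdot,\cdot)=0$ assumed in the earlier literature --- since the eigenvector property can no longer be read off the Weyl tensor and must instead be extracted from the full strength of the Codazzi and Bianchi/Bochner identities; the ensuing eigenvalue case analysis, though long, is comparatively mechanical.
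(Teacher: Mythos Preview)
Your structural setup --- the fundamental curvature identity, the Ricci-eigenvector property of $\nabla f$, and the passage to the open dense set where eigenvalue multiplicities are constant via Derdzi\'nski's Codazzi theory --- matches the paper. But there is a genuine error at the heart of your plan: the scalar curvature is \emph{not} locally constant under the harmonic Weyl hypothesis. What the Bianchi/Bochner identities give you is only $\nabla R\wedge\nabla f=0$, i.e.\ $R=R(s)$ is constant on level sets of $f$; there is no ``level-set argument'' that upgrades this to $R'=0$. Indeed case~(v) of the theorem has $R=-\tfrac{4m(m-1)}{9(m+1)^2s^2}$, manifestly non-constant. The phrase ``$R$ is the constant scalar curvature'' in the theorem statement applies only to cases (ii)--(iv), where constancy is an \emph{output} of the ODE analysis in that particular branch, not an input. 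Since your subsequent argument (the integration of the $E_1E_1$-component of the equation, the product splitting, etc.) leans on $R$ being constant, this gap propagates.

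A second gap is that your eigenvalue case analysis on $E_1^{\perp}$ is incomplete: you treat only the patterns $\lambda_2=\lambda_3=\lambda_4$ and $\lambda_2\neq\lambda_3=\lambda_4$, but omit the case where $\lambda_2,\lambda_3,\lambda_4$ are pairwise distinct. The paper devotes an entire section to this case, and it is not mechanical: one must compute the Lie-bracket structure constants $\alpha,\beta,\gamma$ of the adapted frame, express $f'$ and the mixed connection coefficient $\Gamma_{34}^2\Gamma_{43}^2$ as rational functions of $\zeta_2,\zeta_3,\zeta_4$, and then compare two independent expressions for $(\alpha^2)'$ to force the algebraic identity $\sum_{i\neq j}\zeta_i^2\zeta_j=6\zeta_2\zeta_3\zeta_4$, which in turn gives $f'=0$. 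This is where $m\neq-1$ is genuinely used (a factor $\tfrac{m+2}{m+1}$ appears), not in the curvature formula $\tfrac{R(m+1)}{2(m+2)}$ as you suggest. Likewise, the exclusions $\rho\neq\tfrac14$ and $m\neq\pm1$ are needed to make the coupled ODE system for $(\zeta_2,\zeta_3)$ in the $1{+}1{+}2$ case nondegenerate, not for the trace of the defining equation. In short, the ``comparatively mechanical'' case analysis is where almost all the work --- and all the delicate exclusions --- actually live.
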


 Our result can be viewed as a classification of 4-d nondegenerate gradient Einstein-type manifolds with $\mu\neq0$ under the harmonic Weyl curvature condition.

We first show that Ricci eigenvalues depend only on one variable $s=\int\frac{\nabla f}{|\nabla f|}$ due to harmonic Weyl curvature condition.
Then most geometric quantities involved are also functions of $s$ only. And then we define a function $\zeta_i(s):=<\nabla_{E_i}E_1,E_i>$ where $\{E_1=\frac{\nabla f}{|\nabla f|},\cdots,E_4\}$ is a orthonormal Ricci-eigen frame.
 We will consider this problem as three divided cases according to the distinctiveness of Ricci eigenvalues.
For each case, we can express Riemannian curvatures and the potential functin $f$ into $\zeta_i$ thanks to A.Derdzinski's lemma\cite{Derd}. Putting these
expression into the $(m,\rho)$-quasi-Einstein equation and the harmonic Weyl equation, we will get several ordinary differential equations of $\zeta_i$.
Analyzing these ODEs, we can get a number of  possible relations between $\zeta_i$ and then we are able to solve these ODEs for each cases.

\bigskip

From our main result, we can get a classification of complete $(m,\rho)$-quasi-Einstein manifolds and also a local classification under harmonic curvature condition.
 In the last section, we will give a classification complete $(\lambda,4+m)$-Einstein manifolds which can be viewed as $(m,0)$-quasi-Einstein manifolds.  In this context, a complete manifold means a manifold with boundary which is Cauchy complete.
\begin{cor}\label{completecase}
   A (Cauchy) complete $(\lambda,4+m)$-Einstein manifold $(M^4,g,w=e^{-\frac{f}{m}})$ with harmonic Weyl curvature and $m>1$ is is isometric to one of the following;
Here $C$ is an arbitrary positive constant.

  {\rm (i)} $g$ is an Einstein metric with $w$ a constant function.

  {\rm (ii)} A finite quotient of $\mathcal{B}_{\frac{\lambda}{m+1}}^2\times\mathbb{S}_{\lambda}^2$ with $\lambda>0$ and
 $w=e^{-\frac{f}{m}}=C\cos\sqrt{\frac{\lambda}{m+1}}s$.

{\rm (iii)} A finite quotient of $\mathcal{D}_{\frac{\lambda}{m+1}}^2\times\mathbb{H}_{\lambda}^2$ with $\lambda<0$ and
$w=-C\sinh\sqrt{\frac{-\lambda}{m+1}}s$.

{\rm (iv)} A finite quotient of $\mathbb{H}_{\frac{\lambda}{m+1}}^2\times\mathbb{H}_{\lambda}^2$ with $\lambda<0$ and
$w=C\cosh\sqrt{\frac{-\lambda}{m+1}}s$.

{\rm (v)} $g$ is locally conformally flat.
\end{cor}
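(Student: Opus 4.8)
The plan is to read the corollary off Theorem~\ref{mainthm} by specializing to $\rho=0$ and then globalizing under Cauchy completeness. First I would record the dictionary: setting $w=e^{-f/m}$ turns the $(m,\rho)$-quasi-Einstein equation (\ref{rhodef}) with $\rho=0$ into the $(\lambda,4+m)$-Einstein equation (\ref{hpwdef}), so the two notions coincide, and a $(\lambda,4+m)$-Einstein manifold is precisely an $(m,0)$-quasi-Einstein manifold. Since $\rho=0\notin\{\tfrac14,\tfrac16\}$ and $m>1$ forces $m\notin\{\pm1,-2\}$, Theorem~\ref{mainthm} applies and produces an open dense $U\subset M$ carrying one of the six local models. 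I would then translate the parameters. In each product model (ii)--(iv) the metric is a genuine Riemannian product $\Sigma^2\times\mathbb{N}^2$, so restricting (\ref{hpwdef}) to the fiber directions---where $\nabla dw$ vanishes for a product---forces the fiber to be Einstein with $Rc=\lambda g$, i.e.\ of constant curvature $\lambda$. Comparing with the fiber curvature $\frac{R(m+1)}{2(m+2)}$ prescribed by Theorem~\ref{mainthm} gives $R=\frac{2(m+2)}{m+1}\lambda$, hence base curvature $\frac{R}{2(m+2)}=\frac{\lambda}{m+1}$; substituting into the explicit $f$ of each model and exponentiating produces exactly $w=C\cos\sqrt{\lambda/(m+1)}\,s$, $w=-C\sinh\sqrt{-\lambda/(m+1)}\,s$, and $w=C\cosh\sqrt{-\lambda/(m+1)}\,s$, with $C=e^{-C_0/m}>0$ the arbitrary positive constant.

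Next I would eliminate the singular model (v). There $\rho=\lambda=0$ and $g=ds^2+s^{\frac{2(m-1)}{3(m+1)}}dt^2+s^{\frac{4}{3}}\hat g$; the slice $\{s=0\}$ sits at finite $g$-distance, the $dt^2$ and $\hat g$ directions collapse there, and for $m>1$ the exponents are not those of a smooth cap, so the curvature blows up and $\{s=0\}$ is a genuine metric singularity. Such a slice can be neither an interior point nor a smooth boundary of a Cauchy-complete manifold-with-boundary, so completeness rules (v) out. Since (v) is the only nontrivial local model available when $\lambda=0$ (the products (ii)--(iv) require $R\neq0$, hence $\lambda\neq0$, with the sign of $\lambda$ pinned to that of the base curvature via $R=\frac{2(m+2)}{m+1}\lambda$), the scalar-flat case reduces to the Einstein or locally conformally flat alternatives, i.e.\ corollary (i) or (v).

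The substantive step is the globalization. The gradient $\nabla w$ (equivalently $\nabla f$) selects the distinguished direction $E_1$; its flow lines, reparametrized by $s=\int\frac{\nabla f}{|\nabla f|}$, are unit-speed geodesics, and the level sets $\{w=c\}$ are the totally geodesic fibers carrying the complete constant-curvature metric of the $\mathbb{N}^2$ factor (completeness of a fiber follows since it is a closed totally geodesic slice of the complete $M$). Running this geodesic flow and invoking Cauchy completeness, I would extend $s$ to its maximal interval: at an interior critical point of $w$ (where $\nabla w=0$) the base closes up smoothly to the pole of a hemisphere or the center used for $\mathbb{H}^2$, whereas at a point with $w\to0$ the fiber does \emph{not} degenerate---the metric is a product in the base directions, not warped---so one reaches the equator of $\mathcal{B}^2$ or the bounding geodesic of $\mathcal{D}^2$, realized as $\partial M=\{w=0\}$. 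This forces the base to be the full model surface $\mathcal{B}^2_{\lambda/(m+1)}$, $\mathcal{D}^2_{\lambda/(m+1)}$ or $\mathbb{H}^2_{\lambda/(m+1)}$ and the fiber the complete $\mathbb{S}^2_\lambda$ or $\mathbb{H}^2_\lambda$. A de~Rham-type splitting of the universal cover, together with the deck group acting by isometries that preserve the globally defined $w$ (hence preserve $s$ and thus fix the distinguished pole or bounding geodesic in the base, landing in a compact rotation group there), then exhibits $M$ as a quotient of the stated product. Patching the a priori disconnected local pieces of $U$ into one global product and, above all, showing this deck group is \emph{finite}, is the main obstacle---this is where the global topology and the harmonic Weyl condition must be brought to bear---while the Einstein and locally conformally flat models pass to (i) and (v) directly.
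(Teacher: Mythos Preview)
Your approach is correct and parallels the paper's: specialize Theorem~\ref{mainthm} to $\rho=0$ and then globalize. Two differences merit comment. First, you obtain the identification $\frac{R}{2(m+2)}=\frac{\lambda}{m+1}$ by restricting (\ref{hpwdef}) to the fiber directions and reading off $Rc_{\mathbb{N}^2}=\lambda\, g_{\mathbb{N}^2}$; the paper instead simply evaluates the formula $R=\frac{-2\lambda(m+2)}{4\rho-1+m(2\rho-1)}$ recorded at the end of Proposition~\ref{112result} at $\rho=0$. Both give the same answer, and your route is arguably more transparent. Second, and more substantively, you eliminate the singular model (v) by observing that $R=-\frac{4m(m-1)}{9(m+1)^2s^2}$ blows up at the finite-distance slice $\{s=0\}$, which cannot be absorbed as a smooth interior or boundary point. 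The paper takes a different route: it computes the Kim--Kim constant $\mu=w\Delta w+(m-1)|\nabla w|^2+\lambda w^2$ for the type~(v) model, finds $\mu=0$, and then invokes the He--Petersen--Wylie rigidity theorem that a complete $(\lambda,n+m)$-Einstein manifold with $m>1$, $\lambda\ge0$, $\mu\le0$ is trivial. Since $\mu$ is a \emph{global} constant, this argument requires no analytic continuation to carry the local form out to $s=0$; your more elementary curvature argument tacitly assumes that the type~(v) form persists along the $E_1$-geodesics until the singularity is reached, which you should justify explicitly via the real-analyticity established in the paper. Finally, your globalization sketch (geodesic flow in $s$, de Rham splitting, deck group preserving $w$) is in fact more detailed than what the paper supplies: the paper simply asserts the passage from the local models of Theorem~\ref{mainthm} to the complete products and their finite quotients without further argument, so the obstacle you flag concerning finiteness of the deck group is left open there as well.
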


This paper is organized as follows. In section 2, we study some known properties of $(m,\rho)$-quasi-Einstein manifolds with harmonic Weyl tensor.
In section 3, we discuss the case that all eigenvalues of Ricci tensor are mutually distinct. In section 4, we classify the case that two of them are equal i.e., $\lambda_3=\lambda_4$.
In section 5, we study the remaining case $\lambda_2=\lambda_3=\lambda_4$ and prove our theorems by combining previous sections. In section 6, we give a classification of $(\lambda,4+m)$-Einstein manifolds.

\section{$(m,\rho)$-quasi-Einstein manifolds with harmonic Weyl curvature}

\quad In this section we fix our notations and discuss some basic facts and known results about $(m,\rho)$-quasi-Einstein manifolds  with harmonic Weyl curvature.

The Riemanian curvature is defined  by
\begin{equation*}
  R(X,Y,Z,W)=<\nabla_X\nabla_Y Z-\nabla_Y\nabla_X Z-\nabla_{[X,Y]}Z,W>
\end{equation*}

Let $(M^n,g,f)$ be a $(m,\rho)$-quasi-Einstein manifold with the harmonic Weyl curvature condition. Then we obtain the following equation \cite{GH} from the well-known equality
$(\nabla_X H_f)(Y,Z)-(\nabla_Y H_f)(X,Z)=R(X,Y,\nabla f,Z)$.
\begin{align}
 R(X,Y,\nabla f,Z)=\Big\{\rho-\frac{1}{2(n-1)}\Big\}\{X(R)g(Y,Z)-Y(R)g(X,Z)\}  \nonumber\\
-\frac{1}{m}df(X)\nabla df(Y,Z) +\frac{1}{m}df(Y)\nabla df(X,Z)  \label{basiceq}
\end{align}

As mentioned in the introduction, if $m=2-n$, $(M^n,g)$ is conformal to an  Einstein metric. Thus from now on, we only consider the case $m\neq 2-n$.
 \begin{lemma} \label{threesolb}
For a  $(m,\rho)$-quasi-Einstein manifold $(M^n, g, f)$, $n\geq 3$ with the harmonic Weyl curvature condition, $\rho\neq \frac{1}{2(n-1)}$ and $m\neq2-n$,  let $c$ be a regular value of $f$ and $\Sigma_c= \{ x | f(x) =c  \}$  be the level surface of $f$. Then the followings hold;

{\rm (i)} Where $\nabla f \neq 0$,  $E_1 := \frac{\nabla f }{|\nabla f  | }$ is an eigenvector field of $Rc$.

{\rm (ii)} The scalar curvature $R$ and $ |\nabla f|^2$  are constant on a connected component of $\Sigma_c$.

{\rm (iii)} There is a function $s$ locally defined with   $s(x) = \int  \frac{   d f}{|\nabla f|} $, so that $ds =\frac{   d f}{|\nabla f|}$ and $E_1 = \nabla s$.

{\rm (iv)}  $R({E_1, E_1})$ is constant on a connected component of $\Sigma_c$.

{\rm (v)} $\nabla _{E_1}E_1=0$

\end{lemma}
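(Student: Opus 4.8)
The plan is to produce, for every vector $Y$ tangent to the regular level surface $\Sigma_c$ (equivalently $Y\perp\nabla f$), three scalar identities that are linear in the two directional derivatives $Y(R)$ and $Y(|\nabla f|^2)$, and then to observe that the two genericity hypotheses $m\neq 2-n$ and $\rho\neq\frac{1}{2(n-1)}$ are exactly what forces this linear system to admit only the trivial solution. Once $Y(R)=Y(|\nabla f|^2)=0$ for all such $Y$, parts (i)--(v) follow quickly.

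First I would use only the defining equation (\ref{rhodef}). Evaluating it on the pair $(\nabla f, Y)$ and using $df(Y)=0$, $g(\nabla f,Y)=0$, together with the Hessian identity $\nabla df(\nabla f,Y)=\frac{1}{2} Y(|\nabla f|^2)$, gives
\[
Rc(\nabla f, Y)=-\frac{1}{2}\, Y(|\nabla f|^2).
\]
Next I would exploit the harmonic Weyl consequence (\ref{basiceq}) in two ways. Setting $X=Z=\nabla f$ and invoking the antisymmetry of $R$ in its last two slots kills the left-hand side, which yields
\[
\Big(\rho-\frac{1}{2(n-1)}\Big)Y(R)+\frac{1}{2m}\,Y(|\nabla f|^2)=0.
\]
Tracing (\ref{basiceq}) over $X$ and $Z$ against an orthonormal frame reproduces $Rc(\nabla f,Y)$ on the left, and after using $\sum_i df(e_i)\nabla df(Y,e_i)=\frac{1}{2}Y(|\nabla f|^2)$ and $df(Y)=0$ gives
\[
Rc(\nabla f,Y)=-(n-1)\Big(\rho-\frac{1}{2(n-1)}\Big)Y(R)-\frac{1}{2m}\,Y(|\nabla f|^2).
\]

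Combining these three displays is the crux of the argument. Eliminating the quantity $\big(\rho-\frac{1}{2(n-1)}\big)Y(R)$ between the second and third identities, then comparing the result with the first, collapses everything to $\frac{m+n-2}{2m}\,Y(|\nabla f|^2)=0$; since $m\neq 0$ and $m\neq 2-n$ this forces $Y(|\nabla f|^2)=0$, whereupon the second identity together with $\rho\neq\frac{1}{2(n-1)}$ forces $Y(R)=0$. This is part (ii). Substituting $Y(|\nabla f|^2)=0$ back into the first display gives $Rc(\nabla f,Y)=0$ for every $Y\perp\nabla f$, i.e. part (i).

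The remaining parts are then formal. For (iii), since $|\nabla f|$ is constant on each connected level set it is locally a function of $f$, so the $1$-form $\frac{df}{|\nabla f|}$ is closed and integrates to a local function $s$ with $ds=\frac{df}{|\nabla f|}$ and $\nabla s=E_1$. Part (v) is immediate from $|\nabla s|\equiv 1$, because $\nabla_{E_1}E_1=\nabla_{\nabla s}\nabla s=\frac{1}{2}\nabla|\nabla s|^2=0$. Finally, for (iv) I would read $Rc(E_1,E_1)$ off (\ref{rhodef}) as $\rho R+\lambda-E_1(|\nabla f|)+\frac{1}{m}|\nabla f|^2$; since $R$, $|\nabla f|^2$ and hence $E_1(|\nabla f|)=\frac{d|\nabla f|}{ds}$ are all functions of $s$ alone, the right-hand side is constant on each level set. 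The only genuinely delicate point is bookkeeping the curvature-sign and Ricci-trace conventions in the two uses of (\ref{basiceq}); the geometry is otherwise routine.
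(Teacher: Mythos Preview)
Your argument is correct, and the overall strategy coincides with the paper's: manufacture enough linear relations among the tangential quantities $Rc(\nabla f,Y)$, $Y(R)$ and $Y(|\nabla f|^2)$ so that the two genericity hypotheses $m\neq 2-n$ and $\rho\neq\frac{1}{2(n-1)}$ force all three to vanish, after which (iii)--(v) are routine. The execution differs. You stay coordinate-free and mine the pre-packaged identity (\ref{basiceq}) twice---once via the substitution $X=Z=\nabla f$, once by tracing over $X$ and $Z$---to obtain the linear system directly. The paper instead introduces adapted coordinates with $\partial_1=\frac{\nabla f}{|\nabla f|^2}$, records the derivative identities for $|\nabla f|^2$ and $R$ coming from (\ref{rhodef}) and its trace, derives from them a formula for $\nabla_1 R_{i1}-\nabla_i R_{11}$, and then compares that formula with the raw Schouten--Codazzi form of harmonic Weyl, $\nabla_1 R_{i1}-\nabla_i R_{11}=\frac{1}{2(n-1)}\big(\nabla_1 R\,g_{i1}-\nabla_i R\,g_{11}\big)$, to conclude $R_{1i}=0$. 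Your route is shorter and avoids any choice of chart; the paper's route keeps the Codazzi tensor $Rc-\frac{R}{2(n-1)}g$ explicitly in view, which is convenient since that tensor drives the rest of the argument via Derdzi\'nski's lemma.
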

\begin{proof}
  We only prove that $Rc(X,\nabla f)=0$ when $X\perp \nabla f$ in this proof. For other details, one may follow the argument in the proof of Lemma 3.3 in \cite{CC1} or see \cite{HPW}. In any neighborhood, where $|\nabla f|\neq0$, of a level set $\Sigma_c=\{p\in M^n|f(p)=c\}$, we can express the metric $g$ as  $g=\frac{1}{|\nabla f|^2}df\otimes df+g_{ij}(f,\theta)d\theta^i\otimes d\theta^j$ where $\theta=(\theta^2,\ldots,\theta^{n})$ denotes coordinates for $\Sigma_c$. Take $\partial_1=\frac{\nabla f}{|\nabla f|^2}$ and
  $\partial_i=\frac{\partial}{\partial\theta_i}$ for $i\geq 2$ and suppose $\rho\neq \frac{1}{2n-2}$. Then by direct computations, we have
\begin{align*}
  &\nabla_{\partial_i}|\nabla f|^2=-2|\nabla f|^2R_{1i}\\
  &\nabla_{\partial_1}|\nabla f|^2=-2|\nabla f|^2R_{11}+\frac{2}{m}|\nabla f|^2+2(\rho R+\lambda)\\
  &\{(n-1)\rho-\frac{1}{2}\}\nabla_{\partial_i}R=(\frac{1}{m}-1)|\nabla f|^2R_{1i}\\
  &\{(n-1)\rho-\frac{1}{2}\}\nabla _{\partial_1} R=(\frac{1}{m}-1)|\nabla f|^2 R_{11}+\frac{1}{m}(\rho R+\lambda)(n-1)-\frac{R}{m}\\
  &\nabla_1R_{i1}-\nabla_i R_{11}=\frac{2(m+n-2)\rho-1}{m\{2(n-1)\rho-1\}}R_{i1}
\end{align*}
Since $M^n$ satisfies the harmonic Weyl condition, we also have $\nabla_1 R_{i1}-\nabla_i R_{11}=\frac{\nabla_1 R}{2n-2}g_{i1}-\frac{\nabla_i R}{2n-2}g_{11}=\frac{m-1}{m(n-1)\{2(n-1)\rho-1\}}R_{i1}$. Hence we get $\frac{2-m-n}{m(n-1)}R_{i1}=0$. Since $m\neq2-n$, if $n\geq 3$ then we have $R_{i1}=0$.
\end{proof}

 A crucial factor that makes J.Kim's method works is the fact that a manifold which satisfies harmonic Weyl condition has a Codazzi tensor.
A Codazzi tensor $\mathcal{C}$ on a Riemannian manifold $M$ is a symmetric tensor of covariant order 2 such that $d^{\nabla} \mathcal{C}=0$, which can be written in local
coordinates as $\nabla_k \mathcal{C}_{ij}=\nabla_{i}\mathcal{C}_{kj}$. The harmonic Weyl curvature condition $\delta W=0$ can be written as $d^\nabla(Rc-\frac{R}{2n-2}g)=0$.
Therefore, $\mathcal{T}:=Rc-\frac{R}{2n-2}g$ is a Codazzi tensor when $M$ satisfies the harmonic Weyl condition.  A.Derdzinski \cite{Derd} described properties about a Codazzi tensor as following; For a Codazzi tensor $\mathcal{C}$ and a point $x$ in $M$,
let $E_{\mathcal{C}}(x)$ be the number of distinct eigenvalues of $\mathcal{C}_x$, and set $M_\mathcal{C}:=\{x\in M|E_\mathcal{C}\textrm{ is constant in a neighborhood of }x\}$,
then $M_\mathcal{C}$ is an open dense subset of $M$ and that in each connected component of $M_\mathcal{C}$, the eigenvalues are well-defined and differentiable
functions \cite{Derd}.

\begin{lemma}
  Let $(M^n,g,f)$, $n\geq 3$, be a $(m,\rho)$-quasi-Einstein manifold with $\rho\notin\{\frac{1}{n},\frac{1}{2(n-1)}\}$. Then, in harmonic coordinates, the metric $g$ and the
function $f$ are real analytic.
\end{lemma}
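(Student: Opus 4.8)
The plan is to reduce the statement to a standard application of Morrey's analyticity theorem for nonlinear elliptic systems, after writing the $(m,\rho)$-quasi-Einstein equation in harmonic coordinates. Around any point I would choose harmonic coordinates $\{x^i\}$, so that $g^{ij}\Gamma_{ij}^k=0$; these exist locally and the components $g_{ij}$ are a priori smooth. In such coordinates the Ricci tensor takes the DeTurck--Kazdan form $R_{ij}=-\tfrac12 g^{kl}\partial_k\partial_l g_{ij}+Q_{ij}(g,\partial g)$, where $Q_{ij}$ is a universal expression, quadratic in $\partial g$ with coefficients rational in the $g_{ij}$, hence real-analytic wherever $g$ is positive definite. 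Likewise $\nabla_i\nabla_j f=\partial_i\partial_j f-\Gamma_{ij}^k\partial_k f$ and, since $g^{ij}\Gamma_{ij}^k=0$, the Laplacian simplifies to $\Delta f=g^{ij}\partial_i\partial_j f$. Thus every term in (\ref{rhodef}) is expressed through $g_{ij}$, $f$ and their derivatives up to second order in a way that is analytic in these quantities.

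The one genuinely non-cosmetic algebraic step is the elimination of the scalar curvature $R$, and this is exactly where $\rho\neq\frac1n$ is used. Tracing (\ref{rhodef}) gives $R+\Delta f-\tfrac1m|\nabla f|^2=n(\rho R+\lambda)$, so that $R=\frac{1}{1-n\rho}\big(-\Delta f+\tfrac1m|\nabla f|^2+n\lambda\big)$. Substituting this back into (\ref{rhodef}) removes $R$ entirely and presents the quasi-Einstein equation as a second-order system $\Phi\big(g,\partial g,\partial^2 g,\,f,\partial f,\partial^2 f\big)=0$ in the unknowns $(g_{ij},f)$, whose defining function is real-analytic on the open set where $g$ is positive definite. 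Its leading part in $g$ is the elliptic operator $-\tfrac12 g^{kl}\partial_k\partial_l g_{ij}$, coupled to $f$ through the terms $\partial_i\partial_j f$ and $\frac{\rho}{1-n\rho}\,g_{ij}\,g^{kl}\partial_k\partial_l f$. Granting that this system is elliptic, the conclusion follows from Morrey's theorem: a smooth (indeed $C^{2,\alpha}$) solution of a nonlinear elliptic system whose defining function is real-analytic in all of its arguments is itself real-analytic. Since $g$ and $f$ are smooth to begin with, they are then real-analytic in the chosen harmonic coordinates.

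The step I expect to be the main obstacle is the verification of ellipticity, which requires care because of the potential $f$. A naive count shows the symmetric tensor equation alone carries $\tfrac{n(n+1)}2$ scalar equations for the $\tfrac{n(n+1)}2+1$ unknowns $(g_{ij},f)$; it is one short of being determined, reflecting the fact that in the quasi-Einstein setting the trace identity fixes $\Delta f$ rather than the conformal scale that it pins down in the Einstein case. I would resolve this in two stages. First I would establish that $g$ alone is real-analytic, closing the system by adjoining a differential consequence of (\ref{rhodef})---for instance the relation coming from the contracted second Bianchi identity, which produces an overdetermined elliptic system with injective principal symbol to which the analytic-regularity theory still applies. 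Then, with $g$ real-analytic, I would read (\ref{rhodef}) as the single scalar elliptic equation $\Delta f=F(g,\partial g,\partial^2 g,f,\partial f)$, now with real-analytic coefficients, so that $f$ is real-analytic by the same regularity theorem.

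Finally, I would record where the excluded values enter: $1-n\rho$ vanishes exactly when $\rho=\frac1n$, which is the coefficient that must be inverted to eliminate $R$, while $2(n-1)\rho-1$ vanishes when $\rho=\frac1{2(n-1)}$, the coefficient governing the Codazzi/trace terms; away from both values the symbol computations remain non-degenerate, which is why the hypothesis $\rho\notin\{\frac1n,\frac1{2(n-1)}\}$ is imposed.
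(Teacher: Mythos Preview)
Your proposal is correct and follows essentially the same route that the paper intends: the paper's proof is simply a reference to \cite{CM}, Theorem 2.4, whose argument is precisely the harmonic-coordinates reduction to a quasilinear elliptic system followed by Morrey's analyticity theorem. You have correctly isolated the two places where the excluded values of $\rho$ enter---$\rho=\tfrac{1}{n}$ in inverting the trace to eliminate $R$, and $\rho=\tfrac{1}{2(n-1)}$ in the Bianchi-type differential consequence needed to close the system---and your two-stage plan (first an overdetermined elliptic system for $g$, then a scalar elliptic equation for $f$ with analytic coefficients) is exactly how the ellipticity issue is resolved in that reference.
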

\begin{proof}
  One can follow the argument in the proof of [\cite{CM}, Theorem 2.4]
\end{proof}

  Thus
if $f$ is not a constant and $\rho\notin\{\frac{1}{n},\frac{1}{2(n-1)}\}$, then $\{\nabla f\neq0\}$ is open and dense in $M$. So  $M_{\mathcal{T}}\cap\{\nabla f\neq0\}$ is an open dense subset of $M$.

 We consider orthonormal Ricci-eigen vector fields $E_i$ in a
neighborhood of each point in $M_{\mathcal{T}}\cap\{\nabla f\neq0\}$. Let $E_1=\frac{\nabla f}{|\nabla f|}$, then for $i>1$, $E_i$ is tangent to smooth level hypersurfaces of $f$.
 We call these local orthonormal Ricci-eigen vector fields $E_i$ an \emph{adapted frame field} of $(M^n,g,f)$ and denote $R_{ij}:=R(E_i,E_j)=\lambda_i\delta_{ij}$.

 From Lemma \ref{threesolb}, in a neighborghood
of a point $p\in M_{\mathcal{T}}\cap\{\nabla f\neq 0\}$, $f$ and $R$ may be considered as functions of the variable $s$ only. Actually this is not just for $R$,
all eigenvalues of Ricci tensor depend only on $s$. We will prove this in the next Lemma. We write the derivative in $s$ by a prime. Also note that $df(E_1)=g(\nabla f,\frac{\nabla f}{|\nabla f|})=|\nabla f|$.
So, $|\nabla f|=f'$. Then from the equation (\ref{basiceq}) we have
\begin{equation}
  R_{1ii1}=\frac{R'}{f'}(\frac{1}{2n-2}-\rho)-\frac{1}{m}(R_{ii}-\rho R-\lambda) \label{R1ii1}
\end{equation}
And from the $(m,\rho)$-quasi-Einstein equation (\ref{rhodef}), for $i>1$, we can get
\begin{equation}
 \nabla_{E_i}E_1=:\zeta_i E_i=\frac{1}{f'}(\rho R+\lambda - R_{ii})E_i
\end{equation}

 \begin{lemma}\label{depends}
For a $(m,\rho)$-quasi-Einstein manifold $(M^4,g,f)$ with harmonic Weyl curvature, $\rho\neq \frac{1}{6}$ and $m\neq -2$, and for a local adapted frame field $\{E_i\}$ in an open subset of $M_{\mathcal{T}}\cap\{\nabla f\neq 0\}$,
the Ricci-eigen functions $\lambda_i$, $i=1,\cdots,n$, are constant on a connected component of a regular level hypersurface $\Sigma_c$ of $f$, and
 depend on the local variable $s$ only.

 \end{lemma}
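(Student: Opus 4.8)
The plan is to prove that every Ricci-eigenvalue $\lambda_a$ is constant on each connected component of a regular level set $\Sigma_c$; since $\Sigma_c$ is a level set of $s$, this is exactly the assertion that $\lambda_a=\lambda_a(s)$. By Lemma \ref{threesolb} I already know that $R$, $|\nabla f|=f'$ and $\lambda_1=R(E_1,E_1)$ are constant on each component of $\Sigma_c$. Because $R=\sum_i\lambda_i$ and $\lambda_1$ are already functions of $s$, it suffices to prove $E_c(\lambda_a)=0$ for every pair of tangential indices $a,c\geq 2$; moreover, summing over $a$ and using $E_c(R-\lambda_1)=0$ shows that the ``diagonal'' derivatives $E_c(\lambda_c)$ are determined by the off-diagonal ones, so I only need to treat $E_c(\lambda_a)$ with $a\neq c$.

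First I would extract the tangential variation of the eigenvalues from the Codazzi condition. Since $\delta W=0$, the tensor $\mathcal{T}=Rc-\tfrac{R}{6}g$ is a Codazzi tensor and $\{E_i\}$ is an eigenframe for it with eigenvalues $\mu_i=\lambda_i-\tfrac{R}{6}$. Expanding $d^{\nabla}\mathcal{T}=0$ in this frame and using $E_c(R)=0$ yields, for $a\neq c$,
\[ E_c(\lambda_a)=(\lambda_a-\lambda_c)\,\langle\nabla_{E_a}E_a,E_c\rangle . \]
Thus the lemma reduces to showing $(\lambda_a-\lambda_c)\langle\nabla_{E_a}E_a,E_c\rangle=0$. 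When $\lambda_a=\lambda_c$ this is automatic; equivalently, by Derdzinski's lemma \cite{Derd}, an eigenvalue of multiplicity at least two is constant along its own eigendistribution. So the whole problem concentrates on pairs with $\lambda_a\neq\lambda_c$, where I must prove the connection coefficient $\langle\nabla_{E_a}E_a,E_c\rangle$ vanishes.

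The new ingredient, beyond the Codazzi property, is that $E_1=\nabla f/|\nabla f|$ is a distinguished eigenvector with $\nabla_{E_1}E_1=0$. From (\ref{rhodef}) one has $\nabla_{E_a}E_1=\zeta_aE_a$ with $\zeta_a=\tfrac1{f'}(\rho R+\lambda-\lambda_a)$, so the $E_a$ are simultaneously principal directions of the level hypersurfaces and $\zeta_a\neq\zeta_c\iff\lambda_a\neq\lambda_c$. Feeding $X=E_1,\,Y=E_a,\,Z=E_c$ into (\ref{basiceq}) forces $R(E_1,E_a,E_1,E_c)=0$ for $a\neq c$; computing the same component from the definition of curvature gives $(\zeta_a-\zeta_c)\langle\nabla_{E_1}E_a,E_c\rangle=0$, i.e.\ the \emph{radial parallelism} $\langle\nabla_{E_1}E_a,E_c\rangle=0$ whenever $\lambda_a\neq\lambda_c$. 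In the extreme case where all four eigenvalues are distinct this says the whole adapted frame is parallel along $E_1$. I would then differentiate the coefficient $w_{ac}:=\langle\nabla_{E_a}E_a,E_c\rangle$ along the geodesic field $E_1$: using $\nabla_{E_1}E_a=0$, the identity $R(E_1,E_a)E_a=\nabla_{E_1}\nabla_{E_a}E_a-\zeta_a\nabla_{E_a}E_a$, and the vanishing $R(E_1,E_a,E_a,E_c)=0$ (again from (\ref{basiceq}) via a pair symmetry), one obtains the clean homogeneous ODE $E_1(w_{ac})=-\zeta_a\,w_{ac}$ along each integral curve of $E_1$.

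The main obstacle is to upgrade this ODE to the actual vanishing $w_{ac}\equiv0$, since a homogeneous linear equation only forces $w_{ac}$ to be either identically zero or nowhere zero along the curve. The plan is to produce a second, independent relation pinning down $w_{ac}$: differentiating $E_c(\lambda_a)=(\lambda_a-\lambda_c)w_{ac}$ along $E_1$ and comparing with the commutator $[E_1,E_c]=\nabla_{E_1}E_c-\zeta_cE_c$ (simplified by the radial parallelism), together with the Riccati relation $E_1(\zeta_a)+\zeta_a^2=-R_{1aa1}$ and the expression (\ref{R1ii1}) for $R_{1aa1}$, should yield an algebraic constraint that is incompatible with $w_{ac}\neq0$ when $\lambda_a\neq\lambda_c$. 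This bookkeeping has to be organized according to the multiplicity pattern of the Ricci eigenvalues at the point, which is locally constant on $M_{\mathcal{T}}$ (so this is not the same as, nor circular with, the global trichotomy of the later sections); Derdzinski's lemma disposes of the coincident-eigenvalue directions for free, and I expect the genuinely delicate case to be the one in which the tangential eigenvalues are mutually distinct, where the ODE-plus-constraint argument must do all of the work.
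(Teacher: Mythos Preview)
Your reduction is correct: the Codazzi identity gives $E_c(\lambda_a)=(\lambda_a-\lambda_c)\,w_{ac}$ with $w_{ac}=\langle\nabla_{E_a}E_a,E_c\rangle$, and the radial parallelism $\langle\nabla_{E_1}E_a,E_c\rangle=0$ together with $R(E_1,E_a,E_a,E_c)=0$ really does yield the ODE $E_1(w_{ac})=-\zeta_a w_{ac}$. The gap is exactly where you locate it, and the proposed ``second constraint'' does not close it. If you carry out the computation you outline---differentiating $E_c(\lambda_a)=(\lambda_a-\lambda_c)w_{ac}$ along $E_1$, using $[E_1,E_c]=-\zeta_cE_c$, $\zeta_a=\tfrac1{f'}(\rho R+\lambda-\lambda_a)$, the Riccati relation and (\ref{R1ii1})---both sides reduce to
\[
(\zeta_a-\zeta_c)\Bigl[-f''+2f'\zeta_a+\tfrac{(f')^2}{m}\Bigr]\,w_{ac},
\]
so the relation is an identity and imposes no constraint whatsoever on $w_{ac}$. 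All of the equations you have written down are compatible with $w_{ac}$ being an arbitrary solution of $E_1(w_{ac})=-\zeta_a w_{ac}$ along each $E_1$-geodesic, with arbitrary dependence on the tangential variables; nothing so far sees the tangential directions in a way that could force $w_{ac}\equiv 0$.

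The paper avoids this trap by never trying to prove $w_{ac}=0$ directly. Instead it runs an induction on the power sums $p_k=\sum_i\lambda_i^k$: using the Codazzi relation $\nabla_{E_1}R_{ii}=\nabla_{E_i}R_{i1}+\tfrac{R'}{6}$ and $\nabla_{E_i}R_{i1}=\zeta_i(\lambda_1-\lambda_i)$ one writes $E_1(p_k)$ as an expression in $R,R',f',\lambda_1$ and the symmetric functions $p_1,\dots,p_{k+1}$. Since $E_1(p_k)$ is the $s$-derivative of a function already known (by induction) to depend on $s$ only, the formula forces $p_{k+1}$ to depend on $s$ only. Knowing all $p_k=p_k(s)$ determines each $\lambda_i$ as a function of $s$. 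Only after this lemma does the paper conclude $w_{ac}=0$ (Lemma~\ref{77}), via the very identity $E_c(\lambda_a)=(\lambda_a-\lambda_c)w_{ac}$ that you wrote down---so in the paper's logic the vanishing of $w_{ac}$ is a \emph{consequence} of Lemma~\ref{depends}, not a route to it.
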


\begin{proof}
We will show that $\sum_{i=1}^n \lambda_{i}^k$ is a function of $s$ only for all $k$. For $k=1$, it is already proved in Lemma \ref{threesolb}. Suppose
it is true for all $j<k+1$. Then we have
\begin{align*}
  E_1\big\{\sum_{i=1}^n(R_{ii}^k)\big\}=&\sum_{i=1}^nk R_{ii}^{k-1}E_1(R_{ii})\\
  =&\sum_{i=1}^nk R_{ii}^{k-1}\nabla_{E_1}R_{ii}\\
  =&\sum_{i=1}^nk R_{ii}^{k-1}(\nabla_{E_i}R_{i1}+\frac{1}{6}R')\\
  =&k(R_{11}'+\frac{1}{6}R')R_{11}^{k-1}+\sum_{i=2}^n kR_{ii}^{k-1}\{\zeta_i(R_{11}-R_{ii})+\frac{1}{6}R'\}\\
  =&kR_{11}'R_{11}^{k-1}+\frac{kR'}{6}\sum_{i=1}^n R_{ii}^{k-1}\\
    &\qquad\qquad\qquad+\frac{k}{f'}\sum_{i=2}^n R_{ii}^{k-1}(R_{11}-R_{ii})(\rho R+\lambda- R_{ii})
\end{align*}

Due to assumption, every term except $R_{ii}^{k+1}$ in the above equation depends only on $s$.
So $\sum_{i=1}^n R_{ii}^{k+1}$ is also a function of $s$ only. Therefore by the mathematical induction, $\sum_{i=1}^nR_{ii}^k=\sum_{i=1}^n\lambda_i^k$ depends only on $s$ for all $k=1,2,\ldots$.
This implies that each $\lambda_i$, $i=1,\cdots,n$, is a constant depending only on $s$.
\end{proof}

 The following is Lemma 9 in \cite{JS} which is originated from A.Derdzinski's Lemma in \cite{Derd}.
This Lemma enable us to compute Riemannian curvature explicitly.

\begin{lemma} \label{abc60} For a Riemannian metric with harmonic Weyl curvature, consider orthonormal vector fields $E_i$, $i=1, \cdots n$ in an open set such that
$Ric(E_i) = \lambda_i E_i$.  Then $\mathcal{T}(E_i) = (\lambda_i - \frac{R}{2n-2} ) E_i$ and the following holds;

{\rm (i)}
 $(\lambda_j - \lambda_k ) ( \nabla_{E_i} E_j, E_k)   + \nabla_{E_i}(  E_k, \mathcal{T}E_j)=(\lambda_i - \lambda_k ) ( \nabla_{E_j} E_i, E_k) +\nabla_{E_j}(  E_k, \mathcal{T} E_i), \ \ $ for any $i,j,k =1, \cdots n$.

{\rm (ii)}  for  distinct $i,j,k \geq 1$ (with not necessarily distinct $\lambda_i, \lambda_j, \lambda_k$),  it holds that
$ \ \ \ \ (\lambda_j - \lambda_k ) ( \nabla_{E_i} E_j, E_k)=(\lambda_i - \lambda_k ) ( \nabla_{E_j} E_i, E_k) .$

\end{lemma}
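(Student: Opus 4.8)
The plan is to read both identities straight off the Codazzi equation for $\mathcal{T}$. Recall from the discussion preceding the lemma that, under the harmonic Weyl curvature assumption, $\mathcal{T}=Rc-\frac{R}{2n-2}g$ is a Codazzi tensor, i.e. $(\nabla_{E_i}\mathcal{T})(E_j,E_k)=(\nabla_{E_j}\mathcal{T})(E_i,E_k)$ for all $i,j,k$. Since this is a pointwise tensorial identity, it suffices to expand each side in the given orthonormal Ricci-eigen frame and recognize the resulting terms.

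First I would expand the covariant derivative of the $(0,2)$-tensor $\mathcal{T}$ by the Leibniz rule, $(\nabla_{E_i}\mathcal{T})(E_j,E_k)=E_i\big(\mathcal{T}(E_j,E_k)\big)-\mathcal{T}(\nabla_{E_i}E_j,E_k)-\mathcal{T}(E_j,\nabla_{E_i}E_k)$. Because $E_j,E_k$ are eigenvectors of $\mathcal{T}$ with eigenvalues $\lambda_j-\frac{R}{2n-2}$ and $\lambda_k-\frac{R}{2n-2}$, the two correction terms equal $-(\lambda_k-\frac{R}{2n-2})(\nabla_{E_i}E_j,E_k)$ and $-(\lambda_j-\frac{R}{2n-2})(\nabla_{E_i}E_k,E_j)$. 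The one computation worth naming is the following: differentiating $(E_j,E_k)=\delta_{jk}$ gives the antisymmetry $(\nabla_{E_i}E_k,E_j)=-(\nabla_{E_i}E_j,E_k)$, so the two $\frac{R}{2n-2}$ contributions cancel and the surviving eigenvalue terms combine to $(\lambda_j-\lambda_k)(\nabla_{E_i}E_j,E_k)$. Writing the scalar $\mathcal{T}(E_j,E_k)=(E_k,\mathcal{T}E_j)$ and noting that $E_i$ acts on a function as $\nabla_{E_i}$, I obtain $(\nabla_{E_i}\mathcal{T})(E_j,E_k)=\nabla_{E_i}(E_k,\mathcal{T}E_j)+(\lambda_j-\lambda_k)(\nabla_{E_i}E_j,E_k)$. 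Equating this with the same expression after interchanging $i$ and $j$, which the Codazzi property permits, reproduces statement (i) verbatim.

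For (ii) I would specialize (i) to distinct indices $i,j,k$. Then $(E_k,\mathcal{T}E_j)=(\lambda_j-\frac{R}{2n-2})\delta_{jk}=0$ and likewise $(E_k,\mathcal{T}E_i)=0$ hold identically as functions, so the scalar-derivative terms $\nabla_{E_i}(E_k,\mathcal{T}E_j)$ and $\nabla_{E_j}(E_k,\mathcal{T}E_i)$ both vanish; dropping them from (i) leaves exactly $(\lambda_j-\lambda_k)(\nabla_{E_i}E_j,E_k)=(\lambda_i-\lambda_k)(\nabla_{E_j}E_i,E_k)$. Since the whole argument is a direct unwinding of a known Codazzi identity (Derdzinski's lemma), I expect no genuine obstacle; the only delicate point is the bookkeeping of the antisymmetric connection coefficients and the cancellation of the $\frac{R}{2n-2}$ terms, which is precisely what renders the scalar-curvature shift in the definition of $\mathcal{T}$ harmless for these relations.
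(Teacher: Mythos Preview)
Your argument is correct: expanding the Codazzi identity $(\nabla_{E_i}\mathcal{T})(E_j,E_k)=(\nabla_{E_j}\mathcal{T})(E_i,E_k)$ via the Leibniz rule, using that each $E_\ell$ is a $\mathcal{T}$-eigenvector, and invoking the orthonormal antisymmetry $(\nabla_{E_i}E_k,E_j)=-(\nabla_{E_i}E_j,E_k)$ to cancel the $\tfrac{R}{2n-2}$ terms yields (i), and (ii) follows by specializing to distinct indices so that the scalar $(E_k,\mathcal{T}E_j)$ vanishes identically.

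As for comparison: the paper does not supply its own proof of this lemma but records it as Lemma~9 of \cite{JS}, tracing back to Derdzi\'nski \cite{Derd}. Your derivation is exactly the standard unwinding of the Codazzi condition that those references carry out, so there is no substantive difference in approach.
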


\begin{lemma}\label{77}
Let $(M^4,g,f)$ be a $(m,\rho)$-quasi-Einstein manifold with harmonic Weyl curvature, $\rho\neq\frac{1}{6}$ and $m\neq-2$. Consider an adapted frame fields $E_j$, $j=1,2,3,4$,
in an open subset $O$ of $M_{\mathcal{T}}\cap\{\nabla f\neq 0\}$. Then the following hold in $O$;

{\rm (i)}If the eigenfunctions $\lambda_2,\lambda_3,\lambda_4$ are distinct from each other, then

$\nabla_{E_i}E_i=-\zeta_i E_1$ and $\nabla_{E_1}E_i=0$.

{\rm (ii)}If $ \lambda_2  \neq \lambda_3=  \lambda_4$, then

$\nabla_{E_2}  E_2 = -\zeta_2(s) E_1  ,\nabla_{E_3}  E_3 = -\zeta_3  E_1 -  \beta_3  E_4  , \nabla_{E_4}  E_4 = -\zeta_4  E_1 + \beta_4 E_3$

$\nabla_{E_1}  E_2=\nabla_{E_3}  E_2=\nabla_{E_4}  E_2=0$,$\nabla_{E_3}  E_4 = \beta_3 E_3, \nabla_{E_4}  E_3 = - \beta_4 E_4 $

$ \nabla_{E_1}  E_3= q E_4, \nabla_{E_1}  E_4= -q E_3,\nabla_{E_2}  E_3=   r E_4 , \nabla_{E_2}  E_4=  - r E_3$

\end{lemma}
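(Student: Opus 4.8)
The plan is to compute every connection coefficient $\Gamma_{ij}^k := \langle \nabla_{E_i}E_j, E_k\rangle$ of the adapted frame, starting from the data already under control and then using both parts of Lemma \ref{abc60} to eliminate the unknowns. Two structural facts drive everything. First, metric compatibility with the orthonormal frame gives the antisymmetry $\Gamma_{ij}^k = -\Gamma_{ik}^j$, so in particular $\Gamma_{ij}^i = 0$. Second, the $(m,\rho)$-quasi-Einstein equation together with Lemma \ref{threesolb} already pins down every coefficient carrying an $E_1$ in the last slot: from $\nabla_{E_i}E_1 = \zeta_i E_i$ ($i>1$) and $\nabla_{E_1}E_1 = 0$ one reads off, via antisymmetry, $\Gamma_{ii}^1 = -\zeta_i$, $\Gamma_{ij}^1 = 0$ for $i\neq j$ with $i,j>1$, and $\Gamma_{1i}^1 = 0$. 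It thus remains only to control $\Gamma_{ij}^k$ with $k>1$. For this I will use the key consequence of Lemma \ref{depends}: the $\mathcal{T}$-eigenvalue $\tau_i := \lambda_i - \frac{R}{2(n-1)}$ is a function of $s$ alone, so $E_k(\tau_i) = \tau_i'(s)\,E_k(s) = 0$ for every $k>1$, since $E_k \perp E_1 = \nabla s$.

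For part (i), assume $\lambda_2,\lambda_3,\lambda_4$ mutually distinct. To get $\nabla_{E_1}E_i = 0$ it suffices to kill $\Gamma_{1i}^k$ for distinct $i,k\in\{2,3,4\}$; applying Lemma \ref{abc60}(ii) to the triple $(1,i,k)$ gives $(\lambda_i-\lambda_k)\Gamma_{1i}^k = (\lambda_1-\lambda_k)\Gamma_{i1}^k = 0$, because $\Gamma_{i1}^k = \zeta_i\langle E_i,E_k\rangle = 0$, and $\lambda_i\neq\lambda_k$ forces $\Gamma_{1i}^k = 0$. To get $\nabla_{E_i}E_i = -\zeta_i E_1$, I must show $\Gamma_{ii}^k = -\Gamma_{ik}^i$ vanishes for $k\in\{2,3,4\}\setminus\{i\}$. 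Here the algebraic relation (ii) does not apply (the index pattern is not fully off-diagonal), so I turn to the differential identity Lemma \ref{abc60}(i) specialized to the triple $(i,k,i)$: the right-hand coefficient $\lambda_i-\lambda_i$ vanishes, the term $E_i\langle E_i,\mathcal{T}E_k\rangle$ vanishes since $\langle E_i,\mathcal{T}E_k\rangle\equiv 0$, and the last term is $E_k\langle E_i,\mathcal{T}E_i\rangle = E_k(\tau_i) = 0$ since $\tau_i$ depends only on $s$. What remains is $(\lambda_k-\lambda_i)\Gamma_{ik}^i = 0$, whence $\Gamma_{ik}^i = 0$.

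For part (ii), suppose $\lambda_2\neq\lambda_3=\lambda_4$. The strategy is to prove that the entire ``$E_2$-block'' of the connection is rigid exactly as in (i), while the remaining freedom lives inside the degenerate eigenspace $\mathrm{span}\{E_3,E_4\}$ and is simply named. First, $\nabla_{E_2}E_2 = -\zeta_2 E_1$ and $\nabla_{E_1}E_2 = 0$ follow verbatim from (i), which used only $\lambda_2\neq\lambda_3,\lambda_4$. Next I establish $\nabla_{E_3}E_2 = \nabla_{E_4}E_2 = 0$: the fully off-diagonal parts $\Gamma_{32}^4,\Gamma_{42}^3$ die by Lemma \ref{abc60}(ii), now using $\lambda_3=\lambda_4$ to make the right-hand side $(\lambda_3-\lambda_4)\Gamma_{23}^4 = 0$, while $\Gamma_{32}^3,\Gamma_{42}^4$ die by the identity (i) specialized to $(3,2,3)$ and $(4,2,4)$, where the inhomogeneous term is $E_2(\tau_3)=0$ resp. $E_2(\tau_4)=0$. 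With $\nabla_{E_1}E_2,\nabla_{E_3}E_2,\nabla_{E_4}E_2$ all zero, every remaining assertion is bookkeeping through antisymmetry: e.g. $\langle\nabla_{E_3}E_3,E_2\rangle = -\langle E_3,\nabla_{E_3}E_2\rangle = 0$, $\langle\nabla_{E_3}E_4,E_2\rangle = -\langle E_4,\nabla_{E_3}E_2\rangle = 0$, $\langle\nabla_{E_1}E_3,E_2\rangle = -\langle E_3,\nabla_{E_1}E_2\rangle = 0$, and similarly the $E_2$-components of $\nabla_{E_4}E_4,\nabla_{E_4}E_3,\nabla_{E_1}E_4,\nabla_{E_2}E_3,\nabla_{E_2}E_4$ all vanish. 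Setting $\beta_3 := \Gamma_{34}^3$, $\beta_4 := \Gamma_{44}^3$, $q := \Gamma_{13}^4$, $r := \Gamma_{23}^4$ and reading off the forced components yields exactly the listed formulas.

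The only genuine subtlety, and the reason both halves of Lemma \ref{abc60} are needed, is the interplay of the two vanishing mechanisms: whenever two of the three indices carry distinct eigenvalues the purely algebraic relation (ii) applies, but coefficients of the shape $\Gamma_{ii}^k$ (equivalently $\Gamma_{ik}^i$) never fit that pattern and can be reached only through the Codazzi identity (i), where it is essential that the inhomogeneous $\mathcal{T}$-term collapses to $E_k(\tau_i)$ and dies by Lemma \ref{depends}. In the degenerate case one must also resist over-determining the system: the coefficients $\beta_3,\beta_4,q,r$ internal to $\mathrm{span}\{E_3,E_4\}$ genuinely survive (the eigenframe there is unique only up to rotation), and the content of the lemma is precisely that all cross-terms linking this eigenspace to $E_2$ vanish. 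I expect the bookkeeping to be routine once $\nabla_{E_1}E_2,\nabla_{E_3}E_2,\nabla_{E_4}E_2$ are shown to vanish; the one thing to get right is the correct specialization of indices in (i) so that an eigenvalue coefficient drops out, leaving a term annihilated by the $s$-dependence of the eigenvalues. Notably, no hypothesis relating $\lambda_1$ to the other eigenvalues enters anywhere, consistent with the statement.
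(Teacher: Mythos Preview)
Your argument is correct and is exactly the ``direct computation from Lemma \ref{abc60}'' that the paper invokes without writing out; you have simply supplied the details, correctly separating the roles of part (ii) (for fully off-diagonal coefficients) and part (i) (for the $\Gamma_{ik}^i$ coefficients, where the inhomogeneous term $E_k(\tau_i)$ dies by Lemma \ref{depends}). One small slip: the antisymmetry $\Gamma_{ij}^k=-\Gamma_{ik}^j$ yields $\Gamma_{ij}^j=0$, not $\Gamma_{ij}^i=0$ (indeed $\Gamma_{34}^3=\beta_3$ is generally nonzero in case (ii)), but you never actually use the mis-stated version, so the proof stands.
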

\begin{proof}
  It follows from Lemma \ref{abc60} and direct computation.
\end{proof}

\section{$(m,\rho)$-quasi-Einstein manifolds with distinct  $\lambda_2, \lambda_3, \lambda_4$}

\quad

 As mentioned in an introduction, we consider three cases depending on the distinctness of Ricci eigenvalues.  In this section we shall study the first case that all $\lambda_i$ are pairwise distinct. We will prove that if $\lambda_i$, $i=2,3,4$ are mutually different then $g$ should be an Einstein metric, by showing $f'=0$.

Note that from Lemma \ref{abc60}, $\Gamma_{jk}^i:=<\nabla_{E_j}E_k,E_i>$, $i,j,k=2,3,4$ is a function of $s$ only. So from the above Lemma \ref{77}, we may write
\begin{equation} \label{coeff01}
[E_2, E_3] = \alpha E_4, \ \ \  [E_3, E_4] = \beta E_2,  \ \ \ [E_4, E_2] = \gamma E_3.
\end{equation}

Then from the Jacobi identity of Lie bracket $[[E_1,E_2],E_3]+[[E_2,E_3],E_1]+[[E_3,E_1],E_2]=0$, we have the following relation.
\begin{align}
&\alpha^{'} = \alpha ( \zeta_4 -\zeta_2-\zeta_3   ),  \ \    \beta^{'} = \beta ( \zeta_2-\zeta_3-\zeta_4    ), \ \   \gamma^{'} = \gamma ( \zeta_3-\zeta_2-\zeta_4  ) \label{alphaprime} \\
&\beta =   \frac{(\zeta_3 - \zeta_4   )^2}{(\zeta_2 - \zeta_3   )^2} \alpha,   \ \      \gamma =   \frac{(\zeta_2 - \zeta_4   )^2}{(\zeta_2 - \zeta_3   )^2} \alpha. \hspace{3cm} \nonumber
\end{align}
Now we compute Riemannian curvatures.
\begin{align*}
\textrm{For }i,j,k>1\\
R_{1ii1} =&-\zeta_i^{'}  -  \zeta_i^2\\
R_{jiij} =& -\zeta_j \zeta_i  + \Gamma_{ij}^k\Gamma_{ji}^k-\Gamma_{jk}^i\Gamma_{kj}^i-\Gamma_{ik}^j\Gamma_{ki}^j\\
R_{1ij1}=&  R_{ijj1}=R_{1234}=0\\
R_{11} = &-\zeta_2'-\zeta_2^2-\zeta_3'-\zeta_3^2-\zeta_4'-\zeta_4^2\\
R_{22} = & -\zeta_2^{'}
 -  \zeta_2^2 -\zeta_2 \zeta_3
 -\zeta_2 \zeta_4  -2 \Gamma_{34}^2 \Gamma_{43}^2 \\
\end{align*}
\begin{align*}
R_{33} =& -\zeta_3^{'}
 -  \zeta_3^2 -\zeta_3 \zeta_2
 -\zeta_3 \zeta_4  +2 \frac{( \zeta_2 - \zeta_4 )}{ \zeta_3 - \zeta_4   }\Gamma_{34}^2 \Gamma_{43}^2 \\
R_{44} =& -\zeta_4^{'}
 -  \zeta_4^2 -\zeta_4 \zeta_2
 -\zeta_4 \zeta_3  +2\frac{( \zeta_2 - \zeta_3 )}{ \zeta_4 - \zeta_3   } \Gamma_{34}^2 \Gamma_{43}^2 \\
\end{align*}

We will express $f'$ into $\zeta_i$ to show that $f'=0$. From  $\zeta_i=\frac{1}{f'}(\rho R+\lambda-R_{ii})$, for $i>1$, we get
\begin{align*}
  f'=&\frac{R_{22}-R_{33}}{\zeta_2-\zeta_3}=-\frac{\zeta_2'-\zeta_3'}{\zeta_2-\zeta_3}-(\zeta_2+\zeta_3+\zeta_4)-2\Gamma_{34}^2\Gamma_{43}^2\frac{\zeta_2+\zeta_3-2\zeta_4}
{(\zeta_3-\zeta_4)(\zeta_2-\zeta_3)}\\
=&\frac{R_{22}-R_{44}}{\zeta_2-\zeta_4}=-\frac{\zeta_2'-\zeta_4'}{\zeta_2-\zeta_4}-(\zeta_2+\zeta_3+\zeta_4)-2\Gamma_{34}^2\Gamma_{43}^2\frac{\zeta_2+\zeta_4-2\zeta_3}
{(\zeta_4-\zeta_3)(\zeta_2-\zeta_4)}
\end{align*}
Hence, we have
\begin{equation}
  \Gamma_{34}^2\Gamma_{43}^2=\frac{(\zeta_2-\zeta_3)(\zeta_3-\zeta_4)(\zeta_4-\zeta_2)}{4
(\zeta_2^2+\zeta_3^2+\zeta_4^2-\zeta_2\zeta_3-\zeta_2\zeta_4-\zeta_3\zeta_4)}\Big(\frac{\zeta_2'-\zeta_3'}{\zeta_2-\zeta_3}-\frac{\zeta_2'-\zeta_4'}
{\zeta_2-\zeta_4}\Big)\label{expgamma}
\end{equation}
We also have $ m(R_{1ii1}-R_{1jj1})=R_{jj}-R_{ii}$     from (\ref{R1ii1}). Thus,
\begin{eqnarray*}
  \frac{\zeta_2'-\zeta_3'}{\zeta_2-\zeta_3}=-(\zeta_2+\zeta_3)-\frac{\zeta_4}{m+1}-2\frac{\Gamma_{34}^2\Gamma_{43}^2}{(m+1)}\frac{(\zeta_2+\zeta_3-2\zeta_4)}
{(\zeta_3-\zeta_4)(\zeta_2-\zeta_3)}\\
  \frac{\zeta_2'-\zeta_4'}{\zeta_2-\zeta_4}=-(\zeta_2+\zeta_4)-\frac{\zeta_3}{m+1}-2\frac{\Gamma_{34}^2\Gamma_{43}^2}{(m+1)}\frac{(\zeta_2+\zeta_4-2\zeta_3)}
{(\zeta_4-\zeta_3)(\zeta_2-\zeta_4)}\\
\frac{\zeta_3'-\zeta_4'}{\zeta_3-\zeta_4}=-(\zeta_3+\zeta_4)-\frac{\zeta_2}{m+1}-2\frac{\Gamma_{34}^2\Gamma_{43}^2}{m+1}\frac{(\zeta_3+\zeta_4-2\zeta_2)}{(\zeta_3-\zeta_4)^2}
\end{eqnarray*}
Therefore, from (\ref{expgamma}), we get
\begin{eqnarray}
  \Gamma_{34}^2\Gamma_{43}^2=\frac{(\zeta_2-\zeta_3)(\zeta_2-\zeta_4)(\zeta_3-\zeta_4)^2}
{4(\zeta_2^2+\zeta_3^2+\zeta_4^2-\zeta_2\zeta_3-\zeta_3\zeta_4-\zeta_2\zeta_4)} \hspace{2.3cm}\label{expgamma2}\\
\frac{\zeta_i'-\zeta_j'}{\zeta_i-\zeta_j}=-(\zeta_i+\zeta_j)-\frac{\zeta_k}{m+1}-\frac{1}{2(m+1)}\frac{(\zeta_i-\zeta_k)(\zeta_j-\zeta_k)(\zeta_i+\zeta_j-2\zeta_k)}
{(\zeta_2^2+\zeta_3^2+\zeta_4^2-\zeta_2\zeta_3-\zeta_2\zeta_4-\zeta_3\zeta_4)} \nonumber \\
f'=-\frac{\zeta_2^2\zeta_3+\zeta_2\zeta_3^2+\zeta_2^2\zeta_4+\zeta_2\zeta_4^2+\zeta_3^2\zeta_4+\zeta_3\zeta_4^2-6\zeta_2\zeta_3\zeta_4}{2(m+1)
(\zeta_2^2+\zeta_3^2+\zeta_4^2-\zeta_2\zeta_3-\zeta_3\zeta_4-\zeta_4\zeta_2)} \hspace{1.8cm} \label{expofwprime}
 \end{eqnarray}
\begin{prop}\label{1111main}
  Let $(M^4,g,f)$ be a $(m,\rho)$-quasi-Einstein manifold with harmonic Weyl curvature, $\rho\neq\frac{1}{6}$ and $m\neq-2$. For any adapted frame fields $E_j$, $j=1,2,3,4$, in an open subset $O$ of
  $M_{\mathcal{T}}\cap\{\nabla f\neq0\}$ , if the three eigenfunctions $\lambda_2,\lambda_3,\lambda_4$ are pairwise distinct, then $f$ must be a constant function, so
$g$ is an Einstein metric.
\end{prop}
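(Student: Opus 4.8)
The plan is to argue by contradiction inside $O$. There $f' = |\nabla f| > 0$, and since $\lambda_2,\lambda_3,\lambda_4$ are assumed pairwise distinct while $\lambda_i-\lambda_j=-f'(\zeta_i-\zeta_j)$, the three functions $\zeta_2,\zeta_3,\zeta_4$ are pairwise distinct as well (so all the divisions below are legitimate). I will show that the equations force $f'\equiv 0$; this contradicts $f'>0$ on $O$, so such an $O$ cannot exist, and in the distinct-eigenvalue regime $f$ must be constant, reducing (\ref{rhodef}) to $Rc=(\rho R+\lambda)g$, i.e.\ $g$ is Einstein. The one ingredient not yet used in the computations preceding the statement is the radial curvature identity (\ref{R1ii1}), played against the curvature formula $R_{1ii1}=-\zeta_i'-\zeta_i^2$.

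First I would extract a second expression for the logarithmic derivative of the differences $\zeta_i-\zeta_j$. Writing (\ref{R1ii1}) for the indices $i$ and $j$ and subtracting, the term $\frac{R'}{f'}\bigl(\tfrac16-\rho\bigr)$ cancels; using $R_{ii}-\rho R-\lambda=-f'\zeta_i$ this leaves $R_{1ii1}-R_{1jj1}=\frac{f'}{m}(\zeta_i-\zeta_j)$. On the other hand $R_{1ii1}-R_{1jj1}=-(\zeta_i'-\zeta_j')-(\zeta_i-\zeta_j)(\zeta_i+\zeta_j)$, so dividing by $\zeta_i-\zeta_j\neq0$ gives
\[
\frac{\zeta_i'-\zeta_j'}{\zeta_i-\zeta_j}=-(\zeta_i+\zeta_j)-\frac{f'}{m}.
\]
(Equivalently, $\zeta_2,\zeta_3,\zeta_4$ are three solutions of one common Riccati equation $\zeta'=-\zeta^2-\frac{f'}{m}\zeta-\frac{R'}{f'}(\tfrac16-\rho)$, and the difference of two solutions obeys precisely this first-order law.)

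Next I would equate this with the differential equation for $\frac{\zeta_i'-\zeta_j'}{\zeta_i-\zeta_j}$ already derived in the block (\ref{expgamma2}). The common summand $-(\zeta_i+\zeta_j)$ drops out, and writing $D=\zeta_2^2+\zeta_3^2+\zeta_4^2-\zeta_2\zeta_3-\zeta_3\zeta_4-\zeta_4\zeta_2$ I am left with
\[
\frac{f'}{m}=\frac{1}{m+1}\Bigl(\zeta_k+\frac{(\zeta_i-\zeta_k)(\zeta_j-\zeta_k)(\zeta_i+\zeta_j-2\zeta_k)}{2D}\Bigr).
\]
The crux is the polynomial identity $2D\zeta_k+(\zeta_i-\zeta_k)(\zeta_j-\zeta_k)(\zeta_i+\zeta_j-2\zeta_k)=N$, where $N:=\zeta_2^2\zeta_3+\zeta_2\zeta_3^2+\zeta_2^2\zeta_4+\zeta_2\zeta_4^2+\zeta_3^2\zeta_4+\zeta_3\zeta_4^2-6\zeta_2\zeta_3\zeta_4$ is exactly the numerator appearing in (\ref{expofwprime}); this I would verify by direct expansion (and note it is symmetric, so it holds for every choice of $k$). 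It yields $f'=\frac{m}{m+1}\cdot\frac{N}{2D}$, a genuinely new expression for $f'$, because (\ref{R1ii1}) feeds in radial curvature information not present in the derivation of (\ref{expofwprime}).

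Finally, (\ref{expofwprime}) reads $f'=-\frac{N}{2(m+1)D}$, so comparing the two formulas gives $\frac{mN}{2(m+1)D}=-\frac{N}{2(m+1)D}$, that is $(m+1)N=0$. Since $m\neq-1$, we must have $N=0$, whence $f'=0$ by (\ref{expofwprime}) --- the desired contradiction with $f'>0$ on $O$. I expect the polynomial identity for $N$ to be the only place where real computation is needed; every other step is a clean cancellation, and the excluded value $m=-1$ is exactly what makes the final comparison decisive.
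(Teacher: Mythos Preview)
Your argument is circular. The relation you call (1),
\[
\frac{\zeta_i'-\zeta_j'}{\zeta_i-\zeta_j}=-(\zeta_i+\zeta_j)-\frac{f'}{m},
\]
is \emph{not} new information relative to the block (\ref{expgamma2})--(\ref{expofwprime}). Indeed, for each pair $(i,j)$ the paper has two equations: the one coming from $f'(\zeta_i-\zeta_j)=R_{jj}-R_{ii}$ together with the curvature formula for $R_{ii}$ (call it (A)), and the one coming from $m(R_{1ii1}-R_{1jj1})=R_{jj}-R_{ii}$ together with the same curvature formula (call it (B); these are the three displayed equations just before (\ref{expgamma2})). Eliminating the term $\zeta_k+2\Gamma_{34}^2\Gamma_{43}^2\cdot(\text{stuff})$ between (A) and (B) yields exactly your (1). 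Since the second line of (\ref{expgamma2}) and (\ref{expofwprime}) were obtained precisely from the (A)- and (B)-equations (the first by substituting $\Gamma_{34}^2\Gamma_{43}^2$ into (B), the second into (A)), your comparison must be a tautology. And it is: carrying the polynomial identity through carefully gives $f'=\tfrac{mN}{2(m+1)D}$ from \emph{both} routes. The printed formula (\ref{expofwprime}), $f'=-\tfrac{N}{2(m+1)D}$, carries a sign/coefficient slip originating in the line ``$f'=\frac{R_{22}-R_{33}}{\zeta_2-\zeta_3}$'' (the correct relation is $f'=\frac{R_{33}-R_{22}}{\zeta_2-\zeta_3}$), so the apparent discrepancy $(m+1)N=0$ you obtain is an artefact of that misprint, not a genuine constraint. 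In particular, your claim that ``(\ref{R1ii1}) feeds in radial curvature information not present in the derivation of (\ref{expofwprime})'' is false: (\ref{R1ii1}) is exactly what produces the (B)-equations used to get (\ref{expofwprime}).

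What is genuinely missing is an input independent of the (A)/(B) package, and the paper supplies it via the Lie-bracket structure constant $\alpha$: from (\ref{expgamma2}) one has $\alpha^2=\frac{(\zeta_2-\zeta_3)^4}{4D}$, while from the Jacobi identity (\ref{alphaprime}) one has $\alpha'/\alpha=\zeta_4-\zeta_2-\zeta_3$. Differentiating the first expression using the second line of (\ref{expgamma2}) and comparing with the second gives $\frac{m+2}{m+1}N=0$, hence $N=0$ since $m\neq-2$, and then $f'=0$. Your proof never touches (\ref{alphaprime}), and without it the system does not force $N=0$.
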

\begin{proof}
Suppose that $\lambda_2,\lambda_3,\lambda_4$ are pairwise distinct.
 In this proof we set $a= \zeta_2$, $b= \zeta_3$, $c= \zeta_4$ and
$P:= a^2  +b^2+ c^2 - ab  -  bc - ac= \frac{1}{2} \{ (a-b)^2 + (a-c)^2 + (b-c)^2  \} $ for a notational convenience.
From (\ref{expgamma2}) and Lemma \ref{abc60},
$ (\alpha - \gamma + \beta)^2= 4(\Gamma_{34}^2)^2  =\frac{(a - b)^2 (b - c )^2}{ ( a^2  +b^2+ c^2 - ab  -  bc - ac ) }$.
From the equation (\ref{alphaprime}) we have,
\begin{equation*}
(\alpha - \gamma + \beta)^2 =\alpha^2\{1 -  \frac{(a - c   )^2}{(a - b   )^2}  +  \frac{(b - c   )^2}{(a - b   )^2} \}^2  = \frac{4 \alpha^2 (b  - c )^2}{(a - b   )^2}.
 \end{equation*}

So,   $ \alpha^2    = \frac{(a - b)^4 }{ 4P } .$ Since $a,b,c$ are all functions of $s$ only, so is $\alpha$. Differentiating this in $s$ and using the above, we get

 \begin{eqnarray*}
 2 \alpha \alpha^{'}    =  &  \frac{4(a - b)^3 (a^{'} - b^{'}) }{ 4P}  -   \frac{(a - b)^4 ( 2a a^{'}  +2b b^{'} + 2c c^{'}  - a b^{'} - b a^{'} -  a c^{'} -c a^{'} - c b^{'}- b c^{'}  )  }{ 4P^2 } \hspace{1.5cm} \\
 = &  \frac{4(a - b)^3 (a^{'} - b^{'}) }{ 4P}  -   \frac{(a - b)^4 \{ (a -b)(a^{'} - b^{'}) + (a-c)( a^{'} - c^{'})+ (b-c)( b^{'} - c^{'})   \}  }{ 4P^2 } \hspace{1.5cm} \\
  = &  \frac{4(a - b)^4 \frac{(a^{'} - b^{'})}{a-b} }{ 4P}  -   \frac{(a - b)^4 \{ (a -b)^2\frac{(a^{'} - b^{'})}{a-b} + (a-c)^2\frac{(a^{'} - c^{'})}{a-c}+ (b-c)^2\frac{(b^{'} - c^{'})}{b-c}   \}  }{ 4P^2 }. \hspace{1.5cm} \\
 \end{eqnarray*}
Thus,
 \begin{align}
 \frac{8P^2 \alpha \alpha^{'}}{(a-b)^4}
  = &  4 P \frac{(a^{'} - b^{'})}{a-b}   -   \{ (a -b)^2\frac{(a^{'} - b^{'})}{a-b} + (a-c)^2\frac{(a^{'} - c^{'})}{a-c}+ (b-c)^2\frac{(b^{'} - c^{'})}{b-c}   \}\nonumber  . \hspace{1.5cm} \\
=&2P(c-a-b)-\frac{m+2}{m+1}(a^2b+ab^2+a^2c+ac^2+b^2c+bc^2-6abc) \label{alpha221}
 \end{align}

Meanwhile, from $\frac{\alpha^{'}}{\alpha } =  ( c-a-b    )$,

\begin{equation*}
 2 \alpha \alpha^{'}=  2 \alpha^2 \frac{\alpha^{'}}{\alpha } = 2 \alpha^2 ( c-a-b    )=2 \frac{(a - b)^4 }{ 4P }( c-a-b   ).
 \end{equation*}

\begin{equation} \label{ptwo}
\frac{8P^2 \alpha \alpha^{'}}{(a-b)^4}=2P( c-a-b   ).
 \end{equation}

Equating (\ref{alpha221}) and (\ref{ptwo}), we get;
\begin{equation}
  -\frac{m+2}{m+1}(a^2b+ab^2+a^2c+ac^2+b^2c+bc^2-6abc)=0
\end{equation}

From this we get  $6abc = ab^2 + ba^2 + ac^2 + ca^2 + bc^2 + cb^2   $.

Finally from (\ref{expofwprime}), we get $f' =0$. Therefore, $g$ is an Einstein metric.
\end{proof}
\section{$(m,\rho)$-quasi-Einstein manifold with $\lambda_2 \neq \lambda_3 =\lambda_4$. }

\quad In this section we shall study the second case when two of Ricci eigenfunctions are equal. We may well assume that $ \lambda_2  \neq \lambda_3=  \lambda_4 $.

Consider the 2-dimensional distributions $D^1$ and $D^2$ which is spanned by $E_1,E_2$ and $E_3,E_4$, respectively. Then we can show that
there is a coordinate neighborhood $(x_1,x_2,x_3,x_4)$  of each point $p$ such that $D^1$ is tangent to the level sets $\{(x_1,x_2,x_3,x_4)|x_3,x_4\textrm{ constants}\}$ and
$D^2$ is tangent to the level sets $\{(x_1,x_2,x_3,x_4)|x_1,x_2\textrm{ constants}\}$ \cite{JS}. Thus we can get the metric description for $g$ as follows;
\begin{equation}
  g=g_{11}dx_1^2+g_{12}dx_1\odot dx_2+g_{22}dx_2^2+g_{33}dx_3^2+g_{34}dx_3\odot dx_4+g_{44}dx_4^2 \label{gg}
\end{equation}
Now through a couple steps, we will show that $g$ can be written on a neighborhood of each point as
\begin{equation}
  g=ds^2+p^2(s)dt^2+h^2(s)\tilde{g}.  \label{goalofg}
\end{equation}
We refine the argument in \cite{JS}.
\begin{lemma}\label{metric1}
 Suppose $[E_1,E_2]=-\eta(s) E_2$ for some smooth function $\eta(s)$ and $[E_i,E_j]\in D^2$ for $i=3,4$ and $j=1,\cdots,4$. Then the metric $g$ of (\ref{gg}) can be written as
  \begin{equation}
   g=ds^2+p(s)^2dt^2+g_{33}dx_3^2+g_{34}dx_3\odot dx_4+g_{44}dx_4^2 \label{gg2}
  \end{equation}
  where $p(s)$ is a smooth function, $g_{ij}$ are functions of $(x_1,x_2,x_3,x_4)$ and $E_1=\frac{\partial}{\partial s}$, $E_2=\frac{1}{p}\frac{\partial}{\partial t}$.
\end{lemma}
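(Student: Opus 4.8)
The plan is to produce the coordinate $t$ by hand and pair it with the function $s$ already supplied by Lemma \ref{threesolb}, so that the $(E_1,E_2)$-block of $g$ is trivialized. First I would record that $s$ is constant along $D^2$: since $E_1=\nabla s$ and $E_3,E_4\perp E_1$, we have $E_3 s=E_4 s=0$, so $s$ depends only on $(x_1,x_2)$ and descends to the local leaf space of the foliation integrating $D^2$ (integrability being the case $i=3,j=4$ of the hypothesis $[E_i,E_j]\in D^2$). Then I would define the positive function
\[
 p(s):=\exp\Big(\int \eta(s)\,ds\Big),
\]
of the single variable $s$, chosen precisely so that $p'=p\eta$.

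The crux is that $E_1$ and $pE_2$ commute. Since $E_1 s=1$ we have $E_1(p)=p'(s)$, whence
\[
 [E_1,pE_2]=E_1(p)E_2+p[E_1,E_2]=p'E_2-p\eta E_2=(p'-p\eta)E_2=0 .
\]
Moreover the remaining cases $i=3,4,\ j=1,2$ of the hypothesis $[E_i,E_j]\in D^2$ say exactly that $E_1$ and $E_2$ (hence $E_1$ and $pE_2$) are projectable for the $D^2$-foliation, i.e.\ their flows preserve $D^2$. Passing to the $2$-dimensional local leaf space $\bar U$, coordinatized by $(x_1,x_2)$, the fields $E_1$ and $pE_2$ descend to linearly independent vector fields $\bar X,\bar Y$ with $[\bar X,\bar Y]=\overline{[E_1,pE_2]}=0$. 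By the standard flow-box theorem for commuting frames there are coordinates on $\bar U$ with $\bar X=\partial/\partial s$ and $\bar Y=\partial/\partial t$; since $\bar X\bar s=1$ and $\bar Y\bar s=0$ the first of these is the descended $s$, and this defines $t$, which I pull back to a function of $(x_1,x_2)$ on the neighborhood.

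It then remains to check that $(s,t,x_3,x_4)$ is a coordinate system with $E_1=\partial_s$ and $E_2=p^{-1}\partial_t$. This follows from $E_1 s=1,\ E_1 t=0,\ E_1 x_3=E_1 x_4=0$ and $(pE_2)s=0,\ (pE_2)t=1,\ (pE_2)x_3=(pE_2)x_4=0$, where the vanishing on $x_3,x_4$ uses $E_1,E_2\in D^1=\ker dx_3\cap\ker dx_4$. Using orthonormality of $\{E_i\}$ and the block form already established, $g(\partial_s,\partial_s)=1$, $g(\partial_t,\partial_t)=p^2$, $g(\partial_s,\partial_t)=0$, and all cross terms between $D^1$ and $D^2$ vanish because $\partial_s,\partial_t\in D^1\perp D^2$; this yields the asserted form. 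I expect the only real obstacle to be the descent step: one must invoke the projectability hypothesis $[E_i,E_j]\in D^2$ to guarantee that $t$ can be chosen as a genuine function of $(x_1,x_2)$ alone, so that replacing $(x_1,x_2)$ by $(s,t)$ preserves the block splitting, while the choice of $p$ is what turns the orthonormal pair $(E_1,E_2)$ into an (up to scale) coordinate pair by killing the bracket $[E_1,pE_2]$.
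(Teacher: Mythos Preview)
Your argument is correct, and it runs along the dual route to the paper's. The paper works on the cotangent side: it sets $\omega_2:=g(E_2,\cdot)$, defines $p$ exactly as you do so that $\eta=p'/p$, and then verifies directly that the $1$-form $\omega_2/p$ is \emph{closed} by evaluating $d(\omega_2/p)$ on all pairs of frame vectors; the pair $(E_1,E_2)$ uses $[E_1,E_2]=-\eta E_2$, while the pairs involving $E_3,E_4$ use $[E_i,E_j]\in D^2$ together with $dp(E_i)=0$ for $i\ge3$. Poincar\'e's lemma then gives $\omega_2/p=dt$ in one stroke, with no explicit mention of leaf spaces or projectability. You instead work on the tangent side: the same choice of $p$ yields $[E_1,pE_2]=0$, and you produce $t$ by descending $E_1,pE_2$ to the leaf space of $D^2$ and applying the flow-box theorem for commuting frames. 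The two computations are essentially Cartan-formula duals of each other; the paper's version is a bit shorter because closing $\omega_2/p$ absorbs the descent step, whereas yours makes the geometric role of the bracket hypotheses (integrability of $D^2$ and projectability of $E_1,E_2$) more visible.
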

\begin{proof}
  As $E_1=\nabla s$, we have $ds=g(E_1,\cdot)$. Define a 1-form $\omega_2(\cdot):=g(E_2,\cdot)$. Then one can easily check that $ds^2+\omega_2^2=
  g_{11}dx_1^2+g_{12}dx_1\odot dx_2+g_{22} dx_2^2$. Now define a function $p(s):=e^{\int_{s_0}^s \eta(u) du}$ for a constant $s_0$, so that
  $\eta=\frac{p'}{p}$. Then $d(\frac{\omega_2}{p})(E_1,E_2)=0$ and for $i\in\{3,4\}$, $j\in\{1,\cdots,4\}$, $d(\frac{\omega_2}{p})(E_i,E_j)=-\frac{dp\wedge \omega_2}{p^2}(E_i,E_j)+
  \frac{1}{p}d\omega_2(E_i,E_j)=\frac{1}{p}d\omega_2(E_i,E_j)=-\frac{1}{p}\omega_2([E_i,E_j])=0$. Thus, $d(\frac{\omega_2}{p})=0$ and $\frac{\omega_2}{p}=dt$ for some function $t$
  modulo a constant in a neighborhood of $p$. Therefore we can write $g$ as (\ref{gg2}).
\end{proof}

\begin{lemma}\label{metric2}
  Suppose $D^2$ is totally umbilic, $<\nabla_{E_3}E_3,E_1>=-\zeta(s)$ for some smooth function $\zeta(s)$ and $<\nabla_{E_i}E_j,E_2>=0$ for $i,j\in\{3,4\}$. Then the metric $g$ of (\ref{gg2}) can be written as
  \begin{equation}
    ds^2+p^2(s)dt^2+h^2(s)\tilde{g}
  \end{equation}
  where $h(s)$ is a smooth function and $\tilde{g}$ is (a pull-back of) a Riemannian metric on a 2-dimensional domain with $x_3,x_4$ coordinates.
\end{lemma}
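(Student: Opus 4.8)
The plan is to adapt the argument of Lemma \ref{metric1} to the distribution $D^2$, now that we have passed to the coordinates of (\ref{gg2}) in which $D^1$ already splits off orthogonally as $ds^2+p^2(s)\,dt^2$. The key structural input is that under the hypotheses $D^2$ is not merely integrable but \emph{totally umbilic} with a single radial mean-curvature function $\zeta(s)$, and that the second fundamental form of $D^2$ has no $E_2$-component. First I would exploit the block structure already obtained: writing $\omega_3(\cdot):=g(E_3,\cdot)$ and $\omega_4(\cdot):=g(E_4,\cdot)$, the metric is $g=ds^2+p^2dt^2+\omega_3^2+\omega_4^2$, and the last two terms are precisely $g_{33}dx_3^2+g_{34}dx_3\odot dx_4+g_{44}dx_4^2$, a metric living only on the $(x_3,x_4)$-variables once we know the $D^2$-leaves are coordinate leaves. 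So the content to prove is that this $2$-dimensional metric factors as $h^2(s)\tilde g$ with $\tilde g$ independent of $s$ (and of $t$).

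The main step is to define the conformal factor $h(s):=e^{\int_{s_0}^s \zeta(u)\,du}$, so that $\zeta=\frac{h'}{h}$, exactly mirroring the definition of $p$ in Lemma \ref{metric1}, and then to show that the rescaled two-form $\frac{1}{h^2}(\omega_3^2+\omega_4^2)$ has vanishing $s$- and $t$-derivatives, i.e. is a pull-back $\tilde g$ from the $(x_3,x_4)$-domain. Concretely I would compute the Lie derivatives $\mathcal{L}_{E_1}$ and $\mathcal{L}_{E_2}$ of the symmetric tensor $\omega_3\otimes\omega_3+\omega_4\otimes\omega_4$ and check that after dividing by $h^2$ these vanish. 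The total-umbilicity hypothesis $\langle\nabla_{E_3}E_3,E_1\rangle=\langle\nabla_{E_4}E_4,E_1\rangle=-\zeta$ (together with $\langle\nabla_{E_3}E_4,E_1\rangle=\langle\nabla_{E_4}E_3,E_1\rangle=0$, which umbilicity forces) is exactly what makes the $E_1$-derivative of the $D^2$-metric equal to $2\zeta$ times the metric, giving the factor $h^2$; and the hypothesis $\langle\nabla_{E_i}E_j,E_2\rangle=0$ for $i,j\in\{3,4\}$ kills the $E_2$-derivative, so there is no $t$-dependence either. The bracket relations from Lemma \ref{77}(ii) (namely that $[E_i,E_j]\in D^2$ for $i\in\{3,4\}$) guarantee that $D^2$ is integrable and that its leaves can be taken as the coordinate surfaces $\{x_1,x_2=\text{const}\}$ already fixed in (\ref{gg}).

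Carrying this out, for $j\in\{3,4\}$ one has $\langle\nabla_{E_1}E_j,E_1\rangle=0$ and, from umbilicity, the flow of $E_1$ scales each $\omega_j$ by $h$ up to a rotation in the $(E_3,E_4)$-plane (the rotation coming from the connection terms $\nabla_{E_1}E_3=qE_4$ of Lemma \ref{77}(ii)); since a rotation preserves $\omega_3^2+\omega_4^2$, the quadratic form $\omega_3^2+\omega_4^2$ itself scales cleanly by $h^2$, and similarly the $E_2$-flow preserves it by the $E_2$-orthogonality hypothesis. Hence $\frac{1}{h^2}(\omega_3^2+\omega_4^2)$ is invariant under both $E_1$ and $E_2$, so its coefficients depend only on $x_3,x_4$; calling this $\tilde g$ yields $g=ds^2+p^2(s)\,dt^2+h^2(s)\tilde g$.

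I expect the main obstacle to be handling the rotational connection terms $q$ and $r$ (from $\nabla_{E_1}E_3=qE_4$, $\nabla_{E_2}E_3=rE_4$ in Lemma \ref{77}(ii)) cleanly: these mean that $\omega_3$ and $\omega_4$ individually are \emph{not} conformally $s$-independent, only their sum of squares is, so one cannot argue one-form by one-form as in Lemma \ref{metric1} but must work with the $O(2)$-invariant combination $\omega_3^2+\omega_4^2$ throughout. The care needed is to verify that $\mathcal{L}_{E_1}(\omega_3^2+\omega_4^2)=2\zeta(\omega_3^2+\omega_4^2)$ and $\mathcal{L}_{E_2}(\omega_3^2+\omega_4^2)=0$ exactly, with the off-diagonal $q$- and $r$-terms cancelling by antisymmetry; once that is confirmed the conclusion follows as above.
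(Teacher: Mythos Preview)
Your approach is correct and yields the same conclusion, but the route differs from the paper's. The paper works directly in the coordinates $(s,t,x_3,x_4)$ of (\ref{gg2}) and computes with the coordinate metric components $g_{ij}$, $i,j\in\{3,4\}$: using that $\partial_3,\partial_4$ are $D^2$-tangent, the hypothesis $\langle\nabla_{E_i}E_j,E_2\rangle=0$ translates via a Christoffel-symbol computation to $\partial_t g_{ij}=0$, and the umbilicity hypothesis gives $\partial_s g_{ij}=2\zeta(s)\,g_{ij}$, whence $g_{ij}=h(s)^2 e^{C_{ij}(x_3,x_4)}$. Your argument instead stays in the moving frame, packaging the $D^2$-metric as the $O(2)$-invariant form $\omega_3^2+\omega_4^2$ and showing its Lie derivatives along $E_1$ and $E_2$ are $2\zeta$ times itself and zero respectively, with the rotation terms $q,r$ cancelling by antisymmetry. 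The paper's computation is shorter and avoids having to track the $q,r$ rotation explicitly (the coordinate fields $\partial_3,\partial_4$ absorb it automatically), while your version makes transparent \emph{why} the individual one-forms $\omega_3,\omega_4$ need not descend but their sum of squares does. Both are sound; yours mirrors Lemma~\ref{metric1} more closely in spirit, while the paper opts for the most direct coordinate calculation.
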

\begin{proof}
  We use coordinates $(s,t,x_3,x_4)$ so that $\partial_1=\frac{\partial}{\partial s}$ and $\partial_2=\frac{\partial}{\partial t}$. Since $\partial_3$ and $\partial_4$ are both
  of the form $\gamma E_3+\delta E_4$, we have that $<\nabla_{\partial_i}\partial_j,\partial_2>=0$ for $i,j\in\{3,4\}$ by our assumption. Then for $i,j\in\{3,4\}$, $0=<\nabla_{\partial_i}\partial_j,\partial_2>=<\Gamma_{ij}^k\partial_k, \partial_2>=-\frac{1}{2}\partial_2 g_{ij}=-\frac{1}{2}\frac{\partial g_{ij}}{\partial t}$.

  Now we consider the second fundamental form of a leaf for $D^2$ with respect to $E_1$; $H^{E_1}(u,u)=-<\nabla_u u,E_1>$. As $D^2$ is totally umbilic,
  $H^{E_1}(u,u)=\phi g(u,u)$ for some function $\phi$ and any $u$ tangent to $D^2$. Then, $H^{E_1}(E_3,E_3)=-<\nabla_{E_3}E_3,E_1>=\zeta$ so $\phi=\zeta$ which is a function
  of $s$ only. For $i,j\in\{3,4\}$, $ \zeta g_{ij}=H^{E_1}g(\partial_i,\partial_j)=-<\nabla_{\partial_i}\partial_j,\frac{\partial}{\partial s}>=-<\Gamma_{ij}^k\partial_k,\frac{\partial}{\partial s}>=\frac{1}{2}\frac{\partial}{\partial s}g_{ij}$. Thus for $i,j\in\{3,4\}$, we get $g_{ij}=e^{C_{ij}}h(s)^2$.
  Here the function $h(s)>0$ depends only on $s$ and for each function $C_{ij}$ depends only on $x_3,x_4$.
\end{proof}

Due to Lemma \ref{77}, our adapted frame field $\{E_i\}$ satisfy the assumptions of Lemma \ref{metric1} and \ref{metric2}. So we proved that our $g$ can be written as $(\ref{goalofg})$.
 
 \bigskip
 It is already known that if the metric $g=ds^2+p(s)^2dt^2+h(s)^2\tilde{g}$ satisfies the harmonic Weyl curvature condition, then the two dimensional
metric $\tilde{g}$ has constant curvature, say $k$ (see e.g \cite{Derd}). Now we can set $E_1 = \frac{\partial }{\partial  s}$, $E_2 = \frac{1}{p(s)}\frac{\partial }{ \partial  t}$,  $E_3 =  \frac{1}{h(s)} e_3$ and $E_4 =  \frac{1}{h(s)} e_4$, where $e_3$ and $e_4$ are orthonormal frame fields of the Riemannian metric $ \tilde{g}$ on a domain in the ($x_3, x_4$)-plane. And we can compute the coefficients $\zeta_i, q, r   $ in the formula of Lemma \ref{77} by computing covariant derivative of the metric $g=ds^2+p(s)^2dt^2+h(s)^2\tilde{g}$. Then we get $\zeta_2=  \frac{p^{'}}{p}$, $\zeta_3=\zeta_4=  \frac{h^{'}}{h}$ and $q=r=0$. Now we can compute curvature components from
the coefficients.

\begin{align*}
&  R_{1ii1}=-\zeta_i^{'}  -  \zeta_i^2 ,\textrm{ for }i=2,3,4\\
&R_{2332} = R_{2442} = -\zeta_2 \zeta_3=- \frac{p^{'}}{p} \frac{h^{'}}{h} \\
&R_{3443} =  -\zeta_3\zeta_4+\frac{k}{h^2}=-\zeta_3^2-X,\textrm{ where }X=-\frac{k}{h^2}
\end{align*}
\begin{align}
R_{11}  =& - \zeta_2^{'}-\zeta_2^2 -2\zeta_3^{'}-2\zeta_3^2 \nonumber  \\
R_{22} = & -\zeta_2^{'}-\zeta_2^2 -2\zeta_2 \zeta_3 \nonumber\\
R_{33} =&R_{44} =-\zeta_3^{'}-2\zeta_3^2 -\zeta_2 \zeta_3  -X     \label{riccicurvature}\\
R_{ij} =&0, \quad {\rm  if} \ \ \  i \neq j \nonumber\\
R \ \ =&-2\zeta_2'-4\zeta_3'-2\zeta_2^2-6\zeta_3^2-4\zeta_2\zeta_3-2X \nonumber
\end{align}

Now our goal is finding out what is $\zeta_i$. To do that, we put these expressions of curvatures into harmonic Weyl equation and basic equations derived in section 2.
Then we will get descriptions of $\zeta_i'$ and $X$ containing only $\zeta_2$ and $\zeta_3$.

\begin{lemma}\label{primeexpall}
 If $Q:=\zeta_3(m-1)(4\rho-1)+\zeta_2\{4\rho-1+m(2\rho-1)\}\neq 0$, then  we have the followings

  \begin{align}
 & X=\frac{1}{Q}(\zeta_3-\zeta_2)\Big[\zeta_3^2(m-1)(1-4\rho)-\lambda(m+1)+2\zeta_2\zeta_3\big\{4\rho-1+m(5\rho-1)\big\}\Big]
\label{expofx112}\\
&\zeta_2'=\frac{1}{Q}\Big[\zeta_3\lambda(m-1)+\zeta_2^2\zeta_3\big\{1-2(m+2)\rho\big\}+  \zeta_2^3\big\{1-4\rho  \nonumber\\
&\quad\qquad\qquad\qquad\qquad+m(1-2\rho)\big\}+\zeta_2\big\{\lambda-2\zeta_3^2(m-1)(4\rho-1)\big\}\Big]\label{zeta2prime}\\
&\zeta_3'=\frac{1}{Q}\zeta_3\Big[\lambda m-\zeta_3^2(m-1)(4\rho-1)-\zeta_2^2(1+m-4\rho-2m\rho) \nonumber\\
&\qquad\qquad\qquad\qquad\qquad-\zeta_2\zeta_3\{8\rho-2+m(10\rho-3)\})\Big]
\label{zeta3prime}
\end{align}

\end{lemma}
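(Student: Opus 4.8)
The plan is to substitute the explicit curvature components (\ref{riccicurvature}) into the structural identities of Section~2 and solve the resulting \emph{linear} system for $\zeta_2'$, $\zeta_3'$ and $X$, treating $f'$, $R$ and the quantity $u:=R'/f'$ as auxiliary unknowns to be eliminated. The first step is to use the relation $\zeta_i f'=\rho R+\lambda-R_{ii}$ (the $E_i$-component of (\ref{rhodef})) for $i=2,3$. Since, by (\ref{riccicurvature}), $R_{22}=-\zeta_2'-\zeta_2^2-2\zeta_2\zeta_3$ and $R_{33}=-\zeta_3'-2\zeta_3^2-\zeta_2\zeta_3-X$, these express $\zeta_2'$ and $\zeta_3'$ affinely in $f',R,X$:
\begin{align*}
\zeta_2'&=\zeta_2 f'-\rho R-\lambda-\zeta_2^2-2\zeta_2\zeta_3,\\
\zeta_3'&=\zeta_3 f'-\rho R-\lambda-2\zeta_3^2-\zeta_2\zeta_3-X.
\end{align*}
Hence it suffices to determine $f'$, $R$ and $X$ as functions of $\zeta_2,\zeta_3$.

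Next I would feed the same curvature data into the remaining scalar identities of Section~2. These are: (a) equation (\ref{R1ii1}) with $R_{1ii1}=-\zeta_i'-\zeta_i^2$, written for $i=2$ and $i=3$, which introduces $u=R'/f'$ with coefficient $\tfrac16-\rho$ (here $\tfrac1{2n-2}=\tfrac16$ as $n=4$); (b) the trace identity $R=R_{11}+R_{22}+2R_{33}$ read off from (\ref{riccicurvature}); and (c) the identity for $\nabla_{\partial_1}R$ obtained in the proof of Lemma~\ref{threesolb}, which — using $\nabla_{\partial_1}R=R'/f'$ — expresses $u$ linearly in the Ricci component $R_{11}=-\zeta_2'-\zeta_2^2-2\zeta_3'-2\zeta_3^2$ and $R$. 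After substituting the expressions for $\zeta_2',\zeta_3'$ from the first step, each of these becomes linear in the four quantities $f',R,X,u$ with coefficients polynomial in $\zeta_2,\zeta_3$, so that (a), (b), (c) form a closed $4\times4$ linear system in $(f',R,X,u)$.

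The elimination is the heart of the matter. Subtracting the $i=2$ and $i=3$ versions of (\ref{R1ii1}) cancels $u$ and gives one relation between $f'$ and $\zeta_2'-\zeta_3'$; combined with the difference of the two equations of the first step — equivalently with the identity $m(R_{1221}-R_{1331})=R_{33}-R_{22}$ — this pins down $X$ as an affine function of $f'$. The trace identity (b) then yields $R$ affinely in $f'$ (using $\rho\neq\tfrac16$), and the two equations still carrying $u$, namely one version of (\ref{R1ii1}) and the radial equation (c), each express $u$ affinely in $f'$; equating them produces a single linear equation for $f'$ whose coefficient is proportional to $Q=\zeta_3(m-1)(4\rho-1)+\zeta_2\{4\rho-1+m(2\rho-1)\}$. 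Thus $Q\neq0$ is exactly the nondegeneracy condition allowing one to solve $f'=(\cdots)/Q$; back-substituting to recover $X$ and $R$, and then returning to the formulas for $\zeta_2',\zeta_3'$ of the first step, produces (\ref{expofx112})--(\ref{zeta3prime}).

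The main obstacle is bookkeeping rather than anything conceptual: one must organize the linear algebra carefully enough to see that the common denominator of all three output formulas collapses to precisely $Q$ and that the numerators reduce to the stated polynomials. A useful simplification is the proportionality $3\rho-\tfrac12=-3(\tfrac16-\rho)$, which lets the two $u$-bearing equations be combined without introducing spurious factors; one must also keep the constant $\lambda$ tracked throughout, since it survives in the numerators of (\ref{expofx112}) and (\ref{zeta2prime}).
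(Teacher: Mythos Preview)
Your linear-algebra framework is sound, but the particular fourth equation you chose is not independent of the first three, so the $4\times4$ system you propose is always singular and cannot be solved for $f'$ regardless of whether $Q\neq0$. Concretely: summing (\ref{R1ii1}) over $i=2,3,4$ (recall $\zeta_3=\zeta_4$, so this is $(a,i{=}2)+2\cdot(a,i{=}3)$) gives
\[
R_{11}=\sum_{i\ge2}R_{1ii1}=3u\Big(\tfrac16-\rho\Big)-\frac{1}{m}\big(R-R_{11}-3\rho R-3\lambda\big),
\]
which rearranges to
\[
\Big(3\rho-\tfrac12\Big)u=\Big(\tfrac1m-1\Big)R_{11}+\frac{3}{m}(\rho R+\lambda)-\frac{R}{m}.
\]
This is exactly the radial identity for $\nabla_{\partial_1}R$ from the proof of Lemma~\ref{threesolb} (after converting $|\nabla f|^2R_{11}$ in the coordinate basis to $R(E_1,E_1)$ in the frame basis). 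So your equation (c) is a linear combination of the two equations in (a); together with (b) you have only three independent relations among the four unknowns $(f',R,X,u)$, and eliminating $u$ leaves two equations in $(f',R,X)$ rather than three.

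The paper closes the system by bringing in genuinely new information from the harmonic Weyl condition in the radial direction, namely the Codazzi identities $\nabla_1R_{ii}-\nabla_iR_{1i}=R'/6$, which yield (\ref{double2})--(\ref{double3}). These are second-order in $\zeta_i$; the paper then differentiates the $u$-free combination (\ref{r1ii1}) and substitutes (\ref{double2})--(\ref{double3}) to cancel all $\zeta_i''$ and $R'$, obtaining the first-order relation (\ref{lastone}). That equation is \emph{not} a consequence of (a), (b), and your step-1 relations, and it is precisely the missing third independent equation in $(\zeta_2',\zeta_3',X)$ (equivalently, after your substitutions, in $(f',R,X)$). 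Once you include it, your elimination strategy goes through and the determinant of the $3\times3$ system is indeed proportional to $Q$.
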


\begin{proof}
 To prove this lemma we derive three equations which contain only $\zeta_i,\zeta_i'$ and $X$. First,
from the equation (\ref{R1ii1}),  we get two equations for $i=2,3$.
\begin{eqnarray}
  (m+1)\zeta_2'+(m+1)\zeta_2^2+2\zeta_2\zeta_3+\rho R+\lambda+\frac{R'}{f'}m(\frac{1}{6}-\rho)=0   \label{r1ii1-2}\\
  (m+1)\zeta_3'+(m+2)\zeta_3^2+\zeta_2\zeta_3+X+\rho R+\lambda+\frac{R'}{f'}m(\frac{1}{6}-\rho)=0   \label{r1ii1-3}
\end{eqnarray}
Now subtract (\ref{r1ii1-3}) from (\ref{r1ii1-2}), then we get an equation which does not contain $R'$-term.
\begin{equation}
  (m+1)\zeta_2'+(m+1)\zeta_2^2-(m+1)\zeta_3'-(m+2)\zeta_3^2+\zeta_2\zeta_3-X=0 \label{r1ii1}
\end{equation}
This is the first equation. The second one follows from $\zeta_i=\frac{1}{f'}(\rho R+\lambda-R_{ii})$. We have three equations for $i=1,2,3$.
\begin{eqnarray}
  -\zeta_2'-\zeta_2^2-2\zeta_2\zeta_3+f'\zeta_2=\rho R+\lambda  \label{quasi-Einstein manifold2}\\
  -\zeta_3'-2\zeta_3^2-\zeta_2 \zeta_3-X+f'\zeta_3=\rho R+\lambda   \label{quasi-Einstein manifold3}
\end{eqnarray}

Multiply $\zeta_2$ and $\zeta_3$ to (\ref{quasi-Einstein manifold3}) and (\ref{quasi-Einstein manifold2}) respectively, and subtract each other. Then we obtain the second equation.
\begin{equation}
  (\zeta_2-\zeta_3)\{\rho(2\zeta_2^2+4\zeta_2\zeta_3+6\zeta_3^2+2X+2 \zeta_2'+4 \zeta_3')-\lambda\}=\zeta_2X+\zeta_2\zeta_3'-\zeta_2'\zeta_3 \label{riccicurv}
\end{equation}
To get the last equation, we need to use harmonic Weyl curvature condition $\nabla_1R_{ii}-\nabla_iR_{1i}=\frac{R'}{6}$.  Putting (\ref{riccicurvature}) into this equation, we get
\begin{eqnarray}
  -\zeta_2''=2\zeta_2'\zeta_2+2\zeta_2'\zeta_3+2\zeta_2^2\zeta_3-2\zeta_2\zeta_3^2+\frac{R'}{6}    \label{double2}\\
  -\zeta_3''=3\zeta_3'\zeta_3+\zeta_2\zeta_3'+\zeta_2\zeta_3^2-\zeta_2^2\zeta_3-\zeta_3X+\frac{R'}{6}    \label{double3}
\end{eqnarray}

Now differentiate (\ref{r1ii1}), and using (\ref{double2}) and (\ref{double3}), eliminate double prime terms. Then we can obtain the last one.
\begin{equation}
  (2m+1)\zeta_2'\zeta_3-(m+2)\zeta_2\zeta_3'-(m-1)\zeta_3(\zeta_3'-X)+3(m+1)\zeta_2\zeta_3(\zeta_2-\zeta_3)=0 \label{lastone}
\end{equation}

Now if we assume $\zeta_3(m+1)(4\rho-1)+\zeta_3\{1-4\rho+m(2\rho-1)\}\neq 0$, then we are able to express $\zeta_2',\zeta_3'$ and $X$ in terms of $\zeta_2,\zeta_3$ by equating (\ref{r1ii1}), (\ref{riccicurv}) and (\ref{lastone}).

 $X=\frac{(\zeta_3-\zeta_2)\Big[\zeta_3^2(m-1)(1-4\rho)-\lambda(m+1)+2\zeta_2\zeta_3\big\{4\rho-1+m(5\rho-1)\big\}\Big]}{\zeta_3(m-1)(4\rho-1)+\zeta_2\{4\rho-1+m(2\rho-1)\}} $

  $\zeta_2'=\frac{\zeta_3\lambda(m-1)+\zeta_2^2\zeta_3\big\{1-2(m+2)\rho\big\}+\zeta_2^3\big\{1-4\rho+m(1-2\rho)\big\}+\zeta_2\big\{\lambda-2\zeta_3^2(m-1)(4\rho-1)\big\}}{\zeta_3
(m-1)(4\rho-1)+\zeta_2\{4\rho-1+m(2\rho-1)\}}$

  $\zeta_3'=\frac{\zeta_3\big[\lambda m-\zeta_3^2(m-1)(4\rho-1)-\zeta_2^2(1+m-4\rho-2m\rho)-\zeta_2\zeta_3\{8\rho-2+m(10\rho-3)\}\big]}{\zeta_3(m-1)(4\rho-1)+\zeta_2\{4\rho-1+m(2\rho-1)\}}$
\end{proof}

 Now as mentioned in the introduction, we will get a number of possible relations between $\zeta_2$ and $\zeta_3$ by using Lemma \ref{primeexpall}.
 In the rest of this section, we assume $m\neq\pm1$. When $m=\pm1$, many coefficient in our argument become the zero or the infinity. Thus our method does not work in that case.
\begin{lemma}\label{five}
  Suppose $\rho\notin\{\frac{1}{4},\frac{1}{6}\}$ and $m\notin \{\pm1,-2\}$. Then $\zeta_2$ and $\zeta_3$ satisfy at least one of the followings.
\begin{footnotesize}
\begin{align}
  & {\rm 1)}\quad \zeta_3=0 \label{rhocase2}\\
 & {\rm 2)}\quad 3(4\rho-1)\zeta_2\zeta_3-\lambda=0 \label{rhocase3}\\
 &  {\rm 3)}\quad \lambda(m-1)(3\rho-1)+\zeta_2\zeta_3(m-1)(4\rho-1) +\zeta_2^2(9\rho-2)\{4\rho-1+m(2\rho-1)\}=0 \label{rhocase4}\\
&{\rm 4)}\quad (4\rho-1)(m-1)\zeta_3+\{4\rho-1+m(2\rho-1)\}\zeta_2=0 \label{rhocase5}
\end{align}
\end{footnotesize}
\end{lemma}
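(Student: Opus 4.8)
The plan is to squeeze one further scalar relation out of the system set up in Lemma \ref{primeexpall} and to show that it factors exactly into the four listed alternatives. The first observation is that the standing hypothesis $Q\neq 0$ of Lemma \ref{primeexpall} is precisely the negation of alternative 4): the quantity $Q=\zeta_3(m-1)(4\rho-1)+\zeta_2\{4\rho-1+m(2\rho-1)\}$ is the left-hand side of (\ref{rhocase5}). Hence if $Q\equiv 0$ on the open set we are already in case (\ref{rhocase5}) and there is nothing to prove, so I would assume $Q\neq 0$ and invoke the explicit formulas (\ref{expofx112}), (\ref{zeta2prime}), (\ref{zeta3prime}) for $X,\zeta_2',\zeta_3'$. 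From this moment every quantity in play becomes a rational function of the two variables $\zeta_2,\zeta_3$ with denominator a power of $Q$: the scalar curvature $R$ through the last line of (\ref{riccicurvature}), all first and second $s$-derivatives of $\zeta_2,\zeta_3$ (by the chain rule applied to (\ref{zeta2prime}) and (\ref{zeta3prime})), and, through the component equations $\zeta_i f'=\rho R+\lambda-R_{ii}$ for $i=2,3$, also $f'$ and $\rho R+\lambda$. Note that $\zeta_2-\zeta_3=(\lambda_3-\lambda_2)/f'\neq 0$ since $\lambda_2\neq\lambda_3$, so the divisions involved are legitimate.

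Next I would locate a relation that was not yet fully used. The formulas of Lemma \ref{primeexpall} were obtained from (\ref{r1ii1}), (\ref{riccicurv}) and (\ref{lastone}), and (\ref{r1ii1}) is only the difference of (\ref{r1ii1-2}) and (\ref{r1ii1-3}); thus the single equation (\ref{r1ii1-2}) still carries information that has not been imposed. Substituting into (\ref{r1ii1-2}) the now-known rational expressions for $\zeta_2'$, $\rho R+\lambda$, $f'$ and $R'$ (the last obtained by differentiating $R=-2\zeta_2'-4\zeta_3'-2\zeta_2^2-6\zeta_3^2-4\zeta_2\zeta_3-2X$ along the flow) produces a single rational identity in $\zeta_2,\zeta_3$; here the coefficient $m(\tfrac16-\rho)$ is nonzero because $m\neq 0$ and $\rho\neq\tfrac16$. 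Clearing the power of $Q$ in the denominator, which is harmless because $Q\neq 0$, yields a polynomial identity $N(\zeta_2,\zeta_3)=0$ that must hold throughout the set. The assertion to be verified by direct computation is that, up to a nonzero constant depending on $m,\rho$,
\[
N=\zeta_3\cdot\big[3(4\rho-1)\zeta_2\zeta_3-\lambda\big]\cdot\big[\lambda(m-1)(3\rho-1)+\zeta_2\zeta_3(m-1)(4\rho-1)+\zeta_2^2(9\rho-2)\{4\rho-1+m(2\rho-1)\}\big],
\]
i.e. $N$ is the product of the left-hand sides of (\ref{rhocase2}), (\ref{rhocase3}) and (\ref{rhocase4}).

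Granting the factorization, the conclusion is immediate. By the real-analyticity of $g$ and $f$ established earlier, $\zeta_2$ and $\zeta_3$ are real-analytic functions of $s$, so each of the three factors is a real-analytic function of $s$ whose product vanishes identically on a connected neighborhood; hence at least one factor vanishes on a set with an accumulation point and therefore vanishes identically, which is exactly one of (\ref{rhocase2}), (\ref{rhocase3}), (\ref{rhocase4}). Together with the case $Q\equiv0$ treated at the start, this establishes the four alternatives. I expect the factorization step to be the only real difficulty: producing $N$ and checking that its irreducible decomposition is precisely these three pieces, with no extraneous factor, is a long but mechanical symbolic computation. The hypotheses $\rho\notin\{\tfrac14,\tfrac16\}$ and $m\notin\{\pm1,-2\}$ enter here exactly to keep the coefficients $4\rho-1$, $6\rho-1$, $m-1$, $m+1$, $m+2$ that occur in $Q$, in the denominators of Lemma \ref{primeexpall}, and in the factors above from vanishing, so that the factorization is genuine and the list of four cases is exhaustive.
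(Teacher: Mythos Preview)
Your overall strategy---identify an unused relation, substitute the rational expressions from Lemma~\ref{primeexpall}, clear denominators, factor, and invoke real analyticity---is exactly right, and your treatment of alternative~4) via $Q=0$ and of the analyticity argument at the end are both correct. Where you diverge from the paper is in the \emph{choice} of the extra relation.

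The paper does not go back to (\ref{r1ii1-2}). Instead it exploits the purely geometric identity $X' = -2\zeta_3 X$, which follows immediately from $X=-k/h^2$ and $\zeta_3=h'/h$. One computes $X'$ a second time by differentiating the expression (\ref{expofx112}) along the flow (chain rule with (\ref{zeta2prime}), (\ref{zeta3prime})) and sets the two expressions equal. After clearing $Q^3$, the result is displayed explicitly:
\[
2(\zeta_2-\zeta_3)\,\zeta_3\,m(m+1)\,\bigl[3(4\rho-1)\zeta_2\zeta_3-\lambda\bigr]\,\bigl[\lambda(m-1)(3\rho-1)+\zeta_2\zeta_3(m-1)(4\rho-1)+\zeta_2^2(9\rho-2)\{4\rho-1+m(2\rho-1)\}\bigr]=0,
\]
which is precisely $(\zeta_2-\zeta_3)\cdot m(m+1)\cdot(\ref{rhocase2})\cdot(\ref{rhocase3})\cdot(\ref{rhocase4})$. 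This route never needs $R'$ or $f'$ individually, and the factorization is short enough to be written out.

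Your route through (\ref{r1ii1-2}) is considerably heavier: it requires producing $f'$ from the pair (\ref{quasi-Einstein manifold2})--(\ref{quasi-Einstein manifold3}) and producing $R'$ by chain-rule differentiating the full scalar-curvature formula (which in turn involves second derivatives $\zeta_2'',\zeta_3''$ and $X'$ computed again by chain rule). This is a legitimate constraint, but you are stacking several layers of substitution before you ever see a polynomial, and you concede that the factorization is ``to be verified.'' The risk is real: a different extra equation can yield a polynomial with the same mandatory factors but additional spurious ones, or with different overall degree, and you have not checked which occurs here. By contrast, the paper's constraint $X'_{\rm formal}+2\zeta_3 X=0$ is a single rational expression with denominator $Q^3$ and the factorization drops out with no excess factors beyond $(\zeta_2-\zeta_3)$ and the constant $m(m+1)$, both nonzero by hypothesis.
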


\begin{proof}
Suppose $\zeta_i$ satisfy $\zeta_3(m-1)(4\rho-1)+\zeta_2\{4\rho-1+m(2\rho-1)\}\neq 0$. Then we can use the expressions in the above Lemma.
We differentiate (\ref{expofx112}) with the local variable $s$ and get rid of $\zeta_i'$-terms by using (\ref{zeta2prime}) and (\ref{zeta3prime}).
Then we obtain an expression of $X'$ which contains only $\zeta_2$ and $\zeta_3$.
But note that from the definition, $X'=\Big(-\frac{k}{h^2}\Big)'=-2\zeta_3X$. Comparing these two equations of $X'$, we can get
\begin{footnotesize}
\begin{align*}
 & \frac{2(\zeta_2-\zeta_3)\zeta_3m(m+1)\{3\zeta_2\zeta_3(4\rho-1)-\lambda\}}{\{
\zeta_3(m-1)(4\rho-1)+\zeta_2(4\rho-1+m(2\rho-1))\}^3}\Big\{\lambda(m-1)(3\rho-1)\\
&\qquad\qquad\qquad+\zeta_2\zeta_3(m-1)(4\rho-1)+\zeta_2^2(9\rho-2)(4\rho-1+m(2\rho-1))\Big\}=0
\end{align*}
\end{footnotesize}
As mentioned in section 2, $g$ and $f$ are real analytic in harmonic coordinate. Thus $f'=|\nabla f|$ and the Ricci eigenvalues $\lambda_i$ are real analytic in $M_{\mathcal{T}}\cap\{\nabla f\neq 0\}$. Then $\zeta_i=\frac{1}{f'}(\rho R+\lambda-R_{ii})$ is also real analytic. Thus to satisfy the above equation, one of factors of the left side
should be zero. But by our assumption $\zeta_2$ is not equal to $\zeta_3$ and $m$ cannot be $-1$ or $0$.
Therefore we can get 1),2) and 3).
\end{proof}

 We are going to derive a description of the Riemannian metric $g$ for each four cases. For convenience, we first consider $\zeta_2=0$. And then
 we will assume $\zeta_2\neq0$ in other cases.

\begin{lemma}\label{case1}
 If $\zeta_2=0$, $\zeta_3\neq 0$, then $R_{22}=R_{33}$ which is a contradiction.
\end{lemma}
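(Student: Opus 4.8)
The plan is to substitute $\zeta_2 = 0$ into the structural identities derived in Lemma \ref{primeexpall} and the curvature formulae (\ref{riccicurvature}), and to compare the resulting expressions for $R_{22}$ and $R_{33}$. Setting $\zeta_2 = 0$ collapses the quantity $Q$ to $Q = \zeta_3(m-1)(4\rho-1)$, which is nonzero precisely because $\zeta_3 \neq 0$, $m \neq 1$, and $\rho \neq \frac{1}{4}$ (all guaranteed by the hypotheses carried over from Lemma \ref{five}). Hence the formulae for $X$, $\zeta_2'$, and $\zeta_3'$ remain valid, and I would evaluate each of them at $\zeta_2 = 0$ to obtain the considerably simpler one-variable expressions.

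First I would compute $X$ from (\ref{expofx112}) at $\zeta_2 = 0$: the numerator becomes $\zeta_3 \big[\zeta_3^2(m-1)(1-4\rho) - \lambda(m+1)\big]$, so $X = \frac{\zeta_3^2(m-1)(1-4\rho) - \lambda(m+1)}{(m-1)(4\rho-1)}$. Next I would read off $\zeta_2'$ from (\ref{zeta2prime}), whose numerator at $\zeta_2 = 0$ reduces to $\zeta_3\lambda(m-1)$, giving $\zeta_2' = \frac{\lambda}{4\rho-1}$ after dividing by $Q = \zeta_3(m-1)(4\rho-1)$. With these in hand, the Ricci components from (\ref{riccicurvature}) at $\zeta_2 = 0$ become $R_{22} = -\zeta_2'$ and $R_{33} = -\zeta_3' - 2\zeta_3^2 - X$. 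The heart of the computation is to plug in the explicit $\zeta_3'$ from (\ref{zeta3prime}) (whose numerator at $\zeta_2 = 0$ is $\zeta_3\big[\lambda m - \zeta_3^2(m-1)(4\rho-1)\big]$) together with the value of $X$ just found, and simplify $R_{33}$ to a pure function of $\zeta_3$, $\lambda$, $m$, $\rho$.

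The conclusion I expect is that after simplification $R_{33}$ equals $R_{22} = -\frac{\lambda}{4\rho-1}$ as a formal algebraic identity, which is impossible: the relation $R_{22} = R_{33}$ would force $\lambda_2 = R_{22} = R_{33} = \lambda_3$, contradicting the standing assumption $\lambda_2 \neq \lambda_3$ of this section. (One must also keep in mind that, by the equations right before Lemma \ref{depends}, $R_{22} = \lambda_2$ and $R_{33} = \lambda_3$ are the genuine Ricci eigenvalues, so an equality of these curvature components is exactly an equality of eigenvalues.) The only real obstacle is bookkeeping: ensuring that the common denominator $(m-1)(4\rho-1)$ does not vanish — which is exactly where the hypotheses $m \neq 1$ and $\rho \neq \frac{1}{4}$ are used — and carefully collecting the $\zeta_3^2$ and $\lambda$ terms so that the cancellation yielding $R_{33} = R_{22}$ is transparent. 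I would present the final contradiction by exhibiting the two simplified expressions side by side and observing they coincide.
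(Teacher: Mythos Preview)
Your plan contains a genuine computational gap. Carrying out the substitution you describe, one finds from (\ref{zeta3prime}) that $\zeta_3'=\frac{\lambda m}{(m-1)(4\rho-1)}-\zeta_3^2$ and from (\ref{expofx112}) that $X=-\zeta_3^2-\frac{\lambda(m+1)}{(m-1)(4\rho-1)}$, whence
\[
R_{33}=-\zeta_3'-2\zeta_3^2-X=\frac{\lambda}{(m-1)(4\rho-1)},
\]
while your formula gives $R_{22}=-\zeta_2'=-\frac{\lambda}{4\rho-1}$. These two expressions are \emph{not} equal as a formal algebraic identity; they agree only when $\lambda=0$ (recall $m\neq 0$). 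So the ``transparent cancellation'' you anticipate does not occur.

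The missing ingredient is the observation that $\zeta_2$ vanishes \emph{identically} (by real analyticity on the component under consideration), so $\zeta_2'=0$. Combined with your computed value $\zeta_2'=\frac{\lambda}{4\rho-1}$ this forces $\lambda=0$, and then indeed $R_{22}=R_{33}=0$. With that one extra line your argument goes through. The paper's own proof bypasses Lemma~\ref{primeexpall} entirely: it inserts $\zeta_2=\zeta_2'=0$ directly into (\ref{lastone}) to get $\zeta_3'=X$, then into (\ref{r1ii1}) to get $\zeta_3'+\zeta_3^2=0$, so $R_{1221}=R_{1331}=0$ and hence $R_{22}=R_{33}$ via (\ref{R1ii1}). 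This is shorter and avoids the need to track $\lambda$ at all.
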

\begin{proof}
If $\zeta_3$ is also zero, then the metric is an Einstein. Suppose $\zeta_3\neq 0$. Then from (\ref{lastone}), we get $-(m-1)\zeta_3(\zeta_3'-X)=0$. Since $m\neq1$, we obtain $\zeta_3'=X$.
Putting this in (\ref{r1ii1}), then we get $\zeta_3'+\zeta_3^2=0$. Then we have $R_{1221}=R_{1331}=0$ which means that $R_{22}=R_{33}$. Thus this is a contradiction.
\end{proof}

\begin{lemma}\label{112core}
   If $\zeta_3=0$ and $\zeta_2\neq 0$
then for each point $p$ in an open set $O\subset M_A\cap\{\nabla f\neq 0\}$, there exists a neighborhood $V$ of $p$ in $O$ which can be one of the following; Here $C$ is an arbitrary constant.

   {\rm (i)} $(V,g)$ is isometric to a domain in $\mathcal{B}_{\frac{R}{2(m+2)}}^2 \times\mathbb{N}_{\frac{R(m+1)}{2(m+2)}}^2$ with $g=ds^2+\sin^2(\sqrt{\frac{R}{2(m+2)}}s)dt^2+\tilde{g}$, $f=-m\ln(\cos\sqrt{\frac{R}{2(m+2)}}s)+C$ and $\frac{R}{2(m+2)}>0$. Here $s\in(0,\frac{\pi}{2})$ is the distance function on $\mathcal{B}_{\frac{R}{2(m+2)}}^2$ from the north pole and $\tilde{g}$ has constant curvature $\frac{R(m+1)}{2(m+2)}$.

 {\rm (ii)} $(V,g)$ is isometric to a domain in $\mathcal{D}_{\frac{R}{2(m+2)}}^2 \times\mathbb{N}_{\frac{R(m+1)}{2(m+2)}}^2$ with $g=ds^2+\cosh^2(\sqrt{-\frac{R}{2(m+2)}}s)dt^2+\tilde{g}$, $f=-m\ln(-\sinh\sqrt{-\frac{R}{2(m+2)}}s)+C$ and $\frac{R}{2(m+2)}<0$. Here $s\in(-\infty,0)$ is the signed distance function on $\mathcal{D}_{\frac{R}{2(m+2)}}^2$ from the line $\{(s,t)|s=0\}$ and $\tilde{g}$ has constant curvature $\frac{R(m+1)}{2(m+2)}$.

 {\rm (iii)} $(V,g)$ is isometric to a domain in $\mathbb{H}_{\frac{R}{2(m+2)}}^2 \times\mathbb{N}_{\frac{R(m+1)}{2(m+2)}}^2$ with $g=ds^2+\sinh^2(\sqrt{-\frac{R}{2(m+2)}}s)dt^2+\tilde{g}$, $f=-m\ln(\cosh\sqrt{-\frac{R}{2(m+2)}}s)+C$ and $\frac{R}{2(m+2)}<0$. Here $s\in(-\infty,0)$ is the signed distance function on $\mathbb{H}_{\frac{R}{2(m+2)}}^2$ from the point $\{s=0\}$ and $\tilde{g}$ has a constant curvature $\frac{R(m+1)}{2(m+2)}$.

where $R$ is a constant scalar curvature. In particular, we have $R=\frac{-2\lambda(m+2)}{4\rho-1+m(2\rho-1)}$ when $4\rho-1+m(2\rho-1)\neq 0$.
\end{lemma}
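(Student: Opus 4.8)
The plan is to use the hypothesis $\zeta_3=0$ to collapse the warped product into a genuine Riemannian product and then to reduce everything to a single second-order linear ODE. Recall that before this lemma we established $g=ds^2+p(s)^2dt^2+h(s)^2\tilde g$ with $\zeta_2=\frac{p'}{p}$, $\zeta_3=\zeta_4=\frac{h'}{h}$ and $\tilde g$ of constant curvature $k$. Since $\zeta_3=\frac{h'}{h}\equiv 0$, the function $h$ is a constant $h_0$, so $(V,g)$ is the product of the surface $\Sigma:=(ds^2+p^2dt^2)$ with the fixed constant-curvature surface $(h_0^2\tilde g)$, and $X=-\frac{k}{h_0^2}$ is constant. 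Putting $\zeta_3=0$ into (\ref{r1ii1}) gives $X=(m+1)(\zeta_2'+\zeta_2^2)$, and the scalar-curvature line of (\ref{riccicurvature}) gives $R=-2(m+2)(\zeta_2'+\zeta_2^2)$. Hence $\zeta_2'+\zeta_2^2=\frac{X}{m+1}$ is constant, so $R$ is a constant; writing $\kappa:=\frac{R}{2(m+2)}$ we obtain $\zeta_2'+\zeta_2^2=-\kappa$, i.e. $\frac{p''}{p}=-\kappa$, so $p''+\kappa p=0$ and $\Sigma$ has constant Gaussian curvature $\kappa=\frac{R}{2(m+2)}$. Reading $R_{33}=R_{44}=-X=(m+1)\kappa$ from (\ref{riccicurvature}) shows the fiber $h_0^2\tilde g$ has the asserted constant curvature $\frac{R(m+1)}{2(m+2)}$.

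Next I would solve $p''+\kappa p=0$ according to the sign of $\kappa$ (equivalently of $R$). The value $\kappa=0$ is impossible: I will show below that it forces $c_0:=\rho R+\lambda-R_{22}=m\kappa=0$, hence $f'=\frac{c_0}{\zeta_2}=0$, contradicting $\nabla f\neq 0$. For $\kappa>0$ the solutions are trigonometric, and after the normalizations available to us (an additive shift of $s$, which is defined only up to a constant, together with a rescaling of $t$) the surface $\Sigma$ is a rotationally symmetric region of the round sphere $\mathbb{S}^2_\kappa$ with $p=\sin(\sqrt{\kappa}\,s)$; this is case (i). For $\kappa<0$, writing $\mu=\sqrt{-\kappa}$, the solutions are $p=A\cosh(\mu s)+B\sinh(\mu s)$, and I would split according to whether $p$ has a zero in its maximal positivity interval: if it does, $\Sigma$ is $\mathbb{H}^2_\kappa$ in geodesic polar coordinates about that zero with $p=\sinh(\mu s)$, giving case (iii); if it does not, $\Sigma$ is the half-plane $\mathcal{D}^2_\kappa$ in Fermi coordinates about the geodesic $\{s=0\}$ with $p=\cosh(\mu s)$, giving case (ii).

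To pin down $f$, I would use the relation $\zeta_i=\frac{1}{f'}(\rho R+\lambda-R_{ii})$. From $\zeta_3=0$ we get $\rho R+\lambda=R_{33}=(m+1)\kappa$, while $R_{22}=-\zeta_2'-\zeta_2^2=\kappa$; hence $c_0:=\rho R+\lambda-R_{22}=m\kappa$ and $f'=\frac{c_0}{\zeta_2}=m\kappa\,\frac{p}{p'}$. Using $\kappa=-\frac{p''}{p}$ this collapses to the uniform expression $f'=-m\,\frac{p''}{p'}$, so that $f=-m\ln|p'|+C$. Substituting the three solutions for $p$ reproduces exactly $f=-m\ln(\cos\sqrt{\kappa}\,s)+C$, $f=-m\ln(-\sinh\mu s)+C$ and $f=-m\ln(\cosh\mu s)+C$ of cases (i), (ii) and (iii), with the stated ranges of $s$ dictated by requiring $p>0$ and $p'\neq 0$ (equivalently $f$ finite). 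Finally, solving $\rho R+\lambda=(m+1)\kappa=\frac{R(m+1)}{2(m+2)}$ for $R$ gives $R\bigl(\rho-\frac{m+1}{2(m+2)}\bigr)=-\lambda$, i.e. $R=\frac{-2\lambda(m+2)}{4\rho-1+m(2\rho-1)}$ whenever the denominator $4\rho-1+m(2\rho-1)=2\rho(m+2)-(m+1)$ is nonzero.

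I expect the main obstacle to be geometric bookkeeping rather than the ODE itself: verifying that the constant-curvature surface $\Sigma$ is isometrically the precise region claimed (a northern hemisphere, the half hyperbolic plane cut by a geodesic line, or the full hyperbolic plane), correctly identifying the intrinsic coordinate $s=\int \frac{df}{|\nabla f|}$ with the stated distance and signed-distance functions, and organizing the hyperbolic case into the two sub-pictures (ii) and (iii) by the presence or absence of a zero of $p$. Some care is also required to confirm the exclusion of $\kappa=0$ and to absorb the sign ambiguities of $s$ and of $f'=|\nabla f|>0$ into the admissible normalizations before reading off the final forms of $g$ and $f$.
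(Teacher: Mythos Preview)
Your argument is correct and reaches the same conclusion as the paper, but by a somewhat different and in fact cleaner route. The paper combines equation~(\ref{r1ii1}) with equation~(\ref{riccicurv}) (which carries $\rho$ and $\lambda$ explicitly) to obtain $\{4\rho-1+m(2\rho-1)\}(\zeta_2'+\zeta_2^2)=\lambda$, and must then split into the two subcases $4\rho-1+m(2\rho-1)\neq 0$ and $=0$ before solving the ODE with parameter $\Lambda=\frac{-\lambda}{4\rho-1+m(2\rho-1)}$ and computing $f$ case by case from $f'\zeta_2=m\Lambda$. You instead observe at the outset that $\zeta_3=0$ forces $h$ to be constant, hence $X=-k/h^2$ is constant, so (\ref{r1ii1}) alone already gives $\zeta_2'+\zeta_2^2=\frac{X}{m+1}=\text{const}=-\kappa$; this yields the linear equation $p''+\kappa p=0$ and the constancy of $R$ without ever invoking (\ref{riccicurv}) and without any case split on $\rho$ and $m$. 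Your uniform formula $f'=-m\,p''/p'$, i.e.\ $f=-m\ln|p'|+C$, likewise replaces the paper's three separate integrations, and your final identity $\rho R+\lambda=R_{33}=(m+1)\kappa$ recovers the relation $R=\frac{-2\lambda(m+2)}{4\rho-1+m(2\rho-1)}$ as an afterthought rather than as input. The trade-off is that the paper's route makes the dependence on $(\rho,\lambda,m)$ visible earlier, while yours keeps the geometry (constant-curvature product of surfaces) in the foreground; both are valid, and your organization of the hyperbolic case into ``$p$ has a zero'' versus ``$p$ has no zero'' matches the paper's $\coth$/$\tanh$ dichotomy.
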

\begin{proof}
  Suppose $\zeta_2\neq0$. From (\ref{r1ii1}), we have $(m+1)\zeta_2'+(m+1)\zeta_2^2-X=0$. And from (\ref{riccicurv}), we get $2\rho(\zeta_2'+\zeta_2^2)+(2\rho-1)X-\lambda=0$. Equating these two equation, we get
\begin{equation}
  \{4\rho-1+m(2\rho-1)\}(\zeta_2'+\zeta_2^2)-\lambda=0 \label{223}
\end{equation}
If $4\rho-1+m(2\rho-1)\neq0$ and $\lambda=0$, then $g$ is an Einstein metric. If $4\rho-1+m(2\rho-1)\neq0$ and $\lambda\neq0$, then we get
\begin{align*}
  \zeta_2=\left\{
      \begin{array}{ll}
        -\sqrt{\Lambda}\tan(\sqrt{\Lambda}s+C_1), & \hbox{$\Lambda>0$;} \\
        \sqrt{-\Lambda}\tanh(\sqrt{-\Lambda}s+C_2), & \hbox{$\Lambda<0$;} \\
        \sqrt{-\Lambda}\coth(\sqrt{-\Lambda}s+C_3), & \hbox{$\Lambda<0$;}
           \end{array}
    \right.
\end{align*}
where $\Lambda=\frac{-\lambda}{4\rho-1+m(2\rho-1)}$ and $C_i$ are arbitrary constants. For convenience, choose $C_1=-\frac{\pi}{2}$ and $C_2=C_3=0$.  Then we have
\begin{align*}
  \zeta_2=\left\{
      \begin{array}{ll}
        \sqrt{\Lambda}\cot(\sqrt{\Lambda}s), & \hbox{$\Lambda>0$;} \\
        \sqrt{-\Lambda}\tanh(\sqrt{-\Lambda}s), & \hbox{$\Lambda<0$;} \\
        \sqrt{-\Lambda}\coth(\sqrt{-\Lambda}s), & \hbox{$\Lambda<0$;}
           \end{array}
    \right.
\end{align*}
Since we have $f'\zeta_2=m\Lambda$ from (\ref{quasi-Einstein manifold2}), we can obtain $f$ and $g$ as follows
\begin{align*}
  (g,f)=\left\{
          \begin{array}{ll}
           g=ds^2+\sin^2({\sqrt{\Lambda}s})dt^2+\tilde{g},\quad f=-m\ln(\cos\sqrt{\Lambda}s)+C & \hbox{$\Lambda>0$;} \\
            g=ds^2+\cosh^2({\sqrt{-\Lambda}s})dt^2+\tilde{g},\quad f=-m\ln(-\sinh\sqrt{-\Lambda}s)+C, & \hbox{$\Lambda<0$;} \\
            g=ds^2+\sinh^2({\sqrt{-\Lambda}s})dt^2+\tilde{g},\quad f=-m\ln(\cosh\sqrt{-\Lambda}s)+C, & \hbox{$\Lambda<0$.}
          \end{array}
        \right.
\end{align*}
Note that $R_{1221}=\Lambda,R_{3443}=(m+1)\Lambda$ and $R=2(m+2)\Lambda$. The curvatures of the space are constants which depend
on the sign of $\Lambda$. Since we assumed that $f'=|\nabla f|> 0$, if $f=-m\ln(\cos\sqrt{\Lambda}s)+C$,
then $f$ can be defined only on $(0,\frac{\pi}{2})$. Thus from the fact $R_{1221}=\Lambda$, we can say that two dimensional part $ds^2+\sin^2(\sqrt{\Lambda}s)dt^2$ of the metric $g$ is a 2-dimensional disk which is contained in $\mathbb{S}_{\Lambda}$. By the
same argument, we can obtain (ii) and (iii).

 Now suppose $4\rho-1+m(2\rho-1)=0$. Then $\lambda=0$ and $X=-\rho R$. Since $\zeta_3=\frac{h'}{h}$ is a zero, $X=-\frac{k}{h^2}$ is a constant.
Thus $R$ is a constant function. Then we obtain $\zeta_2'+\zeta_2^2+\frac{\rho R}{m+1}=0$ from (\ref{r1ii1-2}). Since we assumed $\rho=\frac{m+1}{2m+4}$, this equation is equal to
$\zeta_2'+\zeta_2^2+\frac{R}{2(m+2)}=0$. But note that we can regard (\ref{223}) as $\zeta_2'+\zeta_2^2+\frac{R}{2(m+2)}=0$. Thus we can obtain general $g$ and $f$ including all cases.
\end{proof}

Since all three cases in Lemma \ref{112core} have constant scalar curvature, they also satisfy harmonic curvature condition $d^{\nabla} Rc=0$.

\begin{lemma}\label{3331}
  If $3(4\rho-1)\zeta_2\zeta_3-\lambda=0$, then $R_{22}=R_{33}$ which is a contradiction.
\end{lemma}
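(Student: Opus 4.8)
The plan is to reduce the claim $R_{22}=R_{33}$ to the single identity $\zeta_2'+\zeta_2^2=\zeta_3'+\zeta_3^2$, and then to verify this identity by feeding the hypothesis $\lambda=3(4\rho-1)\zeta_2\zeta_3$ into the ODE expressions of Lemma~\ref{primeexpall}. First I note that, since we are in case 2) of Lemma~\ref{five}, the quantity $Q=\zeta_3(m-1)(4\rho-1)+\zeta_2\{4\rho-1+m(2\rho-1)\}$ is nonzero (case 4) is precisely $Q=0$), so the formulas (\ref{zeta2prime}) and (\ref{zeta3prime}) for $\zeta_2'$ and $\zeta_3'$ are available.

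For the reduction, I recall from (\ref{riccicurvature}) that $R_{1221}=-\zeta_2'-\zeta_2^2$ and $R_{1331}=-\zeta_3'-\zeta_3^2$, while the relation $m(R_{1ii1}-R_{1jj1})=R_{jj}-R_{ii}$ coming from (\ref{R1ii1}) gives $R_{22}-R_{33}=m(\zeta_2'+\zeta_2^2-\zeta_3'-\zeta_3^2)$. Since $m\neq 0$, it therefore suffices to prove $\zeta_2'+\zeta_2^2=\zeta_3'+\zeta_3^2$; this forces $\lambda_2=R_{22}=R_{33}=\lambda_3$, contradicting the standing assumption $\lambda_2\neq\lambda_3$ of this section.

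The core computation is then purely algebraic. Writing $A:=4\rho-1$ and $B:=4\rho-1+m(2\rho-1)$, so that $Q=(m-1)A\zeta_3+B\zeta_2$, I substitute $\lambda=3A\zeta_2\zeta_3$ into (\ref{zeta2prime}) and (\ref{zeta3prime}) and multiply each by $Q$. Using the simplifications $1-4\rho+m(1-2\rho)=-B$ and $1+m-4\rho-2m\rho=-B$, the right-hand sides become cubic polynomials in $\zeta_2,\zeta_3$. Adding $\zeta_2^2 Q$ to $\zeta_2'Q$ and $\zeta_3^2 Q$ to $\zeta_3'Q$, I find that in both cases the resulting cubic collapses to the same expression $A(m-1)\zeta_2\zeta_3^2+B\zeta_2^2\zeta_3$; the key identities that make this work are $\{1-2(m+2)\rho+3A\}+(m-1)A=B$ on the $\zeta_2'$ side and $\{3Am-(8\rho-2+m(10\rho-3))\}+B=(m-1)A$ on the $\zeta_3'$ side. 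Hence $(\zeta_2'+\zeta_2^2-\zeta_3'-\zeta_3^2)Q=0$, and since $Q\neq0$ we conclude $\zeta_2'+\zeta_2^2=\zeta_3'+\zeta_3^2$.

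The only genuine work is the bookkeeping of the coefficients in $m$ and $\rho$ in the last step; once the two collapsing identities above are checked (this uses $\rho\neq\frac14$ to keep $A\neq0$ where needed, and $m\notin\{0,\pm1,-2\}$ so that $Q$ and the surrounding factors behave), the contradiction is immediate. A slightly shorter alternative would be to differentiate the hypothesis directly: since $\lambda$ is constant and $A\neq0$, one gets $\zeta_2'\zeta_3+\zeta_2\zeta_3'=0$, which combined with (\ref{zeta2prime})--(\ref{zeta3prime}) yields the same identity; but the direct substitution above is cleaner to present and parallels the mechanism already used in Lemma~\ref{case1}.
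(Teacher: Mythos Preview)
Your proof is correct and follows essentially the same route as the paper: substitute the hypothesis $\lambda=3(4\rho-1)\zeta_2\zeta_3$ into the expressions (\ref{zeta2prime}) and (\ref{zeta3prime}) to obtain $\zeta_2'+\zeta_2^2=\zeta_3'+\zeta_3^2$ (indeed both equal $\zeta_2\zeta_3=\tfrac{\lambda}{3(4\rho-1)}$, since your common value $A(m-1)\zeta_2\zeta_3^2+B\zeta_2^2\zeta_3$ is just $\zeta_2\zeta_3\,Q$), and conclude $R_{22}=R_{33}$. You supply more detail than the paper---in particular you make explicit why $Q\neq0$ (case 4) is $Q=0$) and why $\zeta_2'+\zeta_2^2=\zeta_3'+\zeta_3^2$ forces $R_{22}=R_{33}$---but the argument is the same.
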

\begin{proof}
  By assumption, $\rho$ cannot be $\frac{1}{4}$. Putting the relation $\zeta_2\zeta_3=\frac{\lambda}{3(4\rho-1)}$ in (\ref{zeta2prime}) and (\ref{zeta3prime}) respectively, we get
\begin{align*}
  \zeta_2'+\zeta_2^2=\zeta_3'+\zeta_3^2=\frac{\lambda}{3(4\rho-1)}
 \end{align*}
But this means that $R_{22}=R_{33}$ which is a contradiction.
\end{proof}

Note that if $\zeta_2$ and $\zeta_3$ are both nonzero constants, then $R$ and $X$ are also constants. Then we have $2\zeta_2\zeta_3(\zeta_2-\zeta_3)=0$ from (\ref{double2}).
This is a contradiction to the fact that $\zeta_2\neq \zeta_3$. In the next Lemma, we will show that to satisfy (\ref{rhocase4}), $\rho$ must be zero by using this fact.

\begin{lemma}
 If $\zeta_2$ and $\zeta_3$ satisfy (\ref{rhocase4}), then  for each point $p$ in an open set $O\subset M_\mathcal{T}\cap\{\nabla f\neq 0\}$, there exists a neighborhood $V$ of $p$ in $O$ which is isometric to a domain in $\mathbb{R}^4=\{s,t,x_3,x_4\}\setminus \{s\leq0\}$ with the Riemannian metric $g=ds^2+s^{\frac{2(m-1)}{3(m+1)}}dt^2+s^\frac{4}{3}\hat{g}$ and $f=\frac{2m}{3(m+1)}\ln s+C$ with $\rho=\lambda=0$ and $R=-\frac{4m(m-1)}{9(m+1)^2s^2}$. Here $m>0$ or $m<-1$.
\end{lemma}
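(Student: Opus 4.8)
The plan is to show first that the rigidity of (\ref{rhocase4}) forces $\rho=\lambda=0$, and then to integrate the resulting ordinary differential equations explicitly. I work throughout in the branch $Q\neq 0$ of Lemma \ref{primeexpall} (the complementary locus $Q=0$ is exactly case (\ref{rhocase5})), and I may assume $\zeta_2\neq 0$ as in the running convention, together with $\zeta_3\neq 0$ (the case $\zeta_3=0$ being (\ref{rhocase2}), already treated in Lemma \ref{112core}). Recall that by Lemmas \ref{metric1} and \ref{metric2} the metric has the warped form $g=ds^2+p(s)^2dt^2+h(s)^2\tilde g$ with $\zeta_2=\frac{p'}{p}$, $\zeta_3=\frac{h'}{h}$ and $X=-\frac{k}{h^2}$. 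Since $(m-1)(4\rho-1)\neq 0$ under the hypotheses $m\neq 1$, $\rho\neq\tfrac14$, relation (\ref{rhocase4}) can be solved for $\zeta_2\zeta_3$ as an affine function of $\zeta_2^2$, i.e. it expresses $\zeta_3$ rationally in $\zeta_2$. A structural point I record at once: if $\zeta_2$ were constant, then (\ref{rhocase4}) would force $\zeta_2\zeta_3$, hence $\zeta_3$, to be constant as well, and two nonzero constants $\zeta_2,\zeta_3$ are impossible by the remark preceding this lemma.

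First I would differentiate (\ref{rhocase4}) in $s$ and eliminate $\zeta_2'$ and $\zeta_3'$ using (\ref{zeta2prime}) and (\ref{zeta3prime}); after clearing the denominator $Q$ this produces a polynomial relation $G(\zeta_2,\zeta_3)=0$ with coefficients in $m,\rho,\lambda$. Substituting the rational expression for $\zeta_3$ coming from (\ref{rhocase4}) and clearing powers of $\zeta_2$ yields a single polynomial identity $\Phi(\zeta_2)=0$. Since $g$ and $f$ are real analytic in harmonic coordinates, $\zeta_2$ is a real-analytic function of $s$; were $\Phi$ not the zero polynomial, $\zeta_2$ would be confined to its finite zero set and hence constant, contradicting the structural observation above. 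Therefore every coefficient of $\Phi$ must vanish, and solving this coefficient system is where $\rho=0$ and then $\lambda=0$ emerge. I expect this elimination to be the main obstacle: it is elementary but delicate, and one must rule out any spurious branch in which $G$ shares a common factor with the polynomial underlying (\ref{rhocase4}) for some $\rho\neq 0$.

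Once $\rho=\lambda=0$ is established, (\ref{rhocase4}) collapses to $2(m+1)\zeta_2^2=(m-1)\zeta_2\zeta_3$, so $\zeta_3=\frac{2(m+1)}{m-1}\zeta_2$. Feeding this proportionality into (\ref{zeta2prime}) (with $\rho=\lambda=0$) reduces the system to the single autonomous equation $\zeta_2'=-\frac{3(m+1)}{m-1}\zeta_2^2$, which integrates, after a translation fixing the origin of $s$, to $\zeta_2=\frac{m-1}{3(m+1)s}$ and hence $\zeta_3=\frac{2}{3s}$; one checks directly that (\ref{zeta3prime}) is consistent with this. Because $\zeta_2=\frac{p'}{p}$ and $\zeta_3=\frac{h'}{h}$, integrating gives $p(s)=s^{\frac{m-1}{3(m+1)}}$ and $h(s)=s^{2/3}$ up to positive constants, so $g=ds^2+s^{\frac{2(m-1)}{3(m+1)}}dt^2+s^{4/3}\tilde g$. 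Substituting the explicit $\zeta_2,\zeta_3$ into (\ref{expofx112}) I would check that $X=0$, whence $-\frac{k}{h^2}=0$ forces $k=0$, i.e. $\tilde g$ is the flat $\hat g$; and the scalar-curvature line of (\ref{riccicurvature}) then gives $R=-\frac{4m(m-1)}{9(m+1)^2 s^2}$.

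Finally I would recover $f$ and the admissible range of $m$. From (\ref{quasi-Einstein manifold2}) with $\rho=\lambda=0$ one has $f'\zeta_2=\zeta_2'+\zeta_2^2+2\zeta_2\zeta_3$, and inserting the explicit $\zeta_2,\zeta_3$ gives $f'=\frac{2m}{3(m+1)s}$, hence $f=\frac{2m}{3(m+1)}\ln s+C$. Since $E_1=\nabla s$ is a unit field one needs $f'=|\nabla f|>0$ on the domain $\{s>0\}$, which holds precisely when $\frac{m}{m+1}>0$, i.e. $m>0$ or $m<-1$; this sign condition also keeps the exponents such that $g$ is a genuine Riemannian metric on $\mathbb{R}^4\setminus\{s\le 0\}$. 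Assembling these computations yields the asserted local isometry and completes the proof.
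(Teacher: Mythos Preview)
Your approach is essentially the paper's: differentiate (\ref{rhocase4}), eliminate $\zeta_2',\zeta_3'$ via Lemma \ref{primeexpall}, substitute the rational expression for $\zeta_3$, and exploit non-constancy of the analytic function $\zeta_2$. Your integration after $\rho=\lambda=0$ is correct and matches the paper's computations line by line.

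The gap is in the step you flag as ``the main obstacle.'' Your expectation that the vanishing of all coefficients of $\Phi(\zeta_2)$ will by itself force $\rho=\lambda=0$ is too optimistic. In the paper the resulting expression factors as
\[
(9\rho-2)\,\bigl\{\lambda(m-1)+3\zeta_2^2[4\rho-1+m(2\rho-1)]\bigr\}\,\bigl\{\lambda(m-1)(1-3\rho)+\zeta_2^2\rho[12\rho-3+m(6\rho-1)]\bigr\}=0,
\]
and your coefficient system ``$\Phi\equiv 0$'' admits solutions other than $\rho=\lambda=0$: for instance $\rho=\tfrac{2}{9}$ (any $\lambda$), or $\lambda=0$ with $\rho=\tfrac{m+3}{6(m+2)}$. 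These are \emph{not} disposed of by the polynomial algebra alone; each must be fed back into the original relations. For $\rho=\tfrac{2}{9}$, (\ref{rhocase4}) collapses to $\zeta_2\zeta_3+3\lambda=0$, which by the argument of Lemma \ref{3331} forces $R_{22}=R_{33}$, a contradiction. For $\lambda=0$, $\rho=\tfrac{m+3}{6(m+2)}$, (\ref{rhocase4}) forces $\zeta_2=0$ or $\zeta_2=\zeta_3$, again excluded. The remaining two quadratic factors, when nonzero in their leading coefficient, pin $\zeta_2^2$ to a constant and are killed by your ``$\zeta_2$ non-constant'' observation. Only after this case split does $\rho=0$ (and then $\lambda=0$) survive. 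So your outline is right, but the ``spurious branches'' you allude to are real and require exactly the kind of back-substitution arguments the paper supplies; without them the deduction $\rho=\lambda=0$ is not justified.
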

\begin{proof}
 Suppose nonzero $\zeta_2$ and $\zeta_3$ satisfy (\ref{rhocase4}).  Taking a derivative of (\ref{rhocase4}), we get
\begin{equation}
  (\zeta_2'\zeta_3+\zeta_2\zeta_3')(m-1)(4\rho-1)+2\zeta_2 \zeta_2'(9\rho-2)\{4\rho-1+m(2\rho-1)\}=0 \nonumber
\end{equation}

 First using expressions of $\zeta_2'$ and $\zeta_3'$, we get an equation which contains only $\zeta_2$ and $\zeta_3$. Then put the relation  $\zeta_3=\frac{\zeta_2^2(2-9\rho)\{4\rho-1+m(2\rho-1)\}-\lambda(m-1)(3\rho-1)}{\zeta_2(m-1)(4\rho-1)}$ in that equation. Then we can get

  $\frac{(9\rho-2)\big\{\lambda(m-1)+3\zeta_2^2(4\rho-1+m(2\rho-1))\big\}\big\{\lambda(m-1)(1-3\rho)+\zeta_2^2\rho(12\rho-3+m(6\rho-1))\big\}}{\zeta_2(m-1)(4\rho-1)}=0$

When $\rho=\frac{2}{9}$, we can get $\zeta_2\zeta_3+3\lambda=0$. Then we obtain a contradiction $R_{22}=R_{33}$ by same argument as in Lemma \ref{3331}. Now suppose $\lambda(m-1)+3\zeta_2^2\{4\rho-1+m(2\rho-1)\}=0$. If $\rho=\frac{m+1}{2(m+2)}$, then
$\lambda$ should be zero. But this means that $\zeta_2\zeta_3=0$. Thus $\rho$ cannot be $\frac{m+1}{2(m+2)}$. Hence we can say $\zeta_2=\pm\sqrt{\frac{\lambda(1-m)}{3(4\rho-1+m(2\rho-1))}}$ with $\frac{\lambda(1-m)}{3(4\rho-1+m(2\rho-1))}>0$. Then $\zeta_3$ is also a constant
$\zeta_3=\pm\frac{\lambda\sqrt{4\rho-1+m(2\rho-1)}}{(4\rho-1)\sqrt{3\lambda(1-m)}}$. Therefore this case cannot happen.

Consider the case $\lambda(m-1)(1-3\rho)+\zeta_2^2\rho\{12\rho-3+m(6\rho-1)\}=0$. We can easily see that if $\rho=\frac{m+3}{6(m+2)}$, then we get $\zeta_2=0$ or $\zeta_2=\zeta_3$. Suppose $\rho=0$. Then $\lambda$ also zero, so we get $\zeta_2=\frac{m-1}{2(m+1)}\zeta_3$ from (\ref{rhocase4}). We can obtain $X=0$ from (\ref{riccicurv}) and then (\ref{r1ii1}) is reduced to $b'+\frac{3}{2}b^2=0$. Therefore we can get $\zeta_2=\frac{m-1}{3(m+1)s},\zeta_3=\frac{2}{3s}$ and $R=-\frac{4m(m-1)}{9(m+1)^2s^2}$ where $s$ is defined modulo by constant. Thus we obtain  $g=ds^2+s^{\frac{2(m-1)}{3(m+1)}}dt^2+s^\frac{4}{3}\hat{g}$ and $f=\frac{2m}{3(m+1)}\ln s+C$. And to satisfy $f'=|\nabla f|>0$, $m>0$ or $m<-1$.

 Now assume $\rho$ is not zero. Then we can say that
$\zeta_2=\pm\sqrt{\frac{\lambda(m-1)(3\rho-1)}{\rho(12\rho-3+m(6\rho-1))}}$. Then $\zeta_3$ is also a nonzero constant. Thus this case is also impossible.
\end{proof}

\begin{lemma}\label{case5}
  Nonzero $\zeta_2$ and $\zeta_3$ cannot satisfy (\ref{rhocase5}).
\end{lemma}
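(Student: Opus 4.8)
The plan is to recognise first that the relation (\ref{rhocase5}) is exactly the vanishing of the quantity $Q$ introduced in Lemma \ref{primeexpall}. Consequently, the closed forms (\ref{expofx112})--(\ref{zeta3prime}) for $X,\zeta_2',\zeta_3'$ are unavailable here, and the whole argument must instead be run from the raw identities (\ref{r1ii1}), (\ref{riccicurv}), (\ref{lastone}), supplemented by the geometric relation $X'=-2\zeta_3 X$ coming from $X=-k/h^2$, $\zeta_3=h'/h$. Since $\rho\neq\frac14$ and $m\neq 1$, the coefficient $(4\rho-1)(m-1)$ of $\zeta_3$ in (\ref{rhocase5}) is nonzero; and if $4\rho-1+m(2\rho-1)=0$ then (\ref{rhocase5}) would force $\zeta_3=0$, against hypothesis. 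Hence both coefficients are nonzero, so (\ref{rhocase5}) reads $\zeta_2=\kappa\,\zeta_3$ with the constant $\kappa=-\frac{(4\rho-1)(m-1)}{4\rho-1+m(2\rho-1)}$, where $\kappa\neq 0$ (as $\zeta_2\neq0$) and $\kappa\neq 1$ (as $\lambda_2\neq\lambda_3$ gives $\zeta_2\neq\zeta_3$).

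Next I would substitute $\zeta_2=\kappa\zeta_3$ and $\zeta_2'=\kappa\zeta_3'$ into (\ref{r1ii1}) and (\ref{lastone}). Each becomes a relation among $\zeta_3',\zeta_3^2$ and $X$ with constant coefficients; solving (\ref{r1ii1}) for $X$ and inserting it into the reduced (\ref{lastone}) eliminates $X$. After cancelling the nonzero factors $(m+2)$ and $(\kappa-1)$, the resulting identity collapses to the single first-order ODE $(m-1)\zeta_3'+\big[(m+1)\kappa+(m-1)\big]\zeta_3^2=0$, i.e. $\zeta_3'=-c\,\zeta_3^2$ with $c=\frac{(m+1)\kappa+m-1}{m-1}$. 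Feeding this back into the expression for $X$ yields $X=D\,\zeta_3^2$ for a constant $D$ depending only on $\kappa$ and $m$.

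Now the decisive extra input is $X'=-2\zeta_3 X$: combined with $X=D\zeta_3^2$ and $\zeta_3'=-c\zeta_3^2$ it forces $D(c-1)=0$. The branch $c=1$ gives $(m+1)\kappa=0$, hence $\kappa=0$, which is excluded, so $D=0$. Reading $D=0$ as a quadratic in $\kappa$ (its discriminant turns out to be the perfect square $(m+3)^2$) gives $\kappa=1$ or $\kappa=\frac{m-1}{2(m+1)}$; since $\kappa\neq 1$ we are left with $\kappa=\frac{m-1}{2(m+1)}$, whence also $X\equiv 0$ and $c=\frac32$. It then remains to contradict (\ref{riccicurv}): with $X=0$, $\zeta_2=\kappa\zeta_3$ and $\zeta_3'=-\frac32\zeta_3^2$ its right-hand side vanishes and the bracket reduces to $\rho\,\kappa(2\kappa+1)\zeta_3^2-\lambda$, so $\rho\,\kappa(2\kappa+1)\zeta_3^2=\lambda$. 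For $\kappa=\frac{m-1}{2(m+1)}$ one has $\kappa\neq0$ and $2\kappa+1=\frac{2m}{m+1}\neq0$ since $m\notin\{0,\pm1\}$, so $\kappa(2\kappa+1)\neq0$. If $\rho\neq0$ this makes $\zeta_3^2$ constant, forcing $\zeta_3'=0$ and contradicting $\zeta_3'=-\frac32\zeta_3^2\neq0$; if $\rho=0$, the formula for $\kappa$ from $Q=0$ gives $\kappa=-\frac{m-1}{m+1}$, incompatible with $\kappa=\frac{m-1}{2(m+1)}$ for $m\neq 1$. Either way we reach a contradiction, so nonzero $\zeta_2,\zeta_3$ cannot satisfy (\ref{rhocase5}).

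The main obstacle, as I see it, is not any single hard estimate but the realisation that we sit exactly on the degeneracy locus $Q=0$ where Lemma \ref{primeexpall} fails, together with identifying $X'=-2\zeta_3 X$ as the one additional closing relation. Once that is in place the difficulty is purely the careful bookkeeping that reduces the three second-order constraints to the ODE $\zeta_3'=-c\zeta_3^2$ and the quadratic $D=0$ in $\kappa$; the rest is a clean elimination in (\ref{riccicurv}).
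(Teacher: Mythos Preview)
Your proof is correct, but it takes a somewhat different route from the paper's.

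The paper's argument is much terser: it writes $\zeta_3=\dfrac{4\rho-1+m(2\rho-1)}{(4\rho-1)(1-m)}\,\zeta_2$ and substitutes this into \emph{all three} raw identities (\ref{r1ii1}), (\ref{riccicurv}), (\ref{lastone}) simultaneously, solving to obtain the explicit constants
\[
\zeta_2=\pm\sqrt{\tfrac{\lambda(m-1)}{3(1-4\rho+m(1-2\rho))}},\qquad
\zeta_3=\pm\tfrac{\sqrt{\lambda(1-4\rho+m(1-2\rho))}}{(4\rho-1)\sqrt{3(m-1)}}.
\]
It then invokes the earlier observation (stated just before the (\ref{rhocase4}) lemma) that nonzero \emph{constant} $\zeta_2,\zeta_3$ force $2\zeta_2\zeta_3(\zeta_2-\zeta_3)=0$ via (\ref{double2}), contradicting $\zeta_2\neq\zeta_3$.

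You instead use only (\ref{r1ii1}) and (\ref{lastone}) first, reduce to the single ODE $\zeta_3'=-c\zeta_3^2$ together with $X=D\zeta_3^2$, and then bring in the \emph{additional} geometric relation $X'=-2\zeta_3 X$ (coming from $X=-k/h^2$) to force $D(c-1)=0$; only at the end do you feed the outcome into (\ref{riccicurv}) to close the contradiction. The paper never uses $X'=-2\zeta_3 X$ in this lemma.

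What each approach buys: the paper's three-equation solve is shorter and lands directly on explicit constants, after which the contradiction is a one-line reference. Your approach is more transparent about \emph{why} the degeneracy $Q=0$ does not obstruct the argument, isolates cleanly the only admissible value $\kappa=\frac{m-1}{2(m+1)}$, and handles the $\rho=0$ and $\rho\neq0$ endgames separately; it also avoids relying on the constants-contradiction remark and instead derives a fresh contradiction from (\ref{riccicurv}) alone. Both are sound.
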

\begin{proof}
  Suppose $\zeta_2$ and $\zeta_3$ satisfy $(4\rho-1)(m-1)\zeta_3+(4\rho-1+m(2\rho-1))\zeta_2=0$.
 Since $\rho\neq\frac{1}{4}$, we have $\zeta_3=\frac{4\rho-1+m(2\rho-1)}{(4\rho-1)(1-m)}\zeta_2$. Putting this in (\ref{r1ii1}), (\ref{riccicurv}) and (\ref{lastone}), we can get
\begin{equation*}
  \zeta_2=\pm\sqrt{\frac{\lambda(m-1)}{3(1-4\rho+m(1-2\rho))}},\quad \zeta_3=\pm\frac{\sqrt{\lambda(1-4\rho+m(1-2\rho))}}{(4\rho-1)\sqrt{3(m-1)}}
\end{equation*}
Therefore this case also give a contradiction.
\end{proof}

We sum up all the results from Lemma \ref{case1} to Lemma \ref{case5}, then we can obtain the following Proposition.
\begin{prop}\label{112result}
  Let $(M^4,g,f)$ be a $(m,\rho)$-quasi-Einstein manifold with harmonic Weyl curvature, $m\notin\{\pm1,-2\}$ and $\rho\notin\{\frac{1}{4},\frac{1}{6}\}$. Suppose that $\lambda_2\neq\lambda_3=\lambda_4$ for an adapted frame fields $E_j$, $j=1,2,3,4$,
 in an open subset $O$ of $M_\mathcal{T}\cap\{\nabla f\neq 0\}$.
Then for each point $p$ in $O$, there exists a neighborhood $V$ of $p$ in $O$  which is isometric to
 one of the following; Here $R$ is the constant scalar curvature of $g$ and $C$ is an arbitrary constant.

{\rm (i)} $g$ is an Einstein metric with $f$ a constant function.

   {\rm (ii)} $(V,g)$ is isometric to a domain in $\mathcal{B}_{\frac{R}{2(m+2)}}^2 \times\mathbb{N}_{\frac{R(m+1)}{2(m+2)}}^2$ with $g=ds^2+\sin^2(\sqrt{\frac{R}{2(m+2)}}s)dt^2+\tilde{g}$, $f=-m\ln(\cos\sqrt{\frac{R}{2(m+2)}}s)+C$ and  $\frac{R}{2(m+2)}>0$. Here $s\in(0,\frac{\pi}{2})$ is the distance function on $\mathcal{B}_{\frac{R}{2(m+2)}}^2$ from the north pole and $\tilde{g}$ has constant curvature $\frac{R(m+1)}{2(m+2)}$.

 {\rm (iii)} $(V,g)$ is isometric to a domain in $\mathcal{D}_{\frac{R}{2(m+2)}}^2 \times\mathbb{N}_{\frac{R(m+1)}{2(m+2)}}^2$ with $g=ds^2+\cosh^2(\sqrt{-\frac{R}{2(m+2)}}s)dt^2+\tilde{g}$, $f=-m\ln(-\sinh\sqrt{-\frac{R}{2(m+2)}}s)+C$ and $\frac{R}{2(m+2)}<0$. Here $s\in(-\infty,0)$ is the signed distance function on $\mathcal{D}_{\frac{R}{2(m+2)}}^2$ from the line $\{(s,t)|s=0\}$ and $\tilde{g}$ has constant curvature $\frac{R(m+1)}{2(m+2)}$.

 {\rm (iv)} $(V,g)$ is isometric to a domain in $\mathbb{H}_{\frac{R}{2(m+2)}}^2 \times\mathbb{N}_{\frac{R(m+1)}{2(m+2)}}^2$ with $g=ds^2+\sinh^2(\sqrt{-\frac{R}{2(m+2)}}s)dt^2+\tilde{g}$, $f=-m\ln(\cosh\sqrt{-\frac{R}{2(m+2)}}s)+C$ and  $\frac{R}{2(m+2)}<0$. Here $s\in(-\infty,0)$ is the signed distance function on $\mathbb{H}_{\frac{R}{2(m+2)}}^2$ from the point $\{s=0\}$ and $\tilde{g}$ has a constant curvature $\frac{R(m+1)}{2(m+2)}$.

{\rm (v)} A domain in $\mathbb{R}^4=\{s,t,x_3,x_4\}\setminus\{s\leq0\}$ with the metric $g=ds^2+s^{\frac{2(m-1)}{3(m+1)}}dt^2+s^{\frac{4}{3}}\hat{g}$, $f=\frac{2m}{3(m+1)}\ln s+C$ with $\rho=\lambda=0$ where $\hat{g}$ is the Euclidean metric on the $(x_3,x_4)-$plane. Here $m>0$ or $m<-1$.

\bigskip

For the case (ii),(iii) and (iv), if $\rho\neq\frac{m+1}{2(m+2)}$ then $R=\frac{-2\lambda(m+2)}{4\rho-1+m(2\rho-1)}$.
\end{prop}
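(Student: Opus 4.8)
The plan is to treat this Proposition purely as a synthesis of the lemmas already proved in this section, funneling the four-way alternative of Lemma \ref{five} into the individual case lemmas and discarding every branch that terminates in the contradiction $R_{22}=R_{33}$. The organizing standing fact I would record first is that throughout $O$ we have $\lambda_2\neq\lambda_3$, and since $\zeta_i=\frac{1}{f'}(\rho R+\lambda-R_{ii})$ with $f'=|\nabla f|\neq 0$, this is equivalent to $\zeta_2\neq\zeta_3$. Consequently the contradictions ``$R_{22}=R_{33}$'' appearing in Lemmas \ref{case1} and \ref{3331} are genuine, and the relation $\zeta_2=\zeta_3$ can never hold in $O$. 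This is the single observation that makes all the dichotomies below decisive.

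First I would eliminate $\zeta_2=0$. By Lemma \ref{case1}, $\zeta_2=0$ with $\zeta_3\neq 0$ is contradictory, while $\zeta_2=\zeta_3=0$ is already excluded by $\zeta_2\neq\zeta_3$; hence $\zeta_2\neq 0$ on all of $O$. Next I invoke Lemma \ref{five}: at each point $\zeta_2,\zeta_3$ satisfy at least one of the relations (\ref{rhocase2})--(\ref{rhocase5}), and I dispose of each in turn. For (\ref{rhocase2}), i.e. $\zeta_3=0$, the hypothesis $\zeta_2\neq 0$ places us exactly in Lemma \ref{112core}, which produces the metrics (ii), (iii), (iv), together with the Einstein possibility (i) arising when $\lambda=0$, and also covers the degenerate branch $4\rho-1+m(2\rho-1)=0$. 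For (\ref{rhocase3}), Lemma \ref{3331} forces $\zeta_2'+\zeta_2^2=\zeta_3'+\zeta_3^2$, i.e. $R_{22}=R_{33}$, hence $\zeta_2=\zeta_3$, a contradiction; discard. For (\ref{rhocase4}), since $\zeta_3=0$ is already the previous case, I may assume $\zeta_3\neq 0$ and apply the lemma treating (\ref{rhocase4}): every subcase collapses to nonzero constant $\zeta_2,\zeta_3$ (impossible, since constant $\zeta_2,\zeta_3$ give $2\zeta_2\zeta_3(\zeta_2-\zeta_3)=0$ from (\ref{double2})) except the branch $\rho=\lambda=0$, which yields precisely the singular metric (v). Finally (\ref{rhocase5}) is ruled out by Lemma \ref{case5}.

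Assembling, the only surviving possibilities for a neighborhood of each point are (i)--(v), as claimed. The scalar-curvature identity $R=\frac{-2\lambda(m+2)}{4\rho-1+m(2\rho-1)}$ valid for (ii)--(iv) when $\rho\neq\frac{m+1}{2(m+2)}$ I would read off from equation (\ref{223}) inside Lemma \ref{112core}, noting that $\rho=\frac{m+1}{2(m+2)}$ is exactly the condition $4\rho-1+m(2\rho-1)=0$ under which that formula degenerates. The main obstacle here is not any single computation but the bookkeeping: I must verify the case split is exhaustive and correctly routed, in particular that the sub-possibilities $\zeta_3=0$ hidden inside relations (\ref{rhocase4}) and (\ref{rhocase5}) are already absorbed by Lemma \ref{112core}, and that the ``nonzero constant $\zeta_2,\zeta_3$'' outcomes really are contradictory. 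I would also flag the one non-formal ingredient: the real-analyticity of the $\zeta_i$ on $M_{\mathcal{T}}\cap\{\nabla f\neq 0\}$, which is what licenses passing from ``one factor vanishes at a point'' in Lemma \ref{five} to ``one relation holds throughout a neighborhood,'' so that the local classification indeed holds on a neighborhood $V$ of each $p$.
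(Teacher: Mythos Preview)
Your proposal is correct and follows essentially the same approach as the paper: the paper's own proof of this Proposition is simply the one-line remark ``We sum up all the results from Lemma~\ref{case1} to Lemma~\ref{case5},'' and your write-up is precisely that synthesis made explicit, with the added care of noting why $\zeta_2\neq\zeta_3$ is the organizing constraint and why real-analyticity lets one promote the pointwise alternative of Lemma~\ref{five} to a neighborhood. The only minor remark is that case (i) is somewhat vestigial here, since an Einstein metric would force $\lambda_2=\lambda_3$, contradicting the standing hypothesis on $O$; the paper lists it anyway, and your treatment matches this.
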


\section{Proof of Theorems}

\quad In this section we first classify the remaining case of $\lambda_2=\lambda_3=\lambda_4$. We will give this result without a proof. One can find the proof in \cite{JS}. Then summing up this result,  Proposition 1 and 2, we give a local classification of $(m,\rho)$-quasi-Einstein manifolds with harmonic
Weyl curvature.
\begin{prop}\label{prop1,3}
  Let $(M^4,g,f)$ be a $(m,\rho)$-quasi-Einstein manifold with harmonic Weyl curvature, $m\notin\{\pm 1,-2\}$, $\rho\neq \frac{1}{4}$ and non-constant $f$. Suppose that $\lambda_2=
  \lambda_3=\lambda_4\neq\lambda_1$ for an adapted frame field in an open subset $O$ of $M_{\mathcal{T}}\cap\{\nabla f\neq 0\}$.

  Then for each point $p_0$ in $O$, there exists a neighborhood $V$ of $p_0$ in $O$ where $g$ is a warped product;
  \begin{equation}
    g=ds^2+h(s)^2\bar{g} \label{1,3case}
  \end{equation}
  for a positive function $h$, where the Riemannian metric $\bar{g}$ has constant curvature, say $k$. In particular,
  $g$ is locally conformally flat.
\end{prop}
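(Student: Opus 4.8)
The plan is to show that the hypothesis $\lambda_2=\lambda_3=\lambda_4$ forces the regular level hypersurfaces of $f$ to be totally umbilic with an umbilicity factor depending on $s$ alone, and then to integrate this datum along the geodesic field $E_1$ to recover the warped product form, with constant curvature of the fiber coming essentially for free in dimension three.

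First I would record the umbilicity. From $\nabla_{E_i}E_1=\zeta_i E_i$ with $\zeta_i=\frac{1}{f'}(\rho R+\lambda-R_{ii})$ for $i>1$, the assumption $R_{22}=R_{33}=R_{44}$ immediately gives $\zeta_2=\zeta_3=\zeta_4=:\zeta$. By Lemma~\ref{threesolb} and Lemma~\ref{depends}, the quantities $f'=|\nabla f|$, $R$ and each $\lambda_i$ are functions of $s$ only, so $\zeta=\frac{1}{f'}(\rho R+\lambda-\lambda_2)$ is a function of $s$ alone. Combined with $\nabla_{E_1}E_1=0$ from Lemma~\ref{threesolb}(v), this says that each leaf $\Sigma_s=\{s=\mathrm{const}\}$ is totally umbilic with shape operator $\zeta(s)\,\mathrm{Id}$ relative to the unit geodesic normal $E_1=\nabla s$.

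Next I would reconstruct the metric. Since $E_1=\nabla s$ is a unit geodesic field whose orthogonal distribution is tangent to the hypersurfaces $\Sigma_s$, in Fermi coordinates $(s,x^2,x^3,x^4)$ adapted to a fixed leaf $\Sigma_{s_0}$ the metric has no cross terms, $g=ds^2+g_{ab}(s,x)\,dx^a dx^b$. The second fundamental form satisfies $\tfrac12\partial_s g_{ab}=\zeta(s)g_{ab}$, which is exactly the computation carried out in Lemma~\ref{metric2}, now applied verbatim to the full three-dimensional umbilic foliation. Because $\zeta$ depends on $s$ only, this ODE integrates to $g_{ab}(s,x)=h(s)^2\bar g_{ab}(x)$ with $h(s)=\exp\big(\int_{s_0}^s\zeta\big)>0$ and $\bar g_{ab}(x)=g_{ab}(s_0,x)$, so $g=ds^2+h(s)^2\bar g$ on a neighborhood $V$ of $p_0$. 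I expect the only genuine care needed here is the observation that it is the umbilicity with a \emph{common}, $x$-independent factor $\zeta$ that makes the fiber metric scale homogeneously; a direction-dependent or $x$-dependent factor would instead produce a twisted or multiply-warped product, so this is the main structural point to pin down.

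Finally I would show $\bar g$ has constant curvature. For the warped product $g=ds^2+h^2\bar g$ with three-dimensional fiber, the standard Ricci formula gives, for $X,Y$ tangent to the fiber, $Rc(X,Y)=\overline{Rc}(X,Y)-\big(\tfrac{h''}{h}+2\tfrac{(h')^2}{h^2}\big)g(X,Y)$. The hypothesis $Rc(E_i,E_j)=\lambda_2\,\delta_{ij}$ on the fiber then yields $\overline{Rc}=\Phi(s)\,\bar g$ for a single function $\Phi$ of $s$. Since the left-hand side is a fixed tensor on the fiber independent of $s$, the coefficient $\Phi$ must be a constant $\kappa$, so $\bar g$ is Einstein; as $\bar g$ is three-dimensional, Einstein is equivalent to constant sectional curvature, giving the stated form. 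A warped product over an interval whose fiber has constant curvature is locally conformally flat, so $g$ is locally conformally flat, completing the proof.
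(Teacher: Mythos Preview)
Your proof is correct and is essentially the argument the paper defers to \cite{JS}. The key steps---umbilicity of the level hypersurfaces with an $s$-dependent factor $\zeta$ coming from $\zeta_2=\zeta_3=\zeta_4$, integration of $\tfrac12\partial_s g_{ab}=\zeta(s)g_{ab}$ to the warped product form exactly as in Lemma~\ref{metric2}, and constant curvature of the three-dimensional fiber via the warped-product Ricci formula---match that approach.
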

\begin{proof}
  One may follow the argument in the proof of Proposition 7.1 in \cite{JS}.
\end{proof}

For the metric $g$ in (\ref{1,3case}), if we write $\nabla_{E_i}E_1=\zeta E_i$ for $i=2,3,4$, then we get the
following equations
\begin{eqnarray}
  &(m+1)\zeta'+(m+3)\zeta^2+2X+\rho R+\lambda+\frac{m R'}{f'}(\frac{1}{6}-\rho)=0 \label{1,3cond1}\\
  &\zeta'+3\zeta^2+2X+\rho R+\lambda=f'\zeta  \label{1,3cond2}\\
  &-3\zeta'-3\zeta^2+f''-\frac{1}{m}(f')^2=\rho R+\lambda \label{1,3cond3}\\
 &-\zeta''=4\zeta\zeta'-2\zeta X+\frac{R'}{6} \label{1,3cond4}
\end{eqnarray}

\begin{rmk}
  If all $\lambda_i$'s, $i=1,\cdots,4$, are equal, then the metric is Einstein. And if $f$ is not constant, then
the conclusion of Proposition \ref{prop1,3} still holds. In fact, from the section 1 on \cite{Che}, the Einstein metric $g$
becomes locally of the form $g=ds^2+(f'(s))^2\tilde{g}$ where $\tilde{g}$ has constant curvature.
\end{rmk}

\quad Now we shall combine Proposition 1, 2 and 3 to prove a classification of  $(m,\rho)$-Einstein manifold with harmonic Weyl curvature.

\begin{proof1}\label{notwoof}
Combining the results of Proposition 1, 2 and 3, we can get the theorem.
\end{proof1}

From Theorem 1, we can describe complete spaces corresponding to (ii), (iii) and (iv). For the type (v), note that this is actually a $(\lambda,4+m)$-Einstein manifold.
In \cite{KK}, D.Kim and Y.Kim showed that there is a constant $\mu$ such that $\mu=w\Delta w+(m-1)|\nabla w|^2+\lambda w^2$. If we compute
this constant $\mu$ for the type (v) space, then we obtain $\mu=0$. But C.He, P.Petersen and W.Wylie \cite{HPW} proved that
complete $(\lambda,n+m)$-Einstein metrics with $m>1$, $\lambda\geq 0$, and $\mu\leq 0$, are the trivial ones with $\lambda=\mu=0$. Thus
the type (v) in Theorem 1 cannot be a complete metric.

\begin{theorem}\label{completecase}
   A (Cauchy) complete $(m,\rho)$-Einstein manifold $(M^4,g,f)$ with harmonic Weyl curvature, $m\notin\{\pm 1,-2\}$ and $\rho\notin\{\frac{1}{4},\frac{1}{6}\}$  is isometric to one of the following;
Here $R$ is the constant scalar curvature of $g$ and $C$ is an arbitrary constant.

  {\rm (i)} $g$ is an Einstein metric with $f$ a constant function.

  {\rm (ii)} A finite quotient of $\mathcal{B}_{\frac{R}{2(m+2)}}^2 \times\mathbb{N}_{\frac{R(m+1)}{2(m+2)}}^2$ with
  $\frac{R}{2(m+2)}>0$
   and  $f=-m\ln(\cos\sqrt{\frac{R}{2(m+2)}}s)+C$.

{\rm (iii)} A finite quotient of $\mathcal{D}_{\frac{R}{2(m+2)}}^2 \times\mathbb{N}_{\frac{R(m+1)}{2(m+2)}}^2$ with  $\frac{R}{2(m+2)}<0$
 and $f=-m\ln(-\sinh\sqrt{-\frac{R}{2(m+2)}}s)+C$.

{\rm (iv)} A finite quotient of $\mathbb{H}_{\frac{R}{2(m+2)}}^2 \times\mathbb{N}_{\frac{R(m+1)}{2(m+2)}}^2$ with $\frac{R}{2(m+2)}<0$
 and $f=-m\ln(\cosh\sqrt{-\frac{R}{2(m+2)}}s)+C$.

{\rm (v)} $g$ is locally conformally flat.

\end{theorem}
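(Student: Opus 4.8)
The plan is to deduce this global classification directly from the local structure theory of Theorem \ref{mainthm}, so the argument is essentially a local-to-global upgrade. First I would apply Theorem \ref{mainthm} on the open dense set $U\subset M$, producing near each regular point one of its six local models. Because $g$ and $f$ are real-analytic in harmonic coordinates and the scalar curvature $R$ is a global constant, the multiplicity pattern of the Ricci eigenvalues is constant on each connected component of $M_{\mathcal{T}}\cap\{\nabla f\neq 0\}$, so a single model type occurs throughout $M$ away from a nowhere-dense set. The Einstein model (i) of Theorem \ref{mainthm} gives conclusion (i), and the warped-product locally conformally flat model (vi) gives conclusion (v) of the present theorem verbatim.

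Next I would rule out the singular model (v) of Theorem \ref{mainthm}. There one has $\rho=\lambda=0$, so setting $w=e^{-f/m}$ turns $(M,g)$ into a $(\lambda,4+m)$-Einstein manifold; as recorded above, the Kim--Kim invariant $\mu=w\Delta w+(m-1)|\nabla w|^2+\lambda w^2$ \cite{KK} computes to $0$ on this model, and the rigidity of He--Petersen--Wylie \cite{HPW} then forces the trivial case with $f$ constant, contradicting $\nabla f\neq 0$. Hence a Cauchy-complete manifold is never of this type, which is why it is absent from the list.

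The substantive step is promoting the local product descriptions (ii)--(iv) of Theorem \ref{mainthm} to the finite-quotient statements (ii)--(iv) here. Locally $g=ds^2+p(s)^2\,dt^2+\tilde g$ with $\tilde g$ of constant curvature $\frac{R(m+1)}{2(m+2)}$, the base $ds^2+p(s)^2\,dt^2$ of constant curvature $\frac{R}{2(m+2)}$, and $f$ the explicit function of $s$. Since $s$ is intrinsically determined by $f$, I would extend $s$ over its maximal interval and classify the two endpoint types: an end where $p(s)\to 0$ closes the base smoothly at a pole, after the standard normalization of the $t$-period (the centre of the hemisphere in (ii), the fixed point in (iv)), whereas an end where $f'=|\nabla f|\to\infty$ is a genuine metric boundary at finite distance (the equator in (ii), the bounding line in (iii)) at which $f$ blows up but the metric extends. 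Cauchy-completeness then pins the base down to exactly the maximal model $\mathcal{B}^2_{\frac{R}{2(m+2)}}$, $\mathcal{D}^2_{\frac{R}{2(m+2)}}$, or $\mathbb{H}^2_{\frac{R}{2(m+2)}}$, and forces the fibre to be a complete constant-curvature surface $\mathbb{N}^2_{\frac{R(m+1)}{2(m+2)}}$, so that the universal object is the full Riemannian product. Finally, any isometry of $M$ preserves $f$, hence $s$, hence the distinguished pole or bounding locus of the base factor; the deck group therefore acts on the base through its compact isotropy at that locus and reduces to a finite group, while its action on the fibre is absorbed into the choice of $\mathbb{N}^2$. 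This yields the finite-quotient conclusions (ii)--(iv).

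The main obstacle I expect is exactly this last globalization: carefully matching the maximal-interval behaviour of $s$ to Cauchy-completeness, treating the blow-up locus of $f$ as a boundary rather than an interior singularity, and verifying that the structure-preserving isometry group reduces to a finite group acting on the base. The rigidity that makes these identifications work comes from the real-analyticity of $g$ together with the curvature-forced smooth closing-up at the poles.
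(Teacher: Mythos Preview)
Your approach is essentially the same as the paper's: apply Theorem \ref{mainthm} locally, rule out the singular model (v) via the Kim--Kim constant $\mu$ together with the He--Petersen--Wylie rigidity, and then globalize the product models (ii)--(iv). The paper's own argument is in fact terser than yours---it only spells out the elimination of type (v) and simply asserts that the complete spaces corresponding to (ii)--(iv) can be described from Theorem \ref{mainthm}---so your detailed endpoint analysis and deck-group discussion go beyond what the paper writes out, though they follow the same strategy.
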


\bigskip

As a corollary, we finish this section by proving the classification of $(m,\rho)$-quasi-Einstein manifolds with harmonic curvature. Since if $(M^4,g,f)$ satisfy the harmonic curvature condition then $R$ is a constant,  we can analyze (\ref{1,3cond1})$\thicksim$(\ref{1,3cond4}).
\begin{theorem}
 Let $(M^4,g,f)$ be a $(m,\rho)$-quasi-Einstein manifold with harmonic curvature, $m\notin\{\pm1,-2\}$ and $\rho\notin\{\frac{1}{4},\frac{1}{6}\}$. Then there exist an open dense subset $U$ of $M$ such that
for each point $p$ in $U$ there exists a neighborhood $V_p$  where $(V_p,g)$ is isometric to one of the following;
Here $R$ is the constant scalar curvature and $C$ is an arbitrary constant.

  {\rm (i)} An Einstein metric ($f$ can be a non-constant function).

   {\rm (ii)} A domain in $\mathcal{B}_{\frac{R}{2(m+2)}}^2 \times\mathbb{N}_{\frac{R(m+1)}{2(m+2)}}^2$ with $g=ds^2+\sin^2(\sqrt{\frac{R}{2(m+2)}}s)dt^2+\tilde{g}$, $f=-m\ln(\cos\sqrt{\frac{R}{2(m+2)}}s)+C$ and  $\frac{R}{2(m+2)}>0$.

 {\rm (iii)} A domain in $\mathcal{D}_{\frac{R}{2(m+2)}}^2 \times\mathbb{N}_{\frac{R(m+1)}{2(m+2)}}^2$ with $g=ds^2+\cosh^2(\sqrt{-\frac{R}{2(m+2)}}s)dt^2+\tilde{g}$, $f=-m\ln(-\sinh\sqrt{-\frac{R}{2(m+2)}}s)+C$ and $\frac{R}{2(m+2)}<0$.

 {\rm (iv)} A domain in $\mathbb{H}_{\frac{R}{2(m+2)}}^2 \times\mathbb{N}_{\frac{R(m+1)}{2(m+2)}}^2$ with $g=ds^2+\sinh^2(\sqrt{-\frac{R}{2(m+2)}}s)dt^2+\tilde{g}$, $f=-m\ln(\cosh\sqrt{-\frac{R}{2(m+2)}}s)+C$ and  $\frac{R}{2(m+2)}<0$.

 {\rm (v)} A domain in $\mathbb{I}\times M_{\frac{\rho R+\lambda}{m+3}}$ where $M_{\frac{\rho R+\lambda}{m+3}}$ is a 3-dimensional manifold with constant curvature $\frac{\rho R+\lambda}{m+3}$, with $g$ and $w$ is one of the followings;
\begin{footnotesize}
\begin{align*}
   \left\{
     \begin{array}{ll}
      \mathbb{I}=(0,\frac{\pi}{2}), g=ds^2+\sin^2\Big(\sqrt{\frac{\rho R+\lambda}{m+3}}s\Big)\bar{g},
  f=-m\ln(\cos\sqrt{\frac{\rho R+\lambda}{m+3}}s)+C, & \hbox{$\frac{\rho R+\lambda}{m+3}>0$;} \\
      \mathbb{I}=(-\infty,0),g=ds^2+\cosh^2\Big(\sqrt{\frac{-(\rho R+\lambda)}{m+3}}s\Big)\bar{g},
  f=-m\ln(-\sinh\sqrt{\frac{-(\rho R+\lambda)}{m+3}}s)+C, & \hbox{$\frac{\rho R+\lambda}{m+3}<0$;} \\
      \mathbb{I}=(-\infty,0), g=ds^2+\sinh^2\Big(\sqrt{\frac{-(\rho R+\lambda)}{m+3}}s\Big)\bar{g},
 f=-m\ln(\cosh\sqrt{\frac{-(\rho R+\lambda)}{m+3}}s)+C, & \hbox{$\frac{\rho R+\lambda}{m+3}<0$;}
     \end{array}
   \right.
\end{align*}
\end{footnotesize}
where $\bar{g}$ has constant sectional curvature $k_{\bar{g}}=\frac{\rho R+\lambda}{m+3}$.

\end{theorem}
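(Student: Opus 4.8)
The plan is to derive this theorem from the local classification of Theorem \ref{mainthm} by exploiting the single extra fact that harmonic curvature is strictly stronger than harmonic Weyl curvature. First I would observe that $d^\nabla Rc=0$ forces the scalar curvature to be (locally) constant: tracing the Codazzi identity $\nabla_i R_{jk}=\nabla_j R_{ik}$ and comparing with the contracted second Bianchi identity $\mathrm{div}\,Rc=\tfrac12\,dR$ gives $dR=0$, and simultaneously $\delta W=0$. Hence all of Sections 2--5 apply, and on the open dense set $M_\mathcal{T}\cap\{\nabla f\neq0\}$ we may split into the three cases according to the multiplicity of $\lambda_2,\lambda_3,\lambda_4$. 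The pairwise-distinct case yields an Einstein metric by Proposition \ref{1111main}, which is item (i).

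For the case $\lambda_2\neq\lambda_3=\lambda_4$ I would simply intersect the conclusion of Proposition \ref{112result} with the requirement that $R$ be constant. Its product types reproduce (ii)--(iv) here, since each has constant scalar curvature, and the only item to be removed is the singular metric of Proposition \ref{112result}(v): there $R=-\frac{4m(m-1)}{9(m+1)^2 s^2}$ is non-constant on its domain (as $m\notin\{0,1\}$), so harmonic curvature excludes it. This is exactly the point at which the stronger hypothesis deletes the pathological example of the main theorem.

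The substantive work is the fully degenerate case $\lambda_2=\lambda_3=\lambda_4$. By Proposition \ref{prop1,3} the metric is $g=ds^2+h(s)^2\bar g$ with $\bar g$ of constant curvature and $\zeta=h'/h$, and I would now impose $R'=0$ in the system (\ref{1,3cond1})--(\ref{1,3cond4}). Writing the scalar curvature as $R=-6\zeta'-12\zeta^2-6X$ and eliminating $X$ against (\ref{1,3cond1}) gives $(m-1)(\zeta'+\zeta^2)=\tfrac{R}{3}-(\rho R+\lambda)$, so constancy of $R$ forces $\zeta'+\zeta^2$ to be constant. Feeding $f'=-m(\zeta'+\zeta^2)/\zeta$ --- the difference of (\ref{1,3cond1}) and (\ref{1,3cond2}) --- into the $E_1E_1$ equation (\ref{1,3cond3}) then evaluates this constant, yielding
\[
\zeta'+\zeta^2=-\frac{\rho R+\lambda}{m+3},\qquad\text{equivalently}\qquad h''=-\frac{\rho R+\lambda}{m+3}\,h.
\]
Solving this linear equation produces the families $h\in\{\sin,\cosh,\sinh\}$ according to the sign of $\rho R+\lambda$; integrating $f'=\frac{m(\rho R+\lambda)}{(m+3)\,\zeta}$ recovers the stated potentials, and substituting the resulting value of $X$ into $X=-k/h^2$ determines the constant fiber curvature, which gives item (v).

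The main obstacle is this last case. One must check that, after setting $R'=0$, the four equations (\ref{1,3cond1})--(\ref{1,3cond4}) remain compatible --- equation (\ref{1,3cond4}) should reduce to the $s$-derivative of $R=-6\zeta'-12\zeta^2-6X$ and thus be automatic --- and that they genuinely force $\zeta'+\zeta^2$ to be a constant rather than merely restricting it. The remaining delicacy is bookkeeping: carrying the integration cleanly through the three sign regimes, identifying the admissible interval $\mathbb{I}$ on which $f'=|\nabla f|>0$, and confirming the claimed constant fiber curvature in each branch. No idea beyond the ODE analysis of Section 5 is required.
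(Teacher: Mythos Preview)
Your proposal is correct and follows the paper's overall architecture: invoke Theorem~\ref{mainthm}, discard the singular metric of Proposition~\ref{112result}(v) because its scalar curvature is nonconstant, and then analyze the locally conformally flat case under $R'=0$. The one place where your argument genuinely diverges from the paper is the derivation of the key ODE $\zeta'+\zeta^2=-\frac{\rho R+\lambda}{m+3}$ in the case $\lambda_2=\lambda_3=\lambda_4$.

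The paper's route is shorter and more direct: with $R'=0$, differentiate (\ref{1,3cond1}) and combine with (\ref{1,3cond4}) (using $X'=-2\zeta X$) to obtain $(2m-2)\zeta(X-\zeta')=0$, hence $\zeta'=X$; substituting this back into (\ref{1,3cond1}) immediately gives $(m+3)(\zeta'+\zeta^2)+\rho R+\lambda=0$. Your route is purely algebraic: you substitute $X=-\tfrac{R}{6}-\zeta'-2\zeta^2$ into (\ref{1,3cond1}) to get $(m-1)(\zeta'+\zeta^2)=\tfrac{R}{3}-(\rho R+\lambda)$, read off $f'$ from the difference of (\ref{1,3cond1}) and (\ref{1,3cond2}), and then let (\ref{1,3cond3}) fix the constant. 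Both are valid; the paper's trades one differentiation for the use of (\ref{1,3cond4}), while yours never differentiates but uses all of (\ref{1,3cond1})--(\ref{1,3cond3}) and the scalar curvature formula. Your observation that (\ref{1,3cond4}) with $R'=0$ is exactly the $s$-derivative of $R=-6\zeta'-12\zeta^2-6X$ is correct and explains why the two systems are equivalent. Note also that combining your two expressions for $\zeta'+\zeta^2$ yields the compatibility relation $R(m+3)=12(\rho R+\lambda)$, which the paper leaves implicit; this is worth recording when you identify the fiber curvature $k_{\bar g}$.
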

\begin{proof3}
  The metric $g=ds^2+s^{\frac{2(m-1)}{3(m+1)}}dt^2+s^\frac{4}{3}\hat{g}$ in Theorem 1 does not have a constant scalar curvature, so
  it cannot satisfy the harmonic curvature condition. And we already mentioned that type (ii)$\sim$(iv) satisfy the harmonic curvature condition.

Now suppose the metric $g$ of the type (vi) satisfy the harmonic curvature condition and $\lambda_1\neq\lambda_2=\lambda_3=\lambda_4$. Then (\ref{1,3cond1}) becomes
\begin{equation}
  (m+1)\zeta'+(m+3)\zeta^2+2X+\rho R+\lambda=0 \label{1,3solv}
\end{equation}
Differentiating this equation and equating with (\ref{1,3cond4}), we can obtain $\zeta'=X$. Putting
this in the above equation (\ref{1,3solv}), we get
\begin{equation}
  \zeta'+\zeta^2+\frac{\rho R+\lambda}{m+3}=0
\end{equation}
By same argument as in Lemma \ref{112core}, we can obtain the results.
\end{proof3}

Note that non-trivial Einstein case is classified in [Proposition3.1, \cite{HPW}]. Even they considered $\rho=0$ case, we can apply their proof with $\bar{k}=\frac{\lambda-(1-4\rho)\alpha}{m}$ when $Rc=\alpha g$.

\section{$(\lambda,4+m)$-Einstein manifolds}

As mentioned in an introduction, we give a classification of $(\lambda,4+m)$-Einstein manifolds. First recall the definition of $(\lambda,n+m)$-Einstein manifold.
\begin{definition}\cite{HPW}
  A Riemannian manifold $(M^n,g)$ is called a $(\lambda,n+m)$-Einstein manifold, if there exists a smooth function $w$ on $M$ which satisfies
\begin{align*}
  \nabla dw\quad&=\quad\frac{w}{m}(Rc-\lambda g)\\
w\quad&>\quad0\quad \textrm{on int$(M)$}\\
w\quad&=\quad0\quad \textrm{on $\partial M$ if $\partial M\neq \phi$}
\end{align*}
where $\lambda$ and $m$ are constants.
\end{definition}

Taking $\rho=0$ and $w=e^{-\frac{f}{m}}$ in Theorem 1, we can obtain a classification of $(\lambda,4+m)$-Einstein manifolds under the harmonic Weyl curvature condition.
Since motivation of this special case comes from the warped product Einstein metric, we only consider $m>1$.

\begin{cor}\label{mainthmcor}
  Let $(M^4,g,w)$ be a $(\lambda,4+m)$-Einstein manifold (not necessarily complete) with harmonic Weyl curvature and $m> 1$. Then there exists an open dense subset $U$ of $M$ such that
for each point $p$ in $U$ there exists a neighborhood $V_p$  where $(V_p,g)$ is isometric to one of the following;
 Here $C$ is an positive arbitrary constant.

{\rm (i)} An Einstein metric with $w$ a constant function.

 {\rm (ii)} A domain in $\mathcal{B}_{\frac{\lambda}{m+1}}^2 \times\mathbb{S}_{\lambda}^2$ with $w=C\cos\sqrt{\frac{\lambda}{m+1}}s$ and $\lambda>0$
where  $s\in(0,\frac{\pi}{2}]$ is the distance function on $\mathcal{B}^2_\frac{\lambda}{m+1}$ from the the north pole.

 {\rm (iii)}A domain in $\mathcal{D}_{\frac{\lambda}{m+1}}^2 \times\mathbb{H}_{\lambda}^2$ with  $w=-C\sinh\sqrt{\frac{-\lambda}{m+1}}s$ and $\lambda<0$ where $s\in(-\infty,0]$ can be viewed as the signed distance function on $\mathcal{D}^2_\frac{R}{2(m+2)}$ from the line $\{(s,t)|s = 0\}$

 {\rm (iv)} A domain in $\mathbb{H}_{\frac{\lambda}{m+1}}^2 \times\mathbb{H}_{\lambda}^2$ with $w=C\cosh\sqrt{-\frac{\lambda}{m+1}}s$ and $\lambda<0$ where
$s\in(-\infty,0)$ is the signed distance function on $\mathbb{H}^2_\frac{\lambda}{m+1}$ from the point $\{s=0\}$.

{\rm (v)} A domain in $\mathbb{R}^4=\{s,t,x_3,x_4\}\setminus\{s\leq0\}$ with the metric $g=ds^2+s^{\frac{2(m-1)}{3(m+1)}}dt^2+s^{\frac{4}{3}}\hat{g}$, where
$\hat{g}$ is the Euclidean metric on the $(x_3,x_4)-$plane. Here $\lambda=0$ and $w=Cs^{-\frac{2}{3(m+1)}}$.

{\rm (vi)}  A warped product with the metric of the form $g=ds^2+h(s)^2\bar{g}$ for a positive function $h$, where the Riemannian metric $\bar{g}$ has constant curvature. In
particular, $g$ is locally conformally flat.

\end{cor}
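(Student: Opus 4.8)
The plan is to obtain the corollary as a direct specialization of Theorem~\ref{mainthm} under the correspondence $\rho=0$, $w=e^{-f/m}$. The first step is to check that a $(\lambda,4+m)$-Einstein manifold is literally an $(m,0)$-quasi-Einstein manifold. Writing $w=e^{-f/m}$ one has $dw=-\frac{w}{m}df$, and taking one more covariant derivative gives
\be
\nabla dw=-\frac{1}{m}\big(dw\otimes df+w\,\nabla df\big)=-\frac{w}{m}\Big(\nabla df-\frac{1}{m}df\otimes df\Big).
\ee
Since $w>0$ on the interior, the defining equation $\nabla dw=\frac{w}{m}(Rc-\lambda g)$ is therefore equivalent to $Rc+\nabla df-\frac{1}{m}df\otimes df=\lambda g$, i.e. equation (\ref{rhodef}) with $\rho=0$. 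Hence $(M^4,g,f)$ is an $(m,0)$-quasi-Einstein manifold, still with harmonic Weyl curvature.

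The second step is to invoke Theorem~\ref{mainthm}. Its hypotheses hold: $m>1$ forces $m\notin\{\pm1,-2\}$, and $\rho=0\notin\{\frac14,\frac16\}$. So there is an open dense $U\subset M$ such that near each $p\in U$ the metric is one of the six local models (i)--(vi). It then remains to rewrite these models in the $w$-normalization. Because $\rho=0\neq\frac{m+1}{2(m+2)}$ (as $m>1$), Proposition~\ref{112result} gives $R=\frac{-2\lambda(m+2)}{4\rho-1+m(2\rho-1)}$; evaluated at $\rho=0$ this yields
\be
\frac{R}{2(m+2)}=\frac{\lambda}{m+1},\qquad \frac{R(m+1)}{2(m+2)}=\lambda.
\ee
Consequently the sign of $\frac{R}{2(m+2)}$ matches the sign of $\lambda$: in case (ii) one has $\lambda>0$ with spherical fiber $\mathbb{S}^2_\lambda$, while in cases (iii)--(iv) one has $\lambda<0$ with hyperbolic fiber $\mathbb{H}^2_\lambda$, and the base factors carry curvature $\frac{\lambda}{m+1}$.

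The last step is to convert the potential functions via $w=e^{-f/m}$. In case (ii), $f=-m\ln(\cos\sqrt{\frac{\lambda}{m+1}}s)+C$ gives $w=e^{-C/m}\cos\sqrt{\frac{\lambda}{m+1}}s$, which is $C\cos\sqrt{\frac{\lambda}{m+1}}s$ after renaming the positive constant; cases (iii) and (iv) become $w=-C\sinh\sqrt{\frac{-\lambda}{m+1}}s$ and $w=C\cosh\sqrt{\frac{-\lambda}{m+1}}s$ in the same way. For the singular model (v) we have $\lambda=0$ and $f=\frac{2m}{3(m+1)}\ln s+C$, hence $w=e^{-C/m}s^{-\frac{2}{3(m+1)}}$, and the constraint $m(m+1)>0$ from Theorem~\ref{mainthm} is automatic; the locally conformally flat warped product (vi) and the Einstein case (i), where $w$ is constant, transfer verbatim. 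I do not expect any analytic obstacle here, since everything is inherited from Theorem~\ref{mainthm}; the only care needed is the elementary bookkeeping that the additive constant $C$ in $f$ turns into a positive multiplicative constant in $w$ and that the sign of $\lambda$ correctly selects spherical versus hyperbolic fibers.
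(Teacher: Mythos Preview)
Your proposal is correct and follows exactly the paper's approach: specialize Theorem~\ref{mainthm} to $\rho=0$ via the substitution $w=e^{-f/m}$, then translate the constants and potential functions accordingly. The only minor point you leave implicit is the paper's remark that, since $w$ is allowed to vanish on $\partial M$ (equivalently $f\to\infty$), the $s$-intervals in (ii) and (iii) extend to include the endpoint; otherwise your bookkeeping matches the paper's brief derivation.
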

Note that since $w$ can be the zero on the boundary, $f$ is allowed to be the infinite. Thus the range of $s$ of (ii) and (iii) is different to $(m,\rho)$-quasi-Einstein. By applying this
local classification, we could classify complete $(\lambda,4+m)$-Einstein manifolds (See Corollary 1).

\bigskip
  In \cite{KK}, D.Kim and Y.Kim showed that there is a constant $\mu$ such that $\mu=w\Delta w+(m-1)|\nabla w|^2+\lambda w^2$.
This constant $\mu$ is the Ricci curvature of the fiber $F$ of the warped product Einstein manifold whose base is $M$. We can
compute $\mu$ for each case in Corollary 1
\begin{align*}
  \mu=\left\{
        \begin{array}{ll}
          \frac{m-1}{m+1}|\lambda|C^2>0, & \hbox{Case (ii),(iii);} \\
          \frac{m-1}{m+1}\lambda C^2<0, & \hbox{Case (iv);}\\
          0, & \hbox{Case (v).}
        \end{array}
      \right.
\end{align*}

   Now we can construct a warped product Einstein metric. If we construct a warped product Einstein metric $g_E$ over $M$ whose boundary is nonempty, then the fiber $F$ must be $\mathbb{S}^m$(\cite{HPW}). Thus if we construct the metric $g_{E}$ over case (ii) or (iii) in Corollary 1, we obtain
\begin{align*}
  g_E=&ds^2+\sin^2\Big(\sqrt{\frac{\lambda}{m+1}}s\Big)dt^2+\tilde{g}+\cos^2\Big(\sqrt{\frac{\lambda}{m+1}}s\Big)g_{\mathbb{S}^m}\\
  g_E=&ds^2+\cosh^2(\Big(\sqrt{\frac{-\lambda}{m+1}}s\Big)dt^2+\tilde{g}+\sinh^2\Big(\sqrt{\frac{-\lambda}{m+1}}s\Big)g_{\mathbb{S}^m}
\end{align*}

We can easily see that this is the product metric of $\mathbb{S}^{m+2}_{\frac{\lambda}{m+1}}\times \mathbb{S}^2_\lambda$ and $\mathbb{H}^{m+2}_{\frac{\lambda}{m+1}}\times \mathbb{H}^2_\lambda$ respectively. If we construct $g_E$ over case (iv) then fiber $F_1$ should have negative Ricci curvatures. And over (v), $F_2$ should be a Ricci-flat manifold.
Thus the metric $g_E$ is
\begin{align*}
  g_E=&ds^2+\sinh^2(\Big(\sqrt{\frac{-\lambda}{m+1}}s\Big)dt^2+\tilde{g}+\cosh^2\Big(\sqrt{\frac{-\lambda}{m+1}}s\Big)g_{F_1}\\
  g_E=&ds^2+s^{\frac{2(m-1)}{3(m+1)}}dt^2+s^{\frac{4}{3}}\hat{g}+s^{-\frac{4}{3(m+1)}}g_{F_2}
\end{align*}
where $Ric_{F_1}<0$ and $Ric_{F_2}=0$.

\bigskip
We finish our paper by stating a classification of  $(\lambda,4+m)$-Einstein manifolds under harmonic curvature.

\begin{cor}
 Let $(M^4,g,w)$ be a $(\lambda,4+m)$-Einstein manifold with harmonic curvature $m>1$. Then there exist an open dense subset $U$ of $M$ such that
for each point $p$ in $U$ there exists a neighborhood $V_p$  where $(V_p,g)$ is isometric to one of the following;
Here $C$ is an arbitrary positive constant.

  {\rm (i)} An Einstein metric ($w$ can be a non-constant function).

  {\rm (ii)} A domain in $\mathcal{B}_{\frac{\lambda}{m+1}}^2 \times\mathbb{S}_{\lambda}^2$ with $w=C\cos\sqrt{\frac{\lambda}{m+1}}s$ and $\lambda>0$.

 {\rm (iii)}A domain in $\mathcal{D}_{\frac{\lambda}{m+1}}^2 \times\mathbb{H}_{\lambda}^2$ with  $w=-C\sinh\sqrt{\frac{-\lambda}{m+1}}s$ and $\lambda<0$.

 {\rm (iv)} A domain in $\mathbb{H}_{\frac{\lambda}{m+1}}^2 \times\mathbb{H}_{\lambda}^2$ with $w=C\cosh\sqrt{\frac{-\lambda}{m+1}}s$ and $\lambda<0$.

 {\rm (v)} A domain in $\mathbb{I}\times M_{\frac{\lambda}{m+3}}$ where $M_{\frac{\lambda}{m+3}}$ is a 3-dimensional manifold with constant curvature $\frac{\lambda}{m+3}$, with $g$ and $w$ is one of the followings;
\begin{align*}
   \left\{
     \begin{array}{ll}
      \mathbb{I}=(0,\frac{\pi}{2}], g=ds^2+\sin^2\Big(\sqrt{\frac{\lambda}{m+3}}s\Big)\bar{g},
  w=C\cos\sqrt{\frac{\lambda}{m+3}}s), & \hbox{$\lambda>0$;} \\
      \mathbb{I}=(-\infty,0],g=ds^2+\cosh^2\Big(\sqrt{\frac{-\lambda}{m+3}}s\Big)\bar{g},
  w=-C\sinh\sqrt{\frac{-\lambda}{m+3}}s), & \hbox{$\lambda<0$;} \\
      \mathbb{I}=(-\infty,0), g=ds^2+\sinh^2\Big(\sqrt{\frac{-\lambda}{m+3}}s\Big)\bar{g},
 w=C\cosh\sqrt{\frac{-\lambda}{m+3}}s), & \hbox{$\lambda<0$;}
     \end{array}
   \right.
\end{align*}

where $\bar{g}$ has constant sectional curvature $k_{\bar{g}}=\frac{\lambda}{m+3}$.

\end{cor}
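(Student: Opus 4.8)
The plan is to obtain this corollary as the $\rho=0$ specialization of the harmonic-curvature classification for $(m,\rho)$-quasi-Einstein manifolds established just above (the one proved as Theorem 3), transcribed through the change of function $w=e^{-f/m}$. First I would record the equivalence of the two structures: putting $\rho=0$ in the defining equation (\ref{rhodef}) gives $Rc+\nabla df-\frac{1}{m}df\otimes df=\lambda g$, which is exactly (\ref{quasidef}) with $\mu=\frac1m$, and the substitution $w=e^{-f/m}$ turns this into the $(\lambda,4+m)$-Einstein equation (\ref{hpwdef}). Hence a $(\lambda,4+m)$-Einstein manifold is precisely a $(m,0)$-quasi-Einstein manifold and conversely. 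Since $\rho=0\notin\{\frac14,\frac16\}$ and $m>1$ forces $m\notin\{\pm1,-2\}$, the hypotheses of that theorem are met, and it yields, on an open dense $U$, the local models (i)--(v) written in terms of $(g,f)$.

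The substantive work is then the case-by-case transcription into $w$. For the product models I would set $\rho=0$ in the scalar-curvature identity $R=\frac{-2\lambda(m+2)}{4\rho-1+m(2\rho-1)}$ from Proposition \ref{112result} (valid because $\rho=0\neq\frac{m+1}{2(m+2)}$ when $m>1$), obtaining $R=\frac{2\lambda(m+2)}{m+1}$, whence $\frac{R}{2(m+2)}=\frac{\lambda}{m+1}$ and the fiber curvature $\frac{R(m+1)}{2(m+2)}=\lambda$. As $m+1>0$, the sign hypotheses $\frac{R}{2(m+2)}>0$ and $<0$ become $\lambda>0$ and $\lambda<0$, so the fiber $\mathbb{N}^2_\lambda$ is the sphere $\mathbb{S}^2_\lambda$ in case (ii) and the hyperbolic plane $\mathbb{H}^2_\lambda$ in cases (iii) and (iv). Applying $w=e^{-f/m}$ to $f=-m\ln(\cos\sqrt{\frac{R}{2(m+2)}}s)+C$ gives $w=C\cos\sqrt{\frac{\lambda}{m+1}}s$ (with the positive constant relabelled), and the identical computation yields $w=-C\sinh\sqrt{\frac{-\lambda}{m+1}}s$ and $w=C\cosh\sqrt{\frac{-\lambda}{m+1}}s$ in cases (iii) and (iv). For case (v) the relevant parameter $\frac{\rho R+\lambda}{m+3}$ collapses to $\frac{\lambda}{m+3}$, and its three subcases convert to the trigonometric and hyperbolic warping functions for $w$ stated in the corollary; the Einstein alternative (i) transcribes directly, with $w$ allowed to be non-constant as in Theorem 3(i).

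The one genuinely new point---and the only place where the statement is not a literal specialization---is the range of $s$. In the quasi-Einstein analysis everything takes place on $\{\nabla f\neq 0\}$, so $f'=|\nabla f|>0$ (Lemma \ref{threesolb}) forbids the endpoint at which the first warping factor degenerates. For a $(\lambda,4+m)$-Einstein manifold, however, $w$ is permitted to vanish on $\partial M$, and $w=0$ corresponds to $f=-m\ln w\to+\infty$, so the boundary point is admissible here. I would check in each case that $w$ reaches $0$ exactly at the endpoint where $\cos$ (resp. $\sinh$) vanishes, so that the $s$-interval closes up to the half-open interval $(0,\frac{\pi}{2}]$ in (ii) and $(-\infty,0]$ in (iii) as recorded, while the requirement $w>0$ on the interior pins down the sign of the constant and forces $C>0$. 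Collecting these transcriptions over $U$ produces the local models (i)--(v) and proves the corollary. The hardest aspect is thus not any new computation but the careful boundary bookkeeping of $w$ against $f$; everything else is a direct substitution into results already in hand.
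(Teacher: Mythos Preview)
Your proposal is correct and matches the paper's approach: the corollary is stated without a separate proof, being the $\rho=0$ specialization of Theorem~3 transcribed via $w=e^{-f/m}$, together with the boundary observation (made explicitly just before this corollary) that $w$ may vanish on $\partial M$, which closes up the $s$-intervals. Your computation of the curvature constants and your handling of the interval endpoints are exactly what is needed.
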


\end{document}